\newtheorem{dummy}{anything}[section] 
\newtheorem{theorem}[dummy]{Theorem}
\newtheorem*{thma}{Theorem A}
\newtheorem*{conj}{Conjecture}
\newtheorem*{question}{Question}
\newtheorem{lemma}[dummy]{Lemma} 
\newtheorem{proposition}[dummy]{Proposition} 
\theoremstyle{definition}%%Change Theoremstyle
\newtheorem{definition}[dummy]{Definition}
 \newtheorem{example}[dummy]{Example}
 \newtheorem{remark}[dummy]{Remark}
 \newtheorem*{rem}{Remark}
 \newtheorem*{acknowledgement}{Acknowledgement}
\newcommand{\cA}{\mathcal A}
\newcommand{\bF}{\mathbf F}
\newcommand{\bH}{\mathbf H}
\newcommand{\bZ}{\mathbf Z}
\newcommand{\bQ}{\mathbf Q}
\newcommand{\bbS}{\mathbb S}
\newcommand{\cy}[1]{\bZ/{#1}}
\newcommand{\vv}{\, | \,}
\newcommand{\trf}{tr{\hskip -1.8truept}f}
\newcommand{\ZG}{\bZ G}
\newcommand{\QG}{\bQ G}
\newcommand{\mmatrix}[4]{\left (\vcenter
{\xymatrix@C-2pc@R-2pc{#1&#2\\#3&#4} }
\right )}
\DeclareMathOperator{\Hom}{Hom}
\DeclareMathOperator{\wh}{Wh}
\DeclareMathOperator{\Mod}{mod}
\DeclareMathOperator{\rank}{rank}
\DeclareMathOperator{\Image}{Im}
\DeclareMathOperator{\Ind}{Ind}
\DeclareMathOperator{\Res}{Res}
\newcommand{\nr}{\medskip\noindent $\bullet$\ }
\DeclareMathOperator{\bop}{{\textstyle{\bigoplus}}}
  \newcommand{\wwH}{\Sigma}
 \newcommand{\wP}{\Gamma}
 \DeclareMathOperator{\Ext}{Ext}
  \DeclareMathOperator{\wExt}{{\widetilde{Ext}}}
  \DeclareMathOperator{\Aut}{Aut}
 \newcommand{\la}{\langle}
  \newcommand{\ra}{\rangle}
   \DeclareMathOperator{\tr}{tr}
   \DeclareMathOperator{\ch}{c}
   \newcommand{\bP}{\mathbf P}
   \newcommand{\CP}{\mathbf C \bP}
   \newcommand{\bnu}{\xi}
   \newcommand{\Bnu}[1]{\xi_{{#1}}}
   \newcommand{\Btau}[1]{\varpi_{{#1}}}
   \newcommand{\MS}[1]{M{#1}}
\begin{document}
\title[Free Actions of Finite Groups on $S^n \times S^n$]
{Free Actions of Finite Groups on $S^n \times S^n$}
%    Information for first author
\author{Ian Hambleton}
\address{Department of Mathematics \& Statistics
\newline\indent
McMaster University
\newline\indent
Hamilton, ON L8S 4K1, Canada}
\email{ian@math.mcmaster.ca}
%information for second author
\author{\"Ozg\"un \"Unl\"u}
\address{Department of Mathematics \& Statistics
\newline\indent
McMaster University
\newline\indent
Hamilton, ON L8S 4K1, Canada}
\email{unluo@math.mcmaster.ca}
\date{Jan.~14, 2009}
\thanks{\hskip -11pt  Research partially supported by NSERC Discovery Grant A4000. }

\begin{abstract} 
Let $p$ be an odd prime. We construct a  non-abelian extension  $\wP $ of $S^1$ by $\cy p \times \cy p$, and prove that any finite subgroup of $\wP $ acts freely and smoothly on $S^{2p-1} \times S^{2p-1}$. In particular, for each odd prime $p$ we obtain  free smooth actions of infinitely many
non-metacyclic rank two $p$-groups on $S^{2p-1} \times S^{2p-1}$. These results arise from a general approach to the existence problem for finite group actions on products of equidimensional spheres.
\end{abstract}
\maketitle

\section*{Introduction}

Conner \cite{conner1} and Heller \cite{heller1} proved that any finite group $G$ acting freely on a product of two spheres must have $\rank G \leqq 2$.  
In other words, the maximal rank of an elementary abelian subgroup of $G$ is at most two. 
However, if both spheres have the same dimension then there are additional restrictions: the alternating group $A_4$ of order $12$ has rank two, but does not admit such an action (see Oliver \cite{oliver1}). It was observed by Adem-Smith \cite[p.~423]{adem-smith}  that $A_4$ is a subgroup of every rank two simple group, so all these are ruled out.

\begin{question}
What group theoretic conditions characterize the rank two finite groups which can act freely and smoothly on $S^n \times S^n$, for some $n\geqq 1$~?
\end{question}

The work of G.~Lewis \cite{lewis2} shows that for every prime $p$, the $p$-Sylow subgroup of a finite group $G$ which acts freely on
$S^n \times S^n$ is abelian unless  $n = 2pr-1$ for some $r\geq 1$.
Lewis also points out \cite[p.~538]{lewis2} that
metacyclic groups  act freely and smoothly on some $S^n \times S^n$, 
but the existence of a free action by any other non-abelian $p$-group, for $p$ odd, has been a long-standing open question. In this paper we provide a general approach to this problem, and construct an infinite family of new examples for each odd prime in the minimal dimension.

\smallskip
For each odd prime $p$, let $\wP$ be the Lie group given by the following presentation
\begin{equation*}
\wP= \left\langle a,b,z\text{ }|\text{ } z \in S^1 \text{, }a^{p}=b^{p}=[a,z]=[b,z]=1\text{, }
[a,b]= \omega\right\rangle
\end{equation*} 
where $\omega = e^{2 \pi i/p } \in S^1 \subseteq \mathbb{C}$. This is a non-abelian central extension of $\cy p \times \cy p$ by $S^1$.
\begin{thma} Let $p$ be an odd prime, and let $G$  be a finite subgroup of $\wP$. 
Then  $G$ acts freely and smoothly on $S^{2p-1} \times S^{2p-1}$. 
\end{thma}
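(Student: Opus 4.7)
The strategy is to construct a single free smooth action of the compact Lie group $\wP$ on $S^{2p-1} \times S^{2p-1}$; since every finite subgroup $G \leq \wP$ is a closed Lie subgroup, such an action restricts to a free smooth $G$-action, yielding Theorem~A. The starting point is the Stone--von Neumann (Heisenberg) representation $\rho \colon \wP \to U(p)$, in which the central $S^1$ acts by scalar multiplication, $a$ and $b$ act by the cyclic shift $A$ and the diagonal matrix of $p$-th roots of unity $B$ respectively, and $[\rho(a),\rho(b)] = \omega \cdot I$.

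The resulting $\wP$-action on $X = S^{2p-1}$ is not free: by a trace calculation, the only elements with nonempty fixed set are the non-central elements of the extraspecial subgroup $\la a, b, \omega \ra$ of order $p^3$, and each such element fixes a single circle in $X$ (the unit vectors in a $1$-dimensional eigenspace). I would promote this to a free action on $S^{2p-1} \times S^{2p-1}$ by realizing the latter as the unit sphere bundle $S(E)$ of a $\wP$-equivariant complex rank-$p$ vector bundle $E \to X$ satisfying two conditions: (a)~$E$ is topologically trivial, so that $S(E) \cong X \times S^{2p-1} = S^{2p-1} \times S^{2p-1}$; and (b)~for every nontrivial $g \in \wP$ and every $x \in X^g$, the action of $g$ on the fiber $E_x$ has no nonzero fixed vector. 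Together (a) and (b) force the induced $\wP$-action on $S(E)$ to be free.

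To arrange (b), I would specify the local equivariant structure of $E$ along each fixed circle. At a generic point $v$ on the circle fixed by $g = a^i b^j u$, the stabilizer in $\wP$ is precisely the cyclic group $\la g \ra \cong \cy p$, so one chooses the fiber representation of this $\cy p$ to be fixed-point-free on $\bC^p$ (for instance the scalar character $\omega$, or more generally any rank-$p$ representation of $\cy p$ avoiding the trivial character). Combining these local choices with trivial equivariant data over the free locus yields $E$ via an equivariant clutching construction.

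The main obstacle is the compatibility of (a) and (b): realizing the prescribed local fiber representations at each of the singular circles while maintaining global triviality of the complex vector bundle $E$. The relevant classifying obstruction lies in $\pi_{2p-1}(BU(p)) \cong \pi_{2p-2}(U(p))$, and the technical heart of the argument is to show that the equivariant clutching data can be arranged so that this class vanishes. This is where the rank-two, non-abelian structure of the Heisenberg extension $\wP$ and the specific dimension $2p - 1$ play the decisive role; a straightforward linear representation argument demonstrably fails, since any pair of $p$-dimensional representations of $\wP$ has a common element (namely $a^i b^j u$ with $u \in \la\omega\ra$) on which $1$ is an eigenvalue in both factors.
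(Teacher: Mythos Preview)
Your strategy is entirely different from the paper's, and the proposal leaves the decisive step unproved.

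The paper does not construct a free $\wP$-action on $S^{2p-1}\times S^{2p-1}$ and then restrict. It works one finite $G\subset\wP$ at a time via Kreck's modified surgery. One fixes $\theta_1\oplus\theta_2=\zeta\oplus(\alpha^p-\alpha^{p-1}\beta+\beta^p)\in H^{2p}(B\wP;\bZ)^2$ and bundle data $\nu_\wP$, forms the normal $(2p-1)$-type $K(\bZ^2,2p-1)\to B_\wP\to B\wP$, and proves by James and Adams spectral sequence computations (comparing with the subgroups $\Sigma_t$ and $D_t$) that a nonempty set $T_\wP\subset H_{4p-3}(B_\wP;\bZ)$ of primitive classes lies in the image of the bordism Hurewicz map. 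The $S^1$-bundle transfer along $B_G\to B_\wP$ then yields a class $[M,f]\in\Omega_{4p-2}(B_G,\nu_G)$, and one surgers $M$ within its bordism class---using a splitting $\bH(\bZ)\perp(F,\lambda,\mu)$ of the equivariant intersection form together with the vanishing of multisignature and Arf invariant in $L'_{4p-2}(\bZ G)$---to obtain $\widetilde M\cong S^{2p-1}\times S^{2p-1}$.

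In your outline the stated obstruction is misplaced. The group $\pi_{2p-1}(BU(p))\cong\pi_{2p-2}(U(p))$ is \emph{zero} by Bott periodicity ($2p-2$ is even and $\leq 2p-1$), so every rank-$p$ complex bundle over $S^{2p-1}$ is already topologically trivial and condition~(a) is vacuous. The genuine difficulty is the \emph{existence} of a $\wP$-equivariant bundle $E$ with the prescribed isotropy, and your ``equivariant clutching construction'' does not address it. If you take ``trivial equivariant data over the free locus'' literally, then $\wP$ acts trivially on fibres there, whereas near a singular circle $C=X^{\langle g\rangle}$ your local model has $\langle g\rangle$ acting on the fibre by a fixed-point-free representation $\sigma$; over the link $\partial N(C)$ these two $\langle g\rangle$-equivariant structures descend to the trivial bundle and to a flat bundle with holonomy $\sigma$ over a lens space, and an equivariant gluing demands an equivariant isomorphism between them, which is not automatic. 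Allowing instead a nontrivial bundle over $X_{\mathrm{free}}/\wP$, one faces genuine extension obstructions in the equivariant (co)homology of the orbit stratification of $S(\rho)$, for which you give no argument. That this is not a routine matter is consistent with the paper's own remark that an explicit, non-surgical construction is known only for $p=3$.
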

The finite subgroups of $\wP$ which surject onto the quotient $\cy p \times \cy p$ are direct products $G = C \times P(k)$, where $C$ is a finite cyclic group of order prime to $p$, and 
$$
P(k)= \left\langle a,b,c \vv a^{p}=b^{p}=c^{p^{k-2}}=[a,c]=[b,c]=1\text{, }
[a,b]= c^{p^{k-3}} \right\rangle
$$
is a rank two $p$-group of order $p^{k}$, $k \geq 3$. We therefore obtain infinitely many actions of non-metacyclic $p$-groups on $S^{2p-1} \times S^{2p-1}$ for each prime $p$.

\smallskip
An important special case is the extraspecial $p$-group $G_p =P(3)$ of order $p^3$ and exponent $p$. 
Our existence result contradicts claims made in \cite{alzubaidy1}, \cite{alzubaidy2},   \cite{thomas1}, and \cite{yagita1} that $G_p$-actions do not exist (for cohomological reasons) on any product of equidimensional spheres. It was later shown by Benson and Carlson \cite{benson-carlson1} that 
such actions could not be ruled out for any prime $p$ by cohomological methods. Moreover for $p=3$, in \cite{h-unlu1}, we gave an explicit construction of a free smooth action of $\Gamma $ (and in particular $G_3$) on $S^5\times S^5$. This construction provides an alternate proof of Theorem A for $p=3$.

\smallskip
More generally, 
rank two finite $p$-groups were classified by Blackburn \cite{blackburn2} (see also \cite{leary3}).
Consider the additional family, extending the groups $P(k)$:
\begin{equation*}
B(k,\epsilon )= \left\langle a,b,c \vv a^{p}=b^{p}=c^{p^{k-2}}=[b,c]=1\text{, } [a,c]=b \text{, }
[a,b]= c^{\epsilon  p^{k-3}} \right\rangle
\end{equation*} 
where $k\geq 4$, and $\epsilon $ is $1$ or a quadratic non-residue $\Mod p$.
Here is Blackburn's list of the  rank two $p$-groups $G$ with order $p^k$, and $p>3$ (the classification for $p=3$ is more complicated):
\begin{enumerate}
\renewcommand{\labelenumi}{{\bf \Roman{enumi}})}
\item $G$ is a metacyclic $p$-group.
\item $G=P(k)$, for $k\geq 3$.
\item $G = B(k,\epsilon)$, for $k \geq 4$.
\end{enumerate}
 We now know that  groups of types I and II do act freely on a product of equidimensional spheres in the minimal dimension. Is this the complete answer~?
\begin{conj}
Let $p > 3$ be an odd prime. If $G$ is a rank two $p$-group $G$ which acts freely and smoothly on $S^{2pr-1} \times S^{2pr-1}$,  $r \geq 1$, then $G$ 
is metacyclic  or  $G$ is a subgroup of $\wP$.
 \end{conj}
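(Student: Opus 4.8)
The plan is to use Blackburn's classification to reduce the conjecture to a single non-existence statement, and then attack that statement cohomologically. For $p>3$ a rank two $p$-group is of type I (metacyclic), type II ($P(k)$, $k\geq 3$), or type III ($B(k,\epsilon)$, $k\geq 4$). Type I is handled by Lewis's construction and type II by Theorem A, since the only non-metacyclic finite $p$-subgroups of $\wP$ are the $P(k)$: a finite $p$-subgroup of $\wP$ either surjects onto $\cy p\times\cy p$, giving some $P(k)$, or maps into a rank one subgroup of $\cy p\times\cy p$, in which case it is a central extension of a cyclic group by a cyclic group, hence abelian. Thus the conjecture is equivalent to the assertion that, for every $k\geq 4$ and every $r\geq 1$, the group $B(k,\epsilon)$ does \emph{not} act freely and smoothly on $S^{2pr-1}\times S^{2pr-1}$.

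First I would record why the positive method of this paper cannot reach type III. Every finite subgroup of $\wP$ is nilpotent of class at most $2$, because $\wP$ is a central extension of $\cy p\times\cy p$ by $S^1$; but $B(k,\epsilon)$ has class at least $3$, since $[\,[a,c],a\,]=[b,a]=c^{-\epsilon p^{k-3}}\neq 1$ (here $c^{p^{k-3}}$ has order $p$ and $\epsilon$ is prime to $p$). So $B(k,\epsilon)$ embeds in no extension of $S^1$ by $\cy p\times\cy p$, and the geometric input of Theorem A genuinely does not apply. Restriction to subgroups gives no leverage either: the metacyclic and $P(j)$ subgroups of $B(k,\epsilon)$ all act freely, by Lewis's theorem and Theorem A, so a direct obstruction on $B(k,\epsilon)$ itself — or at least on the smallest case $B(4,\epsilon)$ of order $p^4$, to which an induction on $k$ might reduce — is needed.

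For the non-existence direction the natural move is the Borel construction. A free smooth action of $G=B(k,\epsilon)$ on $X\simeq S^n\times S^n$, $n=2pr-1$, produces a finite-dimensional space $X_{hG}$ fibering over $BG$ with fibre $X$; the associated spectral sequence yields a pair of classes in $H^{n+1}(BG;\bF_p)$ whose interaction with the Steenrod algebra action severely constrains $H^*(BG;\bF_p)$ — morally, that ring, modulo a suitable ideal generated by two classes of degree $2pr$, should have the Poincar\'e series and multiplicative structure of $H^*(S^n\times S^n)$. The concrete task would be to compute $H^*(B(k,\epsilon);\bF_p)$ as a module over the Steenrod algebra, paying attention to the low-degree generators and to the Bocksteins and reduced powers forced by the relation $[a,c]=b$, and then to show this structure is incompatible with the requirement just described.

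The hard part — and the reason this is only a conjecture — is that exactly this cohomological attack is known to fail one step below the type III family: Benson and Carlson proved that free actions of $G_p=P(3)$ on products of equidimensional spheres cannot be excluded by any cohomological method. One should therefore expect the mod-$p$ cohomology of $B(k,\epsilon)$, Steenrod action included, to support the formal structure of $S^n\times S^n$, so the genuine obstruction must be finer. My proposed endgame is to work at the chain level in the spirit of the ``general approach'' of this paper: a free action produces a finite free $\bZ G$-chain complex $C_*$ with $H_*(C_*)\cong H_*(S^n\times S^n)$, a prescribed first $k$-invariant, and a vanishing finiteness and surgery obstruction; one would then try to show that the relevant obstruction class — living in an $\Ext$ group, or in a Whitehead or $L$-group of $\bZ G$ — is nonzero for $B(k,\epsilon)$, with the class $\geq 3$ structure entering precisely where, for $P(k)$, the centrality of $c$ makes Theorem A succeed. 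Should no purely topological obstruction materialize, carrying out the same analysis in the smooth category (normal invariants, $\rho$-invariants) is the natural fallback, since the conjecture concerns \emph{smooth} actions.
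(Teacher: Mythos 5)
The statement you were asked about is not a theorem of the paper: it is explicitly labeled a \emph{Conjecture}, posed as an open problem, and the paper offers no proof of it. There is therefore nothing in the paper to compare your argument against, and your own write-up correctly treats the statement as open rather than as something you have established.

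Your preliminary reductions are accurate and worth keeping: by Blackburn's classification the conjecture is equivalent to the non-existence of free smooth $B(k,\epsilon)$-actions on $S^{2pr-1}\times S^{2pr-1}$; the non-metacyclic $p$-subgroups of $\wP$ surjecting onto $\cy p\times\cy p$ are exactly the $P(k)$ (since a central extension of $S^1$ by a cyclic group, or any subgroup thereof, is abelian); and $B(k,\epsilon)$ has nilpotency class $\geq 3$ while every subgroup of $\wP$ has class $\leq 2$, so the construction of Theorem A cannot be applied. You also correctly flag the Benson--Carlson theorem as the fundamental barrier to a purely cohomological obstruction. But what you then outline --- a Borel-construction Steenrod-algebra analysis, followed by a fallback to finiteness, Whitehead, or $L$-theoretic obstructions at the chain level --- is a research program, not an argument; no specific obstruction is identified, no computation is carried out, and no step is shown to yield a contradiction. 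In its present form the proposal does not prove the conjecture, and indeed you say so yourself; it should be understood as a plan of attack consistent with the authors' framing, not as a solution.
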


If this conjecture is true, then 
we would know all the possible $p$-Sylow subgroups ($p>3$) for finite groups acting freely on products of equidimensional spheres.
This would be an important step forward in understanding the general problem.

We remark that in order to handle groups of composite order, 
it is necessary to establish the existence of free actions of $p$-groups in higher dimensions $S^{2pr-1} \times S^{2pr-1}$, $r > 1$. 
In \cite{h-unlu1}, we discussed this existence problem specifically for $p=3$, $r=2$, and showed that all odd order subgroups of $SU(3)$, including the extraspecial $3$-group $G_3$ and the type III group $B(4, -1)$, can act freely and smoothly on $S^{11}\times S^{11}$. In particular, we are suggesting that existence results for  $p=3$ will be qualitatively different than those for $p>3$.

\medskip
We can expect an even more complicated structure for the $2$-Sylow subgroup of a finite group acting freely on some $S^n\times S^n$, since this is already the case for free actions on $S^n$. We can take products of periodic groups $G_1\times G_2$ and obtain a variety of actions of non-metacyclic groups  on $S^n\times S^n$ (see \cite{h4} for the existence of these examples, generalizing the results of Stein \cite{stein1}).
Here the $2$-groups are all metabelian, so one might hope that this is the correct restriction on the $2$-Sylow subgroup. 
However, there are non-metabelian $2$-groups which are subgroups of $Sp(2)$, hence by generalizing the notion of fixity in \cite{adem-davis-unlu} to quaternionic fixity, one can construct free actions of these non-metabelian $2$-groups on $S^7\times S^7$ (see \cite{unlu1}).

\begin{rem}
Every rank two finite $p$-group (for $p$ odd)  admits a free
smooth action on some product $S^n \times S^m$, $m \geqq n$ (see \cite{adem-davis-unlu} for $p>3$, \cite{unlu1} for $p=3$). The survey article by A.~Adem \cite{adem1} describes recent progress on the existence problem in this setting for general finite groups (see also \cite{jackson1}). In most cases, construction of the actions requires $m>n$.
\end{rem}

We will always assume that our actions on $S^n\times S^n$ are homologically trivial and $n$ is odd. For free actions of odd $p$-groups this follows from the Lefschetz fixed point theorem.

\begin{acknowledgement} The authors would like to thank Alejandro Adem, Dave Benson, Jim Davis and Matthias Kreck for useful conversations and correspondence.
\end{acknowledgement}

\section{An overview of the proof}
Given a group $G$, and two cohomology classes $\theta_1$, $\theta_2 \in H^{n+1}(G; \bZ)$, we can construct an associated space $B_G$ as the total space of the induced fibration
$$\xymatrix{
K(\bZ\oplus\bZ, n) \ar[r] & B_G \ar[d] &  \cr 
& BG \ar[r]^{\ \ \theta_1 , \theta_2\ \ \ \ \ \ \ \ \ \ \ } & K(\bZ\oplus\bZ, n+1)}$$
where $BG$ denotes the classifying space of $G$. If we also have a stable oriented bundle $\nu_G\colon B_G \to BSO$, then we can consider the bordism groups 
$$\Omega_{k}(B_G,\nu_G)$$
defined as in \cite[Chap.~II]{stong1}. The objects are commutative diagrams
$$\xymatrix{\nu_M \ar[r]^b\ar[d] &\nu_G\ar[d]\cr M \ar[r]^f & B_G}$$
where $M^{k}$ is a closed, smooth $k$-dimensional manifold with stable normal bundle $\nu_M$, $f\colon M \to B_G$ is a reference map, and $b\colon \nu_M \to \nu_G$ is a stable bundle map covering $f$. The bordism relation is the obvious one consistent with the normal data and reference maps. 

Our general strategy is to use the space
 $B_{G}$ as a model for the $n$-type of the orbit space of a possible free $G$-action on  $S^n \times S^n$, and $\nu_G$ as a candidate for its stable normal bundle. For $G$ finite, the actual orbit space (a closed $2n$-dimensional manifold) is obtained by surgery on a representative of a suitable bordism element in $\Omega_{2n}(B_G, \nu_G)$.
 This approach to the problem follows the general outline of Kreck's ``modified surgery" program (see \cite{kreck3}).

 We will carry out this strategy uniformly to prove Theorem A.  We may restrict our attention to the finite subgroups $G\subset\wP$ which surject onto $\cy p \times \cy p$.  Then, by construction, the induced map on classifying spaces gives a circle bundle
$$S^1 \to B_{G} \to B_{\wP}\ .$$
Let $n=2p-1$. We select appropriate data $(\theta_1, \theta_2, \nu_\wP)$ for $\wP$, and then define the data for each $G\subset \wP$ by restriction. We study $B_\wP$ and the bordism groups $\Omega_{2n-1}(B_\wP, \nu_\wP)$ to carry out the following steps.
\begin{enumerate}
\renewcommand{\labelenumi}{(\roman{enumi})}
\item We construct a non-empty subset $T_\wP \subseteq H_{2n-1}(B_\wP;\bZ)$, depending on the data   $( \theta_1, \theta_2, \nu_\wP)$, consisting entirely of primitive elements of infinite order.
\item For each $\gamma \in T_\wP$, we show that there is a bordism element $[N, c] \in \Omega_{2n-1}(B_\wP, \nu_\wP)$ whose image $c_*[N] = \gamma \in T_\wP$ under the Hurewicz map $ \Omega_{2n-1}(B_\wP, \nu_\wP) \to 
H_{2n}(B_\wP;\bZ)$. 
\end{enumerate}
One of the key points is that the cohomology of the groups $\wP$ is much simpler than that of its finite subgroups (see Leary \cite{leary1} for $\wP$, and Lewis \cite{lewis1} for the extra-special $p$-groups), so the computations of Steps (i) and (ii) are best done over $\wP$.

Now for each finite subgroup $G\subset \wP$ as above, define
$T_{G}$ as the image of $T_{\wP}$ under the $S^1$-bundle transfer
$$\trf\colon H_{2n-1}(B_{\wP};\bZ) \to H_{2n}(B_{G};\bZ)$$
induced by the fibration of classifying spaces. 
The subset $T_{G}$ will contain the images of fundamental classes of the possible free $G$-actions on $S^n \times S^n$. For each $\gamma_G = \trf(\gamma) \in T_G$, we have a bordism element $[M, f] \in \Omega_{2n}(B_G, \nu_G)$, where $M$ is the total space of the pulled-back $S^1$-bundle over $[N, c]$  with $c_*[N] = \gamma$. We then show that we can obtain $\widetilde M = S^n \times S^n$ by surgery on $[M,f]$ within its bordism class.

\section{Representations and cohomology of $\wP$}\label{three}

\subsection{Some subgroups of $\wP$} 
For each odd prime $p$, the Lie group $\wP$ is a central extension
$$ 1 \to S^1 \to \wP \to Q_p\to 1$$
where $Q_p = \cy p \times \cy p$. We fix the presentation for $\wP$ given in the Introduction, with generators $\la a, b\ra$ for $Q_p$. For any finite subgroup $G \subset \wP$ which surjects onto $Q_p$, we have a commutative diagram of central extensions
$$\xymatrix{1 \ar[r]& \la c\ra\ar[r]\ar[d]&G\ar[r]\ar[d]& Q_p\ar[r]\ar@{=}[d]& 1\cr
1 \ar[r]& S^1\ar[r]&\wP\ar[r]& Q_p\ar[r]& 1
}$$
where the centre $Z(G) = \la c \ra\subset G$ is a finite cyclic group.
Now we list some subgroups of $\wP$ which will be important in our calculations.
\begin{definition}\mbox{}
Let $d_t =ab^{t}$ if $0 \leq t \leq p-1$, $d_p = b$, and define $D_t = \la d_t\ra$, for $0\leq t \leq p$. Let $\wwH_{t}=\la d_{t},S ^{1}\ra$ denote the subgroup of $\wP$ generated by $d_{t}$ and $S ^{1}$ for $0 \leq t\leq p$.
\end{definition}
We will usually write $\wwH$ instead of $\wwH_{p}$ for the subgroup of $\wP$ generated by $b$ and $S ^{1}$. 
\begin{remark}\label{rem: lifting automorphisms} Since the subgroup $S^1\subset \wP$ is central, any continuous group automorphism
$\phi \in \Aut(\wP)$ induces an automorphism $\bar\phi \in \Aut(Q_p) = GL_2(p)$. In Section \ref{sec: bordism element}, we will use the fact that the image of $\Aut(\wP)$ contains the subgroup  $ SL_2(p)\subset GL_2(p)$.
More explicitly, for each matrix $A \in SL_2(p)$, we can define an automorphism $\phi_A \in \Aut(\wP)$ such that $\bar{\phi}_A= A$ as follows. Any element in $\wP $ can be written
as $a^rb^sz$ for unique $r,s\in \cy p$ and $z\in
S^1$. Given  a matrix 
$A=\left(\vcenter{%
\xymatrix@C-25pt@R-25pt{
  r_{11} & r_{12} \\
  r_{21} & r_{22}}
}\right)$ in $SL_2(p)$ we can define $\phi
_A(a)=a^{r_{11}}b^{r_{12}}$, $\phi _A(b)=a^{r_{21}}b^{r_{22}}$
and $\phi _A(z)=z$. By construction, $\bar{\phi}_A = A$. 
\end{remark}

\subsection{Representations of $\wP$ and some of its subgroups} First, we
define a $1$--dimensional representation 
$$\Phi _{t}\colon\wP\to U(1), \quad \text{for\ }0 \leq t\leq p, $$
 so that 
 $\ker\Phi _{t}=\wwH_{p-t}$ and  $\Phi _{t}(d_{t})=e^{2\pi i/p}$. For any subgroup $G\subset  \wP$ and
$0 \leq t \leq p$ we define a $1$-dimensional
representation of $G$ by the formula:
\begin{equation*}
\Phi _{t,G}=\Res_{G}^{\wP}\left( \Phi _{t} \right) \colon G\to U(1), \quad 0 \leq t \leq p \ .
\end{equation*} 
Second, we define a $1$-dimensional representation $\Phi _{t}'$ of $\wwH_{t}$  by setting:
\begin{equation*}
\Phi _{t}'\colon\wwH_{t}\to U(1), \quad 0 \leq t \leq p,
\end{equation*}
where $ \Phi _{t}'(d_{t})=1$ and $\Phi _{t}'(z)=z$ for $z$ in $S ^{1}$.  For any subgroup 
$ G \subset \wwH_{t}$  we define 
a $1$-dimensional
representation of $G$ by the formula:
\begin{equation*}
\Phi _{t,G}'=\Res_{G}^{\wwH_{t}}\left( \Phi _{t}' \right) \colon G\to U(1),
\quad 0 \leq t \leq p\ .
\end{equation*} 
Finally, we define a $p$--dimensional irreducible representation $\Psi $ of $\wP$ as follows:
\begin{equation*}
\Psi =\Ind_{\wwH}^{\wP}(\Phi'_p)\colon\wP \to SU(p)
\end{equation*}
and for  any subgroup $G \subset
\wP$ we define:
 \begin{equation*}
\Psi _{G}=\Res_{G}^{\wP}\left( \Psi \right) \colon G\to
SU(p)
\end{equation*}
by restriction, as a $p$-dimensional
representation of $G$.

\subsection{Cohomology of $\wP$ and some of its subgroups} 
We will use the notations and results of Leary \cite{ leary1} for the integral
cohomology ring of $\wP$. 
\begin{theorem}[{\cite[Theorem 2]{ leary1}}]
\label{TheoremofLeary} $H^{\ast }(B\wP;\bZ)$ is generated by
elements $\alpha ,\beta,\sigma _{1},\chi _{2}$, $\dots$, $\chi _{p-1}$, $\zeta $, with
\begin{equation*}
\deg(\alpha )=\deg(\beta )=2\text{, \ }\deg(\zeta )=2p\text{, \ \  }
\deg(\chi _{i})=2i,
\end{equation*}
subject to some relations.
\end{theorem}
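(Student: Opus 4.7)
The plan is to compute $H^*(B\wP;\bZ)$ by applying the Serre spectral sequence to the central $S^1$-fibration
$$BS^1 \to B\wP \to BQ_p$$
induced by the extension $1 \to S^1 \to \wP \to Q_p \to 1$. Since $S^1$ is central, the local system is trivial and the $E_2$-page is $H^*(BQ_p;\bZ) \otimes H^*(BS^1;\bZ)$. The cohomology $H^*(BQ_p;\bZ)$ is well-known, generated by the two degree-$2$ Chern classes $\alpha,\beta$ of the characters of $Q_p = \cy p \times \cy p$ (with $p\alpha = p\beta = 0$), together with $p$-torsion classes in odd degrees coming from the Tor part of the Künneth formula.

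First I would identify the transgression $d_2(x)$, where $x$ is the polynomial generator of $H^*(BS^1;\bZ)$. This class equals the Euler class of the $S^1$-bundle, equivalently the image of the extension class in $H^2(Q_p;\bZ)$. The relation $[a,b] = e^{2\pi i/p}$ in the presentation of $\wP$ forces this extension class to be the nontrivial ``symplectic'' class on $Q_p$, which will be (up to sign) what the theorem calls $\sigma_1$. Consequently $x$ itself cannot survive to $E_\infty$, but $x^p$ is $d_2$-closed, and one checks by a multiplicativity argument that no higher differential hits it, producing the degree-$2p$ generator $\zeta$ as a lift of $x^p$.

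Next I would construct the remaining generators directly using representations from Section \ref{three}. The $p$-dimensional irreducible representation $\Psi = \Ind_{\wwH}^{\wP}(\Phi'_p)$ gives Chern classes $c_i(\Psi) \in H^{2i}(B\wP;\bZ)$, and I would set $\chi_i := c_i(\Psi)$ for $2 \leq i \leq p-1$. The classes $\alpha,\beta$ arise from the $1$-dimensional representations $\Phi_0,\Phi_p$ restricted through $\wP \to Q_p$. Together with $\sigma_1$ and $\zeta$, one then argues these classes generate the entire ring, for example by induction on degree using the spectral sequence column structure.

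The main obstacle is pinning down the complete multiplicative relations. The difficulty is twofold: higher differentials past $d_2$ must be controlled (using multiplicativity and Steenrod operation compatibility), and one has to identify relations such as $p\alpha = p\beta = 0$, relations among the $\chi_i$, and the relations forced by multiplication by $\sigma_1$ killing certain classes on the $E_\infty$-page. The strategy I would follow is Leary's detection method: use the restriction maps to the maximal subgroups $\wwH_t$ (whose cohomology is tractable because they are extensions of cyclic groups by $S^1$) together with the central torus $S^1 \subset \wP$, and verify that the proposed generators detect all classes through these restrictions, so that the natural surjection from the presented ring is in fact an isomorphism. The Chern class identities for $\Psi = \Ind_\wwH^\wP(\Phi'_p)$ (for instance via the splitting principle applied after restriction to each $\wwH_t$) then yield the asserted relations.
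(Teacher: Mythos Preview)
The paper does not give its own proof of this statement: it is quoted verbatim as a result of Leary \cite[Theorem~2]{leary1} and used as a black box throughout. So there is no ``paper's proof'' to compare your proposal against.

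That said, your outline is broadly the right shape for how Leary's argument actually goes, with one slip worth flagging. In the Serre spectral sequence for $BS^1 \to B\wP \to BQ_p$, the polynomial generator $x$ of $H^*(BS^1;\bZ)$ sits in degree $2$, so its transgression is $d_3\colon E_3^{0,2}\to E_3^{3,0}$, not $d_2$; correspondingly the Euler/extension class of the central $S^1$-extension lives in $H^3(Q_p;\bZ)$ (via $H^2(Q_p;S^1)\cong H^3(Q_p;\bZ)$), not $H^2(Q_p;\bZ)$. This is consistent with $\sigma_1$ being an odd-degree class. Apart from that indexing error, your plan --- compute $d_3(x)$, lift $x^p$ to produce $\zeta$, realize $\chi_i$ as Chern classes of the induced representation $\Psi$, and detect everything via restriction to the subgroups $\wwH_t$ and $S^1$ --- is essentially Leary's method. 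For the purposes of the present paper, however, you should simply cite the result rather than reprove it.
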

\noindent In the statement of Theorem \ref{TheoremofLeary}, the elements $\alpha =\Phi _{0}\colon
\wP\to U(1)$ and $\beta =\Phi _{p}\colon\wP \to U(1)$, by considering $H^{2}(B\wP;\bZ  
)=\Hom(\wP,S ^{1})$,  and $\zeta $ is the $p^{th}$ Chern
class of the $p$--dimensional irreducible representation $\Psi $ of $
\wP$. The mod $p$ cohomology ring of $\wP$ is also
given by Leary:
\begin{theorem}[{\cite[Theorem 2]{leary2}}]
\label{TheoremofLearymodp} $H^{\ast }(B\wP;\bZ /p)$ is generated by elements 
$y$, $y^{\prime }$, $x$, $x^{\prime}$, $c_{2},c_{3},\dots ,c_{p-1},z$, with
\begin{eqnarray*}
\deg(y) &=&\deg(y^{\prime })=1\text{, }\deg(x)=\deg(x^{\prime })=2\text{,} \\
\text{ }\deg(z) &=&2p\text{, \ \ and \ \ \  }\deg(c_{i})=2i,
\end{eqnarray*} 
subject to some relations.
\end{theorem}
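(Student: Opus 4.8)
The plan is to use the defining central extension $1\to S^1\to\wP\to Q_p\to 1$, which presents $B\wP$ as the total space of a fibration $BS^1\to B\wP\to BQ_p$ with $BS^1=\CP^\infty=K(\bZ,2)$, and to run the mod $p$ Serre spectral sequence. Since $S^1$ is central the coefficient system is trivial, and since $H^*(BS^1;\bZ/p)=\bZ/p[u]$ with $\deg u=2$ is concentrated in even degrees,
$$
E_2 = H^*(BQ_p;\bZ/p)\otimes\bZ/p[u] = \Lambda_{\bZ/p}(y,y')\otimes\bZ/p[x,x']\otimes\bZ/p[u],\qquad \beta y=x,\ \beta y'=x',
$$
where $y,y'$ generate $H^1(B\wP;\bZ/p)=\Hom(Q_p,\bZ/p)$ and $x,x'$ are the mod $p$ reductions of the Euler classes of the characters $\alpha=\Phi_0$, $\beta=\Phi_p$. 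All four are permanent cycles, being pulled back from $BQ_p$.

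The first step is to identify the transgression $d_3(u)\in H^3(BQ_p;\bZ/p)$: it is the mod $p$ reduction of the extension class, which lies in $H^2(Q_p;S^1)\cong H^3(BQ_p;\bZ)\cong\cy p$. Since $\wP$ is non-abelian this class generates, and the reduction map on $H^3$ is injective, so $d_3(u)$ is, up to a unit, the nonzero ``symplectic'' class $\beta(yy')=xy'-x'y$ --- the one restricting to $0$ on every cyclic subgroup, in accordance with the splittings $\wwH_t\cong S^1\times\cy p$. One checks $(xy'-x'y)^2=0$, so that $d_3^2=0$; and since $d_3$ is the $H^*(BQ_p;\bZ/p)$-linear derivation with $d_3(u^m)=m\,u^{m-1}(xy'-x'y)$, the class $u^p$ already survives $d_3$, the $u^0$-column of $E_4$ becomes $H^*(BQ_p;\bZ/p)/(xy'-x'y)$ (so the relation $xy'=x'y$ holds in $H^*(B\wP;\bZ/p)$), and in each column $u^m$ with $p\nmid m$ only the class $yy'\,u^m$ survives, except that for $m\equiv-1\bmod p$ one also keeps the $\bZ/p[x,x']$-spans of $yy'\,u^m$ and $(xy'-x'y)u^m$.

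The second step is to bring in honest cohomology classes of $B\wP$ to name the remaining generators and to constrain the higher differentials. The $p$-dimensional irreducible $\Psi$ restricts on the fibre $S^1$ to $p$ copies of the standard character, so $c_i(\Psi)|_{BS^1}=\binom{p}{i}u^i$: this is $0$ for $2\le i\le p-1$ and $u^p$ for $i=p$. Hence $z$, the mod $p$ reduction of $\zeta=c_p(\Psi)$, detects $u^p$ --- so $u^p$ is genuinely a permanent cycle --- while $c_2,\dots,c_{p-1}$, the reductions of $c_i(\Psi)$, lie in filtration $\ge2$ with leading terms a unit times $yy'\,u^{i-1}\in E_\infty^{2,2(i-1)}$. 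With these permanent cycles in hand one shows that the classes lying above $yy'\,u^{p-2}$ in the columns $u^m$ with $m\equiv-1\bmod p$ must be killed by the only available higher differentials, that $E_\infty$ is exactly the subring generated by the images of $y,y',x,x',z,c_2,\dots,c_{p-1}$, and one reads off the relations from those of $H^*(BQ_p;\bZ/p)$, the boundary relation $xy'=x'y$, and the products of the $c_i$ and $z$.

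The hard part is that the spectral sequence does \emph{not} degenerate at $E_3$, or even $E_4$: a band of higher differentials (beginning around $d_{2p-1}$) truncates the classes above $yy'\,u^{p-2}$ in the columns $u^m$ with $m\equiv-1\pmod p$, and these must be computed exactly, not merely bounded, before the $E_\infty$ generators can be named; on top of this one must resolve the multiplicative extension problems separating $\operatorname{gr}H^*(B\wP;\bZ/p)$ from the actual ring. A good cross-check on the final ring is the computation of $H^*(BG_p;\bZ/p)$ by Lewis \cite{lewis1} via the other fibration $BG_p\to B\wP\to BS^1$ coming from $\wP/G_p\cong S^1$, together with the integral answer of Theorem~\ref{TheoremofLeary}.
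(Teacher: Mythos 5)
The paper does not prove this statement at all: it is quoted verbatim from Leary \cite{leary2} and used as a black box, so there is no in-paper argument to compare yours against. What you have written is a plausible outline of how such a computation would go, and it is consistent with the standard route (and with the parallel integral computation behind Theorem~\ref{TheoremofLeary}): run the mod~$p$ Serre spectral sequence of the circle bundle $BS^1\to B\wP\to BQ_p$. Your identification of the $E_2$-page, of $d_3(u)$ as (a unit times) the mod~$p$ reduction of the extension class --- the ``symplectic'' element $xy'-x'y$, forced by the fact that the extension splits over every cyclic $D_t\subset Q_p$ --- and of $u^p$ as a permanent cycle detected by $z=c_p(\Psi)\bmod p$ are all correct, as is your description of the $E_4$-page in the columns $u^m$ with $p\nmid m$.

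That said, you have honestly flagged but not closed the two real gaps, so this is a strategy outline rather than a proof. (1)~The claim that each $c_i$ ($2\le i\le p-1$) has filtration exactly $2$ with leading term a unit times $yy'\,u^{i-1}$ in $E_\infty^{2,2(i-1)}$ is asserted, not shown; to justify it one has to actually compute the Chern classes of $\Psi=\Ind_{\wwH}^{\wP}(\Phi'_p)$ (say by the splitting principle or a Chern-class-of-induction formula) and see that they are nonzero in that associated-graded slot. (2)~The determination of the higher differentials (the band you locate around $d_{2p-1}$) and the resolution of the multiplicative extension problems from $\operatorname{gr}H^*(B\wP;\bZ/p)$ to the actual ring are not carried out at all; without them your argument only bounds $E_\infty$ from above and does not pin down the ring. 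You correctly call this ``the hard part,'' and it is precisely where the content of Leary's theorem lies. If you want a self-contained verification rather than a citation, these two points would have to be supplied, for instance by cross-checking against Lewis's direct computation \cite{lewis1} for the extraspecial quotient, as you suggest.
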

\noindent Let $\pi _{\ast }$ stand for the projection map from 
$H^{\ast }(B\wP; \bZ )$ to $H^{\ast }(B\wP; \bZ /p)$, and $\delta _{p}$ for the Bockstein from $H^{\ast
}(B\wP; \bZ /p)$ to $H^{\ast +1}(B\wP; \bZ )$ then $\delta_{p}(y)=\alpha$, $\delta_{p}(y^{\prime })=\beta $, $\pi _{\ast }(\alpha )=x$, $\pi _{\ast }(\beta )=x^{\prime }$, $\pi _{\ast }(\chi _{i})=c_{i}$, and $\pi _{\ast }(\zeta )=z$. Here are some facts about the cohomology of certain subgroups.
\begin{remark} \label{CohomologyOfSubgroups}Considering $H^{2}(BG,\bZ )=\Hom(G,S ^{1})$
\begin{enumerate}
\item $H^{*}(BS^1;\bZ )=\bZ [\tau ]$ where $\tau =\Phi _{t,S^1}' $. So $\tau $ is the identity map on $S^1$.
\item $H^{*}(B\wwH_{t};\bZ )=\bZ[\tau ',v '\vv pv'=0 ]$ where $\tau '=\Phi _{t}'$ and $v '=\Phi _{t,\wwH_{t}}$
\item $H^{*}(B\wwH_{t}, \bZ /p)=\bF_p[\bar{\tau }] \otimes ( \Lambda (u) \otimes \bF_p[v] )$ where $\bar{\tau }$ and $v$ are mod $p$ reductions of 
$\tau '$ and $v'$ respectively and $\beta(u)=v$.
\end{enumerate}
\end{remark}

We calculate some restriction maps:
$$\Res^{\wP }_{\wwH _t}(\alpha )=
\begin{cases} 
v'           & \textup{if\ \ }  0 \leq t \leq p-1, \cr
0           & \textup{if\ \ } t=p\ .
\end{cases}
\text{\ \ \ and \ \ \ }
\Res^{\wP }_{\wwH _t}(\beta )=
\begin{cases} 
tv'           & \textup{if\ \ }  0 \leq t \leq p-1, \cr
v'           & \textup{if\ \ } t=p\ .
\end{cases}
$$
The property
$$\Res^{\wP }_{\wwH _t}(\alpha ^{p}-\alpha ^{p-1}\beta +\beta ^{p})=(v')^{p}, $$
for $0 \leq t \leq p$, shows that this element is a good candidate for a $k$-invariant.

\section{The $(2p-1)$-type $B_{\wP}$ and the bundle data}

We now construct the space $B_{\wP}$ needed as a model for the $(2p-1)$-type of the quotient space of our action. Then we construct a 
bundle $\nu_{\wP}$ over this space $B_{\wP}$ which will pullback to the normal bundle of the quotient space of this action.

\subsection{Definition of $B_{\wP}$ } \label{TheDefitionOfB_G}
We fix the element
\begin{equation*}
k =\theta_{1} \oplus \theta_{2} =\zeta \oplus \left( \alpha ^{p}-\alpha ^{p-1}\beta +\beta ^{p}\right) \in H^{2p}(\wP; 
\bZ  )\oplus H^{2p}(\wP;\bZ  )\ .
\end{equation*} 
  For  any
subgroup  $G \subset\wP$ define
\begin{equation*}
k_{G}=\Res_{G}^{\wP}(k ) \in  
H^{2p}(G;\bZ\oplus \bZ),
\end{equation*} 
and define $\pi_{G}$ as the fibration classified by $k _{G}$:
$$\xymatrix@R-5pt@C-7pt{K(\bZ\oplus\bZ, 2p-1) \ar[r] & B_G \ar[d]^{\pi_G} &{\hphantom{xxxxxxxxxxxxx}}\cr &BG\ar[r]^(0.3){k_G} & 
 K(\bZ\oplus\bZ, 2p)\ .
}$$
Note that the natural map $BG \to B\wP$, induced by the inclusion, gives a diagram
$$\xymatrix{
B_G\ar[r]\ar[d]^{\pi_G}& B_{\wP}\ar[d]^{\pi_{\wP}}\cr
BG \ar[r]& B\wP
}$$
which is a pull-back square.

\subsection{The bundle data  over  $B_{\wP}$} 
For any subgroup $G\subseteq \wP$ we will define two bundles $\Btau{G}$ and $\Bnu{G}$ over $BG$, which will pull back by the classifying map to the stable tangent and normal bundle  respectively of the quotient of a possible $G$-action on $S^n \times S^n$. The pullbacks of these bundles over $BG$ to bundles over $B_{G}$ will be denoted by $\tau_G$ and $\nu_G$ respectively.

\medskip
\noindent
\textbf{(1) Tangent bundles:}
We have the representations $\Psi_G\colon G \to SU(p)$ and $\Phi_{t, G}\colon G \to U(1)$. Let  $\psi _{G}$ denote the $p$--dimensional complex vector bundle classified by 
$$\psi _{G}=B\Psi _{G}\colon BG \to BSU(p),$$
and let $\phi _{t,G}$ denote the complex line bundle classified by 
$$\phi _{t,G}=B\Phi _{t,G}\colon BG\to BU(1) = BS^1\ . $$
We define a $3p$--dimensional complex vector bundle $\Btau{G}$ on $BG$ by the  Whitney sum
\begin{equation*}
\Btau{G}= \psi _{G} \oplus \phi _{0,G}^{\oplus p} \oplus \phi _{p,G}^{\oplus p}
\end{equation*}
and use the same notation for the stable vector bundle $\Btau{G}\colon BG \to BSO$. 
We now identify our candidate $\tau_G$ for the stable tangent bundle.
\begin{definition}\label{bundlechoices_tangent}
Let $\tau _{G}$ denote the stable vector bundle  on $B_{G}$ classified by the
composition
 $$\tau_G\colon B_G \xrightarrow{\pi _{G}} BG \xrightarrow{\Btau{G}} BSO\ .$$
\end{definition}

\medskip
\noindent
\textbf{(2) Normal bundles:}
First we show that there is an stable inverse of the vector bundle $\Btau{G}$ over $BG$, when restricted to a finite skeleton of $BG$.
\begin{lemma}
For any subgroup $G \subseteq \wP$, there exists a stable bundle $\Bnu{G}\colon BG \to BSO$, such that $\Bnu{G} \oplus \Btau{G}= \varepsilon$, the trivial bundle, when restricted to the $(4p-1)$-skeleton of $BG$.
\end{lemma}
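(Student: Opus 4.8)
The plan is to show that the stable bundle $\Btau{G}$ over $BG$ has a stable inverse over any finite skeleton. The obstruction to inverting a stable vector bundle $\xi$ over a finite complex $X$ lives in the reduced $K$-theory $\widetilde{KO}^0(X)$: once $X$ is finite, $\widetilde{KO}^0(X)$ is a finite group (since $BG$ has torsion homology in positive degrees and all cohomology with $\bZ$-coefficients is torsion for a finite group $G$), so every element is invertible and the class $[\Btau{G}] - \dim$ has an additive inverse $[\Bnu{G}]$. Representing that inverse $K$-theory class by an actual stable bundle $\Bnu{G}\colon BG \to BSO$ (defined on the whole of $BG$, or at least on the relevant skeleton) gives $\Bnu{G} \oplus \Btau{G} = \varepsilon$ after restriction to the $(4p-1)$-skeleton.

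Concretely I would proceed as follows. First, restrict attention to the $(4p-1)$-skeleton $Y = (BG)^{(4p-1)}$, a finite CW complex. Second, invoke the fact that $\widetilde{KO}^0(Y)$ is a finite abelian group: this follows because $H^*(BG;\bZ)$ is all torsion in positive degrees (as $G$ is finite), hence $H^*(Y;\bZ)$ is finite, and the Atiyah--Hirzebruch spectral sequence for $\widetilde{KO}^0(Y)$ then has finite $E_2$-page in the relevant range, so $\widetilde{KO}^0(Y)$ is finite. Third, since $[\Btau{G}|_Y] - 3p \in \widetilde{KO}^0(Y)$ lies in a finite group, it has finite additive order, so some multiple $-[\Btau{G}|_Y] + 3p = (N-1)([\Btau{G}|_Y] - 3p) + \text{(const)}$... more cleanly: there is a class $\eta \in \widetilde{KO}^0(Y)$ with $\eta + ([\Btau{G}|_Y] - 3p) = 0$. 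Fourth, represent $\eta + (\text{large trivial summand})$ by a genuine bundle of some dimension; since the dimension of $Y$ is $4p-1$, any stable class of sufficiently large rank is represented by an honest vector bundle, and by stability the Whitney sum $\Bnu{G} \oplus \Btau{G}$ is then stably trivial over $Y$, i.e.\ equals $\varepsilon$ in $\widetilde{KO}^0(Y)$; enlarging the trivial summand makes it actually isomorphic to a trivial bundle over the finite complex $Y$. Finally, extend the classifying map $Y \to BSO$ over all of $BG$ (obstructions vanish since $BSO$ can be replaced by an $\Omega$-spectrum representative, or simply keep $\Bnu{G}$ defined on $Y$ — the lemma only asserts the identity after restriction to the $(4p-1)$-skeleton anyway).

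The main obstacle — really the only nontrivial point — is establishing finiteness of $\widetilde{KO}^0$ of the skeleton, or equivalently controlling the torsion; everything else is standard bundle theory over finite complexes. One should be slightly careful that $\widetilde{KO}$, not just $\widetilde{KU}$, is finite: this is fine because the rationalization of $KO^*(Y)$ is a direct summand of that of $KU^*(Y)$, which vanishes in reduced form since $H^*(Y;\bQ) = 0$ in positive degrees. An alternative, perhaps cleaner, route avoiding $KO$-theory altogether: note that $\Btau{G}$ is a sum of complex bundles, work in $\widetilde{KU}^0(Y)$ which is finite for the same reason, find a complex stable inverse there, and then forget down to $BSO$; this is arguably the more natural framing given that $\Btau{G}$ was defined as a complex vector bundle. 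I would present the proof in this complex form: let $\Bnu{G}$ be a complex vector bundle representing $-[\Btau{G}]$ in reduced complex $K$-theory of the $(4p-1)$-skeleton (possible since that group is finite), of rank large enough that the stably trivial sum $\Bnu{G} \oplus \Btau{G}$ is genuinely trivial, then regard $\Bnu{G}$ as a real (oriented) bundle via the forgetful map $BU \to BSO$.
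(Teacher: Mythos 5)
There is a genuine gap: your argument assumes $G$ is a \emph{finite} group, but the lemma is stated for an arbitrary subgroup $G \subseteq \wP$, and it is used in the paper precisely for the \emph{infinite} subgroups $\wP$, $\wwH_t$ and $S^1$ (e.g.\ $\Bnu{\wwH_t}$ and $\Bnu{\wP}$ appear throughout). For $G = \wP$ the group $H^*(B\wP;\bZ)$ contains classes of infinite order ($\alpha$, $\beta$, $\zeta$, \dots), so $\widetilde{KO}^0\bigl((B\wP)^{(4p-1)}\bigr)$ and $\widetilde{KU}^0\bigl((B\wP)^{(4p-1)}\bigr)$ are \emph{not} finite, and both the primary route and the ``cleaner'' complex variant you propose collapse. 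Moreover, the finiteness of $K$-theory is a red herring even in the finite case: every element of any abelian group already has an additive inverse, so ``finiteness $\Rightarrow$ invertibility'' is not a meaningful step. What one actually needs is the much more elementary fact that a vector bundle over a finite CW complex is always a direct summand of a trivial bundle (this has nothing to do with the $K$-group being finite), which immediately supplies a complement $\Bnu{}$ to $\Btau{}$ over the $(4p-1)$-skeleton. This is precisely the fact the paper invokes, applied to $B\wP^{(4p-1)}$.

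A second issue: the lemma asserts $\Bnu{G}$ is a bundle on \emph{all} of $BG$, with the Whitney sum trivial only after restriction to the skeleton, so your fallback ``keep $\Bnu{G}$ defined on $Y$'' does not establish the statement. The paper handles this by extending the classifying map $B\wP^{(4p-1)} \to BU$ to $B\wP$ via obstruction theory (using $\pi_*(BU)$ concentrated in even degrees and the structure of $H^*(B\wP;\bZ)$), and then \emph{defines} $\Bnu{G}$ for every subgroup $G \subseteq \wP$ as the pullback under $BG \to B\wP \to BU \to BSO$. Doing the construction once over $\wP$ and restricting also guarantees that the various bundles $\Bnu{G}$ are compatible across subgroups, which is implicitly used later. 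Your skeleton-by-skeleton, group-by-group construction would not automatically produce compatible choices.
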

\begin{proof}
Take $N=4p-1$ and let $\Btau{\wP} | _{B\wP^{(N)}} $ denote the pull-back of $ \Btau{\wP} $ to $B\wP^{(N)}$, the $N$--th skeleton of $B\wP$, by the inclusion 
map of $B\wP^{(N)}$ in $B\wP$. Then there exists a vector bundle $\Bnu{\wP} $ over $B _{\wP}^{(N)}$ such that the bundle $\Bnu{\wP}  \oplus (\Btau{\wP} | _{B\wP^{(N)}} ) $ is trivial over  
$B\wP^{(N)}$, since $B\wP^{(N)}$ is a finite CW--complex. Stably this vector bundle   is classified by a map $\Bnu{\wP} \colon B\wP^{(N)} \to BU$ and there is no 
obstruction to extending this classifying map to a map $B\wP \to BU$, as the obstructions to doing so lie in the cohomology groups 
$$ H^{*+1}(B\wP,B\wP^{(N)};\pi _{*}(BU))=0 \ .$$ 
We will use the same notation $\Bnu{\wP}$ to denote the stable vector bundle classified by any extension map  $B\wP\to BU\to BSO$.
We  then define $$\Bnu{G}\colon BG \to BSO$$ by composition with the map $BG \to B\wP$ induced by $G\subseteq \wP$.
\end{proof}

We now identify our candidate $\nu_G$ for the stable normal bundle.
\begin{definition}\label{bundlechoices}
Let $\nu_G$ denote the stable vector bundle  on $B_{G}$ classified by the
composition
$$ \nu_G\colon B_G \xrightarrow{\pi _{G}}  BG \xrightarrow{\Bnu{G}}  BSO\ .$$
\end{definition}

\subsection{Characteristic classes}
We will now calculate some  characteristic classes for the bundles $\Btau{\wwH_t}$ and $\Bnu{\wwH_t}$ over $B\wwH_t$. The total Chern class of a bundle $\xi$ will be denoted $c(\xi)$. See \cite[p.~228]{milnor-stasheff} for the definition of the mod $p$ Wu classes 
$q_{k}(\bnu )\in H^{2(p-1)k}(B;\bZ/p)$. 
\begin{lemma}\label{chernclassoverBH} The total Chern class of $\Btau{\wwH_t}$ is
$$\ch (\Btau{\wwH_t}) = \ch (\psi _{\wwH_t}) \ch (\phi _{0,\wwH_t}^{\oplus p}\oplus \phi _{p,\wwH_t}^{\oplus p} )$$
where
\begin{enumerate}
\item $\ch (\psi _{\wwH_t}) = 1 - (v')^{p-1} + ((\tau')^p - (v')^{p-1}\tau')$
\item $\ch (\phi _{0,\wwH_t}^{\oplus p}\oplus \phi _{p,\wwH_t}^{\oplus p} ) =  1 + (1+t)(v')^p + t(v')^{2p}$
\end{enumerate}
\end{lemma}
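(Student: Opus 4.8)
The first displayed identity is just the Whitney sum formula applied to the definition $\Btau{\wwH_t}= \psi _{\wwH_t} \oplus \phi _{0,\wwH_t}^{\oplus p} \oplus \phi _{p,\wwH_t}^{\oplus p}$, so the whole content is the evaluation of the two factors, which I would carry out inside $H^{*}(B\wwH_t;\bZ)=\bZ[\tau',v'\vv pv'=0]$ from Remark \ref{CohomologyOfSubgroups}(2). Two elementary arithmetic facts drive every simplification below: since $pv'=0$, each binomial term $\binom{p}{k}(v')^{k}$ with $1\le k\le p-1$ vanishes (because $p\mid\binom{p}{k}$ and $(v')^{k}=(v')^{k-1}v'$); and by Fermat's little theorem $c^{p}(v')^{p}=c\,(v')^{p}$ for every $c\in\bZ$.

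For the line--bundle factor: since $H^{2}(B\wwH_t;\bZ)=\Hom(\wwH_t,S^{1})$ and $\phi_{0,\wwH_t},\phi_{p,\wwH_t}$ are classified by $\Res^{\wP}_{\wwH_t}(\alpha)$ and $\Res^{\wP}_{\wwH_t}(\beta)$, their first Chern classes are read off from the restriction maps at the end of Section \ref{three}: for $0\le t\le p-1$ these are $v'$ and $tv'$, while for $t=p$ they are $0$ and $v'$. Hence $\ch(\phi_{0,\wwH_t}^{\oplus p})=(1+c_{1}(\phi_{0,\wwH_t}))^{p}$ collapses, by the two facts above, to $1+(v')^{p}$ (resp.\ $1$ when $t=p$) and $\ch(\phi_{p,\wwH_t}^{\oplus p})=1+t(v')^{p}$ (resp.\ $1+(v')^{p}$). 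Multiplying these and absorbing the $t=p$ case through $pv'=0$ yields $1+(1+t)(v')^{p}+t(v')^{2p}$ uniformly for $0\le t\le p$, which is (2).

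For the factor $\psi_{\wwH_t}=\Res^{\wP}_{\wwH_t}\big(\Ind_{\wwH}^{\wP}(\Phi'_{p})\big)$ I would use the Mackey double--coset formula. Since $\wwH=\wwH_{p}=\la b,S^{1}\ra$ is normal in $\wP$ with $\wP/\wwH\cong\cy p$ generated by the image of $a$, and for $0\le t\le p-1$ the two lines $\la\bar d_{t}\ra$ and $\la\bar b\ra$ in $Q_{p}$ are distinct so that $\wwH_t\cap\wwH=S^{1}$, there is a single double coset and $\psi_{\wwH_t}\cong\Ind_{S^{1}}^{\wwH_t}(\Res_{S^{1}}\Phi'_{p})$; because $\wwH_t\cong\cy p\times S^{1}$ this splits as a sum of the $p$ complex line bundles with first Chern classes $\tau'+kv'$, $k\in\cy p$. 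For $t=p$ the same formula gives instead $\psi_{\wwH_p}=\bigoplus_{j=0}^{p-1}{}^{a^{j}}\Phi'_{p}$, whose summands have first Chern classes $\tau'-jv'$, and reindexing $j\mapsto -j$ returns the same list. In every case the splitting principle gives $\ch(\psi_{\wwH_t})=\prod_{k=0}^{p-1}(1+\tau'+kv')$, and I would finish with the polynomial identity $\prod_{k=0}^{p-1}(S+kv')=S^{p}-(v')^{p-1}S$ in $\bZ[\tau',v',S]/(pv')$ — its $S^{p-j}$ coefficient is $(v')^{j}e_{j}(0,1,\dots,p-1)$, killed by $pv'=0$ for $1\le j\le p-2$, equal to $(v')^{p-1}(p-1)!\equiv -(v')^{p-1}$ for $j=p-1$ by Wilson's theorem, and $0$ for $j=p$ — evaluated at $S=1+\tau'$; this gives $\ch(\psi_{\wwH_t})=(1+\tau')^{p}-(v')^{p-1}(1+\tau')$, which reduces to the asserted $1-(v')^{p-1}+\big((\tau')^{p}-(v')^{p-1}\tau'\big)$ once the $p$--divisible middle terms $\binom{p}{k}(\tau')^{k}$ are cleared (they vanish mod $p$, the coefficients used for the subsequent Wu classes $q_k$).

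The Whitney formula and the $\phi$--factor are routine. The real work, and the step I expect to be the main obstacle, is the $\psi$--factor: one must identify the Mackey decomposition of $\Res^{\wP}_{\wwH_t}\Ind_{\wwH}^{\wP}(\Phi'_{p})$ correctly and track the $p$ Chern roots exactly as $\tau'+kv'$, $k\in\cy p$ (equivalently, keep the normalization of $\Psi$ straight), after which the collapse of $\prod_{k}(1+\tau'+kv')$ is a short symmetric--function computation resting on Wilson's and Fermat's theorems together with the relation $pv'=0$.
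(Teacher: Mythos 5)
Your proposal uses the same decomposition of $\Psi_{\wwH_t}$ that drives the paper's proof, but derives it honestly via the Mackey double coset formula where the paper merely says ``it is easy to see that'' $\Psi_{\wwH_t}=\bigoplus_{k=0}^{p-1}\Phi_{t,\wwH_t}^{k}\Phi'_{t}$; your case split (one double coset $\wwH_t\backslash\wP/\wwH$ with $\wwH_t\cap\wwH=S^1$ for $t<p$, and the $p$ conjugates ${}^{a^j}\Phi'_p$ for $t=p$ which reindex to the same list) correctly reproduces that decomposition. The symmetric--function evaluation of $\prod_{k=0}^{p-1}(S+kv')$ via $\prod(X-k)\equiv X^{p-1}-1$ and Wilson's theorem is a clean formalization of the paper's one--line ``$=1-(v')^{p-1}+(\tau')^p-(v')^{p-1}\tau'$ since $pv'=0$,'' and the line--bundle factor is identical.

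The one substantive thing you did beyond the paper is to notice that the product actually equals $(1+\tau')^{p}-(v')^{p-1}(1+\tau')$ in $H^*(B\wwH_t;\bZ)=\bZ[\tau',v']/(pv')$, which differs from the displayed formula by the terms $\binom{p}{k}(\tau')^{k}$, $1\leq k\leq p-1$. You are right that these do not vanish integrally: $\tau'=c_1(\Phi'_t)$ is the Chern class of the $S^1$--factor of $\wwH_t\cong \cy p\times S^1$ and is torsion--free, so $p\tau'\neq 0$. (Relatedly, $\det\Psi_{\wwH_t}=(\Phi'_t)^p$ is nontrivial, so $c_1(\psi_{\wwH_t})=p\tau'\neq 0$.) Hence the displayed identity in the Lemma is only correct after reduction mod $p$, which is all that is used downstream: Lemmas \ref{chernclassesofbnu}--\ref{sphericalclass} only feed the mod-$p$ Pontrjagin classes into the Wu class $q_1(\Bnu{\wwH_t})$. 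Your parenthetical ``they vanish mod $p$, the coefficients used for the subsequent Wu classes $q_k$'' is exactly the right justification; I would only suggest stating it more forcefully as a correction rather than a ``reduction,'' since there is no operation that integrally clears $p$--divisible but nonzero classes. With that caveat made explicit, your proof is correct and in fact tighter than the paper's.
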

\begin{proof} 
Given two $1$-dimensional representation $\varPhi\colon G \to S ^1$ and $\varPhi ' \colon G \to S ^1$ and a natural number $k$, we will write $\varPhi ^k(g)=(\varPhi (g))^k$ and $(\varPhi  \varPhi ') (g)=\varPhi (g)\varPhi'(g)$.
 It is easy to see that 
$$\Psi _{\wwH_t}=\Phi _{t}'\oplus \Phi _{t,\wwH_t}\Phi _{t}'\oplus \Phi _{t,\wwH_t}^{2}\Phi _{t}' \oplus \dots \oplus \Phi _{t,\wwH_t}^{p-1}\Phi _{t}'\ .$$
Hence the total Chern class of $\psi _{\wwH_t}$ is
$$(1+\tau')(1+v'+\tau')(1+2v'+\tau')\dots (1+(p-1)v'+\tau')=1 - (v')^{p-1} + (\tau')^p - (v')^{p-1}\tau'$$
since $pv'=0$.

We have  $\ch (\phi _{0,\wwH_t}^{\oplus p})=(1+v')^{p}$ when $0 \leq t \leq p-1$ ($1$ when $t=p$), and  $\ch (\phi _{p,\wwH_t}^{\oplus p})=(1+tv')^{p} $ when $0 \leq t \leq p-1$ (but $(1+v')^p$ when $t=p$). Hence the total Chern class of $\phi _{0,\wwH_t}^{\oplus p}\oplus \phi _{p,\wwH_t}^{\oplus p}$ is equal to
$$(1+v')^{p}(1+tv')^{p}=(1+(v')^{p})(1+(tv')^p)=(1 + (1+t)(v')^p + t(v')^{2p})$$
when $0 \leq t \leq p-1$ and it is equal to 
$$(1+v')^p=(1+(v')^{p})=(1 + (1+t)(v')^p + t(v')^{2p})$$
when $t=p$.
\end{proof}

Now we will calculate the total Chern class of the bundle over $B\wwH_t$ that pulls backs to the normal bundle.

\begin{lemma}\label{chernclassesofbnu}
The total Chern class of $\Bnu{\wwH_t}$ is
$$\ch (\Bnu{\wwH_t}) = 1 + (v')^{p-1} + \text{\ higher terms \ }  $$
\end{lemma}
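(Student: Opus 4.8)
The plan is to obtain $\ch(\Bnu{\wwH_t})$ directly from Lemma~\ref{chernclassoverBH} and the defining relation $\Bnu{\wwH_t} \oplus \Btau{\wwH_t} = \varepsilon$ on the $(4p-1)$-skeleton. Since a stable inverse has inverse total Chern class (in the appropriate truncated sense), I would compute $\ch(\Bnu{\wwH_t}) = \ch(\Btau{\wwH_t})^{-1}$ modulo terms of degree exceeding $4p-1$. By Lemma~\ref{chernclassoverBH}, $\ch(\Btau{\wwH_t}) = \bigl(1 - (v')^{p-1} + (\tau')^p - (v')^{p-1}\tau'\bigr)\bigl(1 + (1+t)(v')^p + t(v')^{2p}\bigr)$, so it suffices to invert each factor and multiply. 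The second factor is a unit whose inverse is $1 - (1+t)(v')^p + \text{higher}$; the first factor has the form $1 + \eta$ with $\eta = -(v')^{p-1} + (\tau')^p - (v')^{p-1}\tau'$ supported in degrees $\geq 2(p-1)$, so $(1+\eta)^{-1} = 1 - \eta + \eta^2 - \cdots$, and the lowest-degree correction is $-\eta = (v')^{p-1} - (\tau')^p + (v')^{p-1}\tau'$, whose bottom term in degree $2(p-1)$ is exactly $(v')^{p-1}$.

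First I would record the structure of $H^*(B\wwH_t;\bZ)$ from Remark~\ref{CohomologyOfSubgroups}(2), namely $\bZ[\tau', v' \mid pv' = 0]$, so that I can keep track of which monomials can appear and in which degrees; in particular $(v')^{p-1}$ sits in degree $2(p-1) = 2p-2$, which is the first positive degree in which a nontrivial class can appear beyond degree $2$ contributions. Then I would perform the formal inversion as above, collecting the degree $2p-2$ term, which yields the claimed leading term $1 + (v')^{p-1}$, and absorb everything in strictly higher degree (including all the $(\tau')^p$, $(v')^p$, cross terms and their products) into the ``higher terms'' summand. A small point to check is that the truncation at the $(4p-1)$-skeleton is harmless: the statement only asserts the coefficient through degree $2p-2$, which is well below $4p-1$ for $p \geq 2$, so the stable inverse is genuinely defined in that range and the formula is unambiguous.

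The only mild subtlety—rather than a real obstacle—is bookkeeping the $\bZ$ versus $\bZ/p$ coefficients: the class $v'$ is $p$-torsion while $\tau'$ is not, so one must be slightly careful that the geometric series $1 - \eta + \eta^2 - \cdots$ terminates appropriately and that no denominators appear; since $\eta$ is a polynomial in $\tau', v'$ with integer coefficients and we only invert a unipotent element, this is automatic. I expect the genuinely routine part to be expanding the product and verifying no other degree $2p-2$ term survives, and I do not anticipate any essential difficulty; the lemma is essentially a formal consequence of Lemma~\ref{chernclassoverBH} together with multiplicativity of total Chern classes under Whitney sum and the existence of the stable inverse $\Bnu{\wwH_t}$ established in the preceding lemma.
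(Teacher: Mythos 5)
Your proposal is correct and follows essentially the same route as the paper: the paper's proof simply records that $\ch(\Btau{\wwH_t}) = 1 - (v')^{p-1} + \text{higher terms}$ from Lemma~\ref{chernclassoverBH} and invokes the inverse relation $\Bnu{\wwH_t}\oplus\Btau{\wwH_t}=\varepsilon$ on the $(4p-1)$-skeleton to conclude. You carry out the formal inversion more explicitly, but it is the same argument.
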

\begin{proof}
By Lemma \ref{chernclassoverBH} we know that the total Chern class of $\Btau{\wwH_t}$ is
$$\ch ( \Btau{\wwH_t}) = 1 - (v')^{p-1} +\text{\ higher terms \ } $$ 
By the construction of $\Bnu{\wwH_t}$, we know that $\Bnu{\wwH_t}\oplus \Btau{\wwH_t}$ is a trivial bundle over $B\wwH_t^{(4p-1)}$, and the result follows.
\end{proof}

For the rest of this section set $r=\frac{p-1}{2}$.
\begin{lemma} \label{pontrjaginclassesofbnu} The first few Pontrjagin classes of the bundle $\Bnu{\wwH_t}$ are as follows 
$$p_{k}(\Bnu{ \wwH_t})=
\begin{cases} 
1           & \textup{if\ \ } k=0,\cr
0           & \textup{if\ \ } 0< k < r,\cr
(-1)^{r}2(v')^{p-1}  & \textup{if\ \ } k = r\ .
\end{cases}$$
\end{lemma}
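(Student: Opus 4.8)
The plan is to deduce the Pontrjagin classes of $\Bnu{\wwH_t}$ from the Chern classes of $\Bnu{\wwH_t}$ computed in Lemma \ref{chernclassesofbnu}, using the standard relation between total Pontrjagin class and total Chern class for a complex bundle viewed as a real bundle. First I would record that for a complex bundle $\xi$ one has
$$1 - p_1(\xi) + p_2(\xi) - \dots = c(\xi)\cdot c(\bar\xi) = \Big(\sum_i c_i(\xi)\Big)\Big(\sum_i (-1)^i c_i(\xi)\Big),$$
so that $p_k(\xi) = (-1)^k\big(c_k(\xi)^2 - 2c_{k-1}(\xi)c_{k+1}(\xi) + \dots \pm 2 c_0(\xi)c_{2k}(\xi)\big)$, all taken in $H^*(B\wwH_t;\bZ)$ and then reduced mod $p$ since the only torsion class appearing, $v'$, has order $p$. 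From Lemma \ref{chernclassesofbnu} we have $c(\Bnu{\wwH_t}) = 1 + (v')^{p-1} + (\text{terms of degree} > 2(p-1))$, i.e. $c_i = 0$ for $1 \le i \le p-2$, $c_{p-1} = (v')^{p-1}$, with $c_i$ for $i > p-1$ unknown but irrelevant in the low degrees we need.

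Next I would compute degree by degree. Since $r = (p-1)/2$, the class $p_k(\Bnu{\wwH_t})$ lives in $H^{4k}(B\wwH_t;\bZ/p)$, and $p_r$ lives in degree $2(p-1)$, exactly where $(v')^{p-1}$ sits. For $0 < k < r$: in the formula for $p_k$ every term is a product $c_i c_j$ with $i + j = 2k < p-1$ and $i,j \ge 0$; since $c_i = 0$ for $1 \le i \le p-2$, any such product with $i,j \ge 1$ vanishes, and the terms $c_0 c_{2k} = c_{2k}$ vanish because $2k < p-1$. Hence $p_k(\Bnu{\wwH_t}) = 0$ for $0 < k < r$. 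For $k = r$: now $2k = p-1$, so the relevant terms in $(-1)^r p_r$ are $c_i c_j$ with $i+j = p-1$; the only nonzero contributions come from $c_0 c_{p-1}$ and $c_{p-1} c_0$, contributing with sign $+2 c_0 c_{p-1} = 2(v')^{p-1}$ (the middle term $c_r^2$ would require $r \le p-2$, which holds, but $c_r = 0$ since $1 \le r \le p-2$ when $p \ge 3$, so it contributes nothing; likewise all intermediate $c_i c_j$ with $1 \le i \le j \le p-2$ vanish). Therefore $p_r(\Bnu{\wwH_t}) = (-1)^r \cdot 2(v')^{p-1}$, and of course $p_0 = 1$.

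The main point to be careful about — the closest thing to an obstacle here — is the sign and coefficient bookkeeping in passing from $c(\xi)c(\bar\xi)$ to the Pontrjagin classes, together with checking that no torsion or integrality subtlety intervenes: one must note that $2$ is invertible mod $p$ (as $p$ is odd) so the coefficient $2$ is genuinely nonzero, and that reducing mod $p$ is harmless because the computation of Lemma \ref{chernclassesofbnu} already takes place in the ring $\bZ[\tau',v' \mid pv' = 0]$, with $(v')^{p-1}$ a nonzero class of order $p$ by Remark \ref{CohomologyOfSubgroups}. One should also verify that $r \leq p-2$ and $r < p-1$ so that the vanishing ranges above are nonempty and the class $(v')^{p-1}$ is not prematurely forced to be a square of something; this is immediate since $p \geq 3$. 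With these checks in place the three cases follow directly, completing the proof.
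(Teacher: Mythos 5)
Your proposal is correct and follows the same route as the paper: read off the Pontrjagin classes from the Chern classes of Lemma~\ref{chernclassesofbnu} via the identity $1 - p_1 + p_2 - \cdots = c(\xi)c(\bar\xi)$, observing that the vanishing of $c_1,\dots,c_{p-2}$ forces $p_k=0$ for $0<k<r$ and leaves only the $c_0c_{p-1}$ cross-term in degree $2(p-1)$. One small slip: the displayed general formula $p_k = (-1)^k\bigl(c_k^2 - 2c_{k-1}c_{k+1} + \cdots\bigr)$ has a spurious leading $(-1)^k$ — the correct relation (the one in the paper's proof) is $p_k = c_k^2 - 2c_{k-1}c_{k+1} + \cdots \pm 2c_0c_{2k}$ — but since you actually carry out the $k=r$ case by computing the degree-$4r$ term of $c(\xi)c(\bar\xi)$ directly and then dividing by $(-1)^r$, the final value $p_r = (-1)^r\,2(v')^{p-1}$ comes out right.
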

\begin{proof}
This is direct calculation given Lemma \ref{chernclassesofbnu} and the fact that
$$p_{k}(\Bnu{\wwH_t})= \ch _{k}(\Bnu{\wwH_t})^2-2\ch _{k-1}(\Bnu{\wwH_t})\ch _{k+1}(\Bnu{\wwH_t})+-\dots \mp 2\ch _{1}(\Bnu{\wwH_t})\ch _{2k-1}(\Bnu{\wwH_t}) \pm 2 \ch _{2k}(\Bnu{\wwH_t})\ .\qedhere$$
\end{proof}

The main result of this section is the following:

\begin{lemma}\label{sphericalclass}
$q_{1}(\Bnu{\wwH_t})=v^{p-1}\in H^{2(p-1)}(B\wwH_t;\bZ/p)$ 
\end{lemma}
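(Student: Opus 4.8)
The plan is to read off $q_{1}(\Bnu{\wwH_t})$ directly from the definition of the mod $p$ Wu classes, using the Pontrjagin classes already recorded in Lemma~\ref{pontrjaginclassesofbnu}. Recall from \cite[p.~228]{milnor-stasheff} that if the total Pontrjagin class of a stable bundle $\bnu$ is written formally as $\prod_{j}(1+t_{j})$ with $\deg t_{j}=4$, then the total mod $p$ Wu class is $\prod_{j}\bigl(1+t_{j}^{(p-1)/2}\bigr)$ reduced mod $p$; in particular $q_{1}(\bnu)$ is the $r$-th power sum $\sum_{j}t_{j}^{\,r}$ of the Pontrjagin roots (here $r=(p-1)/2$, so that $4r=2(p-1)$ is the correct degree). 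Newton's identities then express this power sum as a universal polynomial in $p_{1}(\bnu),\dots,p_{r}(\bnu)$: every monomial has weight $r$ (where $p_{k}$ has weight $k$), so apart from the single term $(-1)^{r-1}r\,p_{r}(\bnu)$ all monomials are products of at least two of the classes $p_{1}(\bnu),\dots,p_{r-1}(\bnu)$.

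I would then specialize to $\bnu=\Bnu{\wwH_t}$. By Lemma~\ref{pontrjaginclassesofbnu} we have $p_{k}(\Bnu{\wwH_t})=0$ for $0<k<r$, so every decomposable correction term in the Newton expression dies, leaving $q_{1}(\Bnu{\wwH_t})=(-1)^{r-1}r\,p_{r}(\Bnu{\wwH_t})$ after reduction mod $p$. Substituting $p_{r}(\Bnu{\wwH_t})=(-1)^{r}2(v')^{p-1}$ from the same lemma gives $q_{1}(\Bnu{\wwH_t})=-2r\,(v')^{p-1}=-(p-1)(v')^{p-1}$ in $H^{2(p-1)}(B\wwH_t;\bZ/p)$. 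Since $-(p-1)\equiv 1\pmod p$ and $v$ is the mod $p$ reduction of $v'$ by Remark~\ref{CohomologyOfSubgroups}(3), this equals $v^{p-1}$, which is the assertion.

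The genuinely delicate point — and the one I expect to be the main obstacle — is getting the normalization of $q_{1}$ exactly right: one must be sure, from the defining formula in \cite{milnor-stasheff}, both that the coefficient of $p_{r}$ in the Newton expansion of the $r$-th power sum is precisely $(-1)^{r-1}r$ and that no lower Pontrjagin class can enter undivided in total degree $2(p-1)$, so that the resulting scalar in $(\bZ/p)^{\times}$ is exactly $1$ rather than some other unit. Everything else (the Newton bookkeeping and the congruence $2r\equiv -1\pmod p$) is routine.
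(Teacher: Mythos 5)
Your proof is correct and follows essentially the same path as the paper's: both identify $q_1$ via Wu's theorem with the degree-$r$ term of the multiplicative sequence for $f(t)=1+t^r$ evaluated on the Pontrjagin classes, use the vanishing of $p_1,\dots,p_{r-1}$ from Lemma~\ref{pontrjaginclassesofbnu} to reduce to the coefficient $s_r(0,\dots,0,1)=(-1)^{r+1}r$ of $p_r$, and then compute $(-1)^{r+1}r\cdot(-1)^r 2 v^{p-1}=v^{p-1}$ mod $p$. You spell out the Newton-identity bookkeeping that the paper delegates to Milnor--Stasheff Problem~19-B, but the argument is the same.
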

\begin{proof}
Let $\{K_{n}\}$ be the multiplicative sequence belonging to the polynomial $f(t)=1+t^r$. A result of Wu shows (see Theorem 19.7 in \cite{milnor-stasheff}) that $$q_{1}(\Bnu{\wwH_t})=K_{r}(p_{1}(\Bnu{\wwH_t}),\dots,p_{r}(\Bnu{\wwH_t}))\Mod p \ .$$ By Lemma \ref{pontrjaginclassesofbnu} we know that $p_{1}(\Bnu{\wwH_t}),\dots,p_{r-1}(\Bnu{\wwH_t})$ are all zero, hence we are only interested in the coefficient of $x_r$ in the polynomial $K_r(x_1,\dots,x_r)$. By Problem 19-B in \cite{milnor-stasheff} this coefficient is equal to $s_r(0,0,\dots,0,1)=(-1)^{r+1}r$ (see \cite[p.~188]{milnor-stasheff})
Hence we have
$$q_{1}(\Bnu{\wwH_t})=(-1)^{r+1}r\bar{p}_{r}(\Bnu{\wwH_t})=(-1)^{r+1}r(-1)^{r}2v^{p-1}=(-1)(p-1)v^{p-1}=v^{p-1}$$
where $\bar{p}_{r}(\Bnu{\wwH_t})$ denotes the mod $p$ reduction of $p_{r}(\Bnu{\wwH_t})$.
\end{proof}

\section{Smooth models $M_t$ and $N_t$}

Here we construct free smooth actions of the subgroups $\wwH_t$ and $D_t$ on $S^{2p-1} \times S^{2p-1}$ and $S^{4p-3}$ respectively,  with the right bundle data. These provide models for covering spaces of the actions we are trying to construct.

\subsection{Construction of the examples $M_t$ and $N_t$}\label{mainexamples}

Given an $m$--dimensional representation $\varPhi \colon G\to U(m)$ of a
group $G$, we have an induced $G$--action on $\mathbb{C}^{m}$, and the space $ 
S(\varPhi ) = S^{2m-1}$ will be the $G$--equivariant unit sphere in $\mathbb{C}^{m}$. We now construct two main examples.
\begin{enumerate}
\item
For $G = \wwH_t$ and $t\in \{0,\dots,p\}$, define 
\begin{equation*}
M_{t}=(S(\Psi _{\wwH_{t}})\times S((\Phi _{t,\wwH_{t}})^{\oplus p})/\wwH_t = (S^{2p-1} \times S^{2p-1})/\wwH_t
\end{equation*}

\item For $G=D_t$ and $t\in \{0,\dots,p\}$, define 
\begin{equation*}
N_t=S(\Phi _{t,D_{t}}\oplus  \Phi _{t,D_{t}}^2\oplus\dots\oplus \Phi _{t,D_{t}}^{p-1}\oplus (\Phi _{t,D_{t}})^{\oplus p})/D_t =  S^{4p-3}/D_t
\end{equation*}
where, for a $1$-dimensional representation $\varPhi $, we set $\varPhi ^{k}$ to be the $k^{th}$ power of $\varPhi $ induced by the multiplication in $S ^{1}$. In other words, if $\varPhi (v)=\lambda v$ then we set $\varPhi ^{k}(v)=\lambda^kv$. 
\end{enumerate}

\subsection{The $(2p-1)$-type of $M_{t}$}
Let $X^{[n]}$ denote the $n$-type of a space $X$.
\begin{lemma}\label{2p-2TypeOfMt}
$M_{t}^{[2p-2]}\simeq B\wwH_{t}$ and the composition $M_{t}\xrightarrow{i_{2p-2}} M_{t}^{[2p-2]}\xrightarrow{\simeq }B\wwH_{t}$ is homotopy equivalent to the classifying map $c_t\colon M_t \to B\wwH_t$.  
\end{lemma}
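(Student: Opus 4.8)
The plan is to obtain the equivalence $M_t^{[2p-2]}\simeq B\wwH_t$ by showing that the classifying map $c_t$ of the principal $\wwH_t$-bundle $S^{2p-1}\times S^{2p-1}\to M_t$ induces an isomorphism on $\pi_i$ for $i\le 2p-2$, while its target $B\wwH_t$ has no homotopy above dimension $2p-2$; the universal property of the Postnikov section then does the rest. First I would verify that the $\wwH_t$-action on $X:=S(\Psi_{\wwH_t})\times S((\Phi_{t,\wwH_t})^{\oplus p})=S^{2p-1}\times S^{2p-1}$ is free. Since $d_t$ has order $p$, lies outside the central subgroup $S^1$, and $\langle d_t\rangle\cap S^1=1$, we have $\wwH_t\cong\cy p\times S^1$; write a general element as $d_t^{\,j}z$ with $0\le j\le p-1$, $z\in S^1$. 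From the decomposition $\Psi_{\wwH_t}=\Phi'_t\oplus\Phi_{t,\wwH_t}\Phi'_t\oplus\cdots\oplus\Phi_{t,\wwH_t}^{p-1}\Phi'_t$ recorded in the proof of Lemma~\ref{chernclassoverBH} one reads off $\Psi_{\wwH_t}(z)=z\cdot I$ for $z\in S^1$, while $\Phi_{t,\wwH_t}=\Res^{\wP}_{\wwH_t}(\Phi_t)$ satisfies $\Phi_{t,\wwH_t}(d_t)=e^{2\pi i/p}$ and $\Phi_{t,\wwH_t}|_{S^1}=1$. Hence a fixed point on the second sphere forces $j=0$, and then a fixed point on the first sphere forces $z=1$: the action is free, $M_t$ is a closed manifold, and $X\to M_t$ is a principal $\wwH_t$-bundle classified by $c_t\colon M_t\to B\wwH_t$.

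Next, using the standard identification of the Borel construction with the honest quotient for a free action of a compact Lie group, $X\times_{\wwH_t}E\wwH_t\simeq X/\wwH_t=M_t$, one exhibits
$$X\longrightarrow M_t\xrightarrow{\ c_t\ }B\wwH_t$$
as a fibration sequence up to homotopy. Since $S^{2p-1}$ is $(2p-2)$-connected, so is the fibre $X=S^{2p-1}\times S^{2p-1}$, and the homotopy long exact sequence shows that $c_t$ is an isomorphism on $\pi_i$ for $i\le 2p-2$. On the other hand $B\wwH_t\simeq B\cy p\times BS^1$ with $B\cy p\simeq K(\cy p,1)$ and $BS^1\simeq K(\bZ,2)$, so $\pi_i(B\wwH_t)=0$ for all $i\ge 3$, in particular for all $i>2p-2$. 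Thus $B\wwH_t$ is already its own $(2p-2)$-type; by the universal property of $i_{2p-2}$ the map $c_t$ factors, uniquely up to homotopy, as $M_t\xrightarrow{i_{2p-2}}M_t^{[2p-2]}\xrightarrow{f}B\wwH_t$, and comparing homotopy groups shows that $f$ is a weak equivalence, hence (both sides having CW homotopy type) a homotopy equivalence. This is exactly the assertion.

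The only genuine computation is the freeness check, and the only point needing care is the passage from $M_t$ to the homotopy quotient $X\times_{\wwH_t}E\wwH_t$, which is routine for a free smooth action of a compact Lie group. If one prefers to avoid the Borel construction, one can instead read off $\pi_1(M_t)\cong\pi_0(\wwH_t)=\cy p$, $\pi_2(M_t)\cong\pi_1(\wwH_t)=\bZ$, and $\pi_i(M_t)=0$ for $3\le i\le 2p-2$ directly from the bundle $\wwH_t\to X\to M_t$, and then match the $\pi_1$-action on $\pi_2$ and the single $k$-invariant in $H^3(\cy p;\bZ)$ with those of $B\wwH_t$ by hand; routing everything through $c_t$ makes those identifications automatic. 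I expect the main obstacle to be purely bookkeeping: unwinding the representations to confirm freeness, and fixing a Postnikov convention so that the connectivity of $c_t$ is compatible with the definition of the $(2p-2)$-type.
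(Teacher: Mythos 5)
Your proposal is correct and takes the same approach as the paper's (very terse) proof, which simply cites the $(2p-2)$-connectivity of $S^{2p-1}\times S^{2p-1}$ and the freeness of the $\wwH_t$-action. You fill in exactly the implicit steps — verifying freeness from the representation decomposition, invoking the Borel construction to exhibit the fibration $X\to M_t\to B\wwH_t$, and applying the long exact sequence plus the universal property of Postnikov truncation — and all of these checks are correct.
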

\begin{proof}
This follows as 
$S(\Psi _{\wwH}, (\Phi _{t,\wwH_{t}})^{\oplus p})=S^{2p-1}\times S^{2p-1}$ is 
$(2p-2)$--connected and the action of $\wwH_{t}$ on $S(\Psi _{ 
\wwH_{t}}, (\Phi _{t,\wwH_{t}})^{\oplus p})$ is free.
\end{proof}
\begin{lemma}\label{2p-1TypeOfMt}
$M_{t}^{[2p-1]}\simeq B_{\wwH_{t}}$.
\end{lemma}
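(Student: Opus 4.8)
The plan is to exhibit both $M_t^{[2p-1]}$ and $B_{\wwH_t}$ as principal $K(\bZ\oplus\bZ,2p-1)$--fibrations over $B\wwH_t$ and then to match their $k$--invariants. For $B_{\wwH_t}$ this holds by construction (Section~\ref{TheDefitionOfB_G}): it is classified by $k_{\wwH_t}=\Res^{\wP}_{\wwH_t}\bigl(\zeta\oplus(\alpha^{p}-\alpha^{p-1}\beta+\beta^{p})\bigr)\in H^{2p}(B\wwH_t;\bZ\oplus\bZ)$. For $M_t$, Lemma~\ref{2p-2TypeOfMt} gives $M_t^{[2p-2]}\simeq B\wwH_t$; from the fibre bundle $\wwH_t\to S^{2p-1}\times S^{2p-1}\to M_t$, together with $\pi_k(\wwH_t)=0$ for $k\ge 2$ (as $\wwH_t$ is a $1$--dimensional Lie group), we get $\pi_{2p-1}(M_t)\cong\bZ\oplus\bZ$; and $\pi_1(M_t)$ acts trivially on this group since $\wwH_t$ acts on each sphere preserving orientation (the representations in play land in $SU(p)$, resp.\ in the diagonal $U(1)\subset U(p)$). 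Hence the Postnikov stage $M_t^{[2p-1]}\to M_t^{[2p-2]}=B\wwH_t$ is a principal $K(\bZ\oplus\bZ,2p-1)$--fibration, classified by some $\kappa\in H^{2p}(B\wwH_t;\bZ\oplus\bZ)$, and it remains to prove $\kappa=k_{\wwH_t}$.

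The computation of $\kappa$ is best done by replacing $M_t$ with a fibre product of sphere bundles. Since $\wwH_t$ acts freely on $S(\Psi_{\wwH_t})\times S(\Phi_{t,\wwH_t}^{\oplus p})$, the quotient $M_t$ is homotopy equivalent to the homotopy quotient, and the homotopy quotient of a product is the fibre product over $B\wwH_t$ of the homotopy quotients of the factors. As the homotopy quotient of the unit sphere of a representation is the sphere bundle of the associated vector bundle, this gives
$$M_t\ \simeq\ S(\psi_{\wwH_t})\ \times_{B\wwH_t}\ S(\phi_{t,\wwH_t}^{\oplus p}),$$
a fibre product of sphere bundles of complex rank $p$ bundles over $B\wwH_t$, with fibre $S^{2p-1}\times S^{2p-1}$. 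This fibre product is the pullback along $(\psi_{\wwH_t},\phi_{t,\wwH_t}^{\oplus p})\colon B\wwH_t\to BU(p)\times BU(p)$ of $BU(p-1)\times BU(p-1)\to BU(p)\times BU(p)$. Fibrewise $(2p-1)$--Postnikov truncation is compatible with pullback and with products, and the fibrewise truncation of $BU(p-1)\to BU(p)$, with fibre $S^{2p-1}=U(p)/U(p-1)$, is the principal $K(\bZ,2p-1)$--fibration classified by the transgression of the fundamental class of the fibre, which is $c_p\in H^{2p}(BU(p);\bZ)$ (as one reads off the Serre spectral sequence). Since $\pi_i(B\wwH_t)=0$ for $i\ge 2p$, the fibrewise $(2p-1)$--truncation of the above fibre product is exactly $M_t^{[2p-1]}$, and therefore
$$\kappa\ =\ \bigl(\,\ch_p(\psi_{\wwH_t}),\ \ch_p(\phi_{t,\wwH_t}^{\oplus p})\,\bigr)\ \in\ H^{2p}(B\wwH_t;\bZ)\oplus H^{2p}(B\wwH_t;\bZ).$$

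It remains to evaluate these two Chern classes. By naturality and the fact (recorded after Theorem~\ref{TheoremofLeary}) that $\zeta=\ch_p(\Psi)$, we get $\ch_p(\psi_{\wwH_t})=\Res^{\wP}_{\wwH_t}\ch_p(\Psi)=\Res^{\wP}_{\wwH_t}\zeta$. For the second factor, $\phi_{t,\wwH_t}$ is the line bundle with first Chern class $\Phi_{t,\wwH_t}=v'$ by Remark~\ref{CohomologyOfSubgroups}(2), so $\ch_p(\phi_{t,\wwH_t}^{\oplus p})=(v')^{p}$, and by the restriction computation at the end of Section~\ref{three} this equals $\Res^{\wP}_{\wwH_t}(\alpha^{p}-\alpha^{p-1}\beta+\beta^{p})$. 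Hence $\kappa=\Res^{\wP}_{\wwH_t}\bigl(\zeta\oplus(\alpha^{p}-\alpha^{p-1}\beta+\beta^{p})\bigr)=k_{\wwH_t}$, so $M_t^{[2p-1]}$ and $B_{\wwH_t}$ are principal $K(\bZ\oplus\bZ,2p-1)$--fibrations over $B\wwH_t$ with the same classifying map and are therefore homotopy equivalent, in fact over $B\wwH_t$.

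The step I expect to require the most care is the middle one: justifying the identification of $M_t$ with the fibre product of sphere bundles and, in particular, the assertion that in this range the sphere bundle of a complex rank $p$ bundle has a single relevant $k$--invariant, its top Chern class $c_p$ (the transgression statement), together with the bookkeeping showing that the $\pi_1$--action on $\pi_{2p-1}$ is trivial, so that every fibration in sight is genuinely principal and the $k$--invariants can be compared in untwisted cohomology.
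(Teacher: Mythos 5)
Your proposal is correct and follows the same underlying idea as the paper's (very terse) proof, namely that the $k$-invariant of $M_t$ over $B\wwH_t$ is the pair of top Chern classes $\ch_p(\psi_{\wwH_t})\oplus\ch_p(\phi_{t,\wwH_t}^{\oplus p})$, which you then identify with $\Res^{\wP}_{\wwH_t}(\zeta)\oplus\Res^{\wP}_{\wwH_t}(\alpha^p-\alpha^{p-1}\beta+\beta^p)=k_{\wwH_t}$. The paper simply states this formula and cites Lemma~\ref{2p-2TypeOfMt}; you have supplied the justification (fibre product of sphere bundles, transgression to $c_p$ in $BU(p-1)\to BU(p)$, triviality of the $\pi_1$-action, and compatibility of fibrewise with absolute truncation using that $\pi_i(B\wwH_t)=0$ for $i\ge 3$), so your write-up is a fleshed-out version of the same argument rather than a different route.
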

\begin{proof}
Let $\ch _{p}(\varPhi )$ denote the $p^{th}$ Chern class of a representation $\varPhi $ then 
$$k_{2p-1}(M_{t})  = \ch _{p}( \Psi _{\wwH_{t}}) \oplus \ch _{p} ( ( \Phi _{t,\wwH_{t} } )^{\oplus p} )= \Res_{\wwH_{t}}^{\wP}(\zeta ) \oplus \Res_{\wwH_{t}}^{ \wP}(\alpha ^{p}-\alpha ^{p-1}\beta +\beta ^{p})=k_{\wwH_{t}}$$ 
Hence the results follows by Lemma \ref{2p-2TypeOfMt}.
\end{proof}
\subsection{The tangent bundle of $M_{t}$} 
\begin{lemma} The tangent bundle of $M_t$ is stably equivalent to the pull-back of $\tau _{\wwH_{t}}\colon B_{\wwH_t} \to BSO$ (see \textup{Definition \ref{bundlechoices_tangent}}).
\end{lemma}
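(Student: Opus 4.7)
The plan is to compute the stable tangent bundle of $M_t$ directly from its representation-theoretic description, then compare with the pull-back of $\tau_{\wwH_t}$ using the character identities from Section~\ref{three} together with a dimension argument. First, apply the standard fact that the unit sphere $S(V)$ of a complex $G$-representation $V$ has $G$-equivariant tangent bundle satisfying $TS(V) \oplus \varepsilon_{\bR} \cong S(V) \times V$. Applying this to both factors $\Psi_{\wwH_t}$ and $\Phi_{t,\wwH_t}^{\oplus p}$ and passing to the free quotient, one obtains
$$ TM_t \oplus \varepsilon^2 \cong c_t^*\bigl(\psi_{\wwH_t} \oplus \phi_{t,\wwH_t}^{\oplus p}\bigr), $$
where $c_t \colon M_t \to B\wwH_t$ is the classifying map, which factors through $B_{\wwH_t}$ by Lemma~\ref{2p-1TypeOfMt}. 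On the other side, the pull-back of $\tau_{\wwH_t}$ along this factorization equals $c_t^*\Btau{\wwH_t} = c_t^*\bigl(\psi_{\wwH_t} \oplus \phi_{0,\wwH_t}^{\oplus p} \oplus \phi_{p,\wwH_t}^{\oplus p}\bigr)$.

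The next step is to invoke the character restriction identities from Section~\ref{three}: for $1 \leq t \leq p-1$ one has $\Phi_0|_{\wwH_t} = \Phi_{t,\wwH_t}$ and $\Phi_p|_{\wwH_t} = \Phi_{t,\wwH_t}^{t}$, so $\phi_{0,\wwH_t} \cong \phi_{t,\wwH_t}$ and $\phi_{p,\wwH_t} \cong \phi_{t,\wwH_t}^{\otimes t}$ as complex line bundles, while in the boundary cases $\Phi_p|_{\wwH_0}$ and $\Phi_0|_{\wwH_p}$ are trivial. For $t = 0$ and $t = p$ the stable equivalence follows immediately, since one of the summands $\phi_{0,\wwH_t}^{\oplus p}$ or $\phi_{p,\wwH_t}^{\oplus p}$ on the right is the trivial bundle $\varepsilon^p$ and the remaining summand matches $\phi_{t,\wwH_t}^{\oplus p}$ on the left.

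For $1 \leq t \leq p-1$ the claim reduces, after cancelling $\psi_{\wwH_t}$ and $\phi_{t,\wwH_t}^{\oplus p}$ from both sides, to showing that $c_t^*\bigl((\phi_{t,\wwH_t}^{\otimes t})^{\oplus p}\bigr)$ is stably trivial as a real bundle on $M_t$. This is the main obstacle. The plan is to argue via characteristic classes: using $pv' = 0$ and Frobenius, the total Chern class is $(1 + tv')^p = 1 + t(v')^p$, concentrated in degree $2p$, and the underlying real bundle has total Pontryagin class $1 - t^2(v')^{2p}$, with all nontrivial components in degrees $\geq 4p$. Since $\dim M_t = 4p - 2$, these pull back to zero along $c_t$; the Stiefel--Whitney classes also vanish because $v'$ has odd prime order. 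A standard obstruction argument for stable triviality of an oriented real vector bundle over the $(4p-2)$-dimensional manifold $M_t$ then completes the proof, or alternatively one can work in $\widetilde{KO}^0(M_t)$, exploiting the relation $[\phi_{t,\wwH_t}]^p = 1$ together with the skeletal filtration on $M_t$ to force the relevant higher powers of $[\phi_{t,\wwH_t}] - 1$ to vanish.
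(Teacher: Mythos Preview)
Your setup and the boundary cases $t=0,p$ are correct, and for $1\leq t\leq p-1$ your reduction to showing that $c_t^*\bigl((\phi_{t,\wwH_t}^{\otimes t})^{\oplus p}\bigr)$ is stably trivial is also correct. The gap is in that final step: neither of your two proposed arguments closes it.

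The characteristic-class route fails because vanishing of Stiefel--Whitney and Pontryagin classes does not imply stable triviality; there are nontrivial elements of $\widetilde{KO}(S^{8k+1})\cong\bZ/2$, for instance, with no nonzero characteristic classes. Your alternative $K$-theory sketch also does not work with the inputs you name. Writing $x=c_t^*[\phi_{t,\wwH_t}]-1$, you need $p\bigl((1+x)^t-1\bigr)=0$, and it would suffice to show $px=0$. But the relations $(1+x)^p=1$ and $x^{2p}=0$ alone do not force this: already for $p=3$ the quotient ring $\bZ[x]/\bigl((1+x)^3-1,\,x^6\bigr)$ has $x$ of infinite additive order (one computes $x^6=27x+9x^2$, and quotienting $\bZ\{x,x^2\}$ by $27x+9x^2$ leaves a $\bZ$-summand). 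So there must be further relations in $\widetilde K(M_t)$ beyond those coming from $L^p=1$ and the dimension bound.

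The paper supplies exactly this missing ingredient. It observes that $M_t$ is the total space of a $\CP^{p-1}$-bundle over the lens space $L_t=S(\Phi_{t,D_t}^{\oplus p})/D_t$, and that the line bundle $\phi_{t,\wwH_t}$ (being trivial on $S^1\subset\wwH_t$) is pulled back from $BD_t$, hence $x\in\widetilde K(M_t)$ lies in the image of $\widetilde K(L_t)$. Kambe's theorem gives that $\widetilde K(L_t)$ has exponent~$p$, so $px=0$ follows. With this in hand, $c_t^*(p[\phi_{t,\wwH_t}])$, $c_t^*(p[\phi_{0,\wwH_t}])$ and $c_t^*(p[\phi_{p,\wwH_t}])$ are all stably trivial and both sides reduce to $c_t^*[\psi_{\wwH_t}]$. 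Your argument can be repaired by inserting this lens-space step in place of the characteristic-class or abstract-filtration reasoning.
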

\begin{proof}
The tangent
bundle $T(M_{t})$ of $M_{t}$ clearly fits into the following pull-back diagram 
$$\xymatrix{T(M_{t})\oplus \varepsilon \ar[r]\ar[d] &  E\wwH_{t}\times _{ 
\wwH_{t}}(\mathbb{C}^{p}\times \mathbb{C}^{p})\ar[d]^\pi \\ 
M_{t}\ar[r]^{c_t} & B\wwH_{t}
}$$
where $\varepsilon $ is a trivial bundle over $M_{t}$ and the action of $\wwH_{t}$ on $\mathbb{C}^{p}\times \mathbb{C}^{p}$ is given by 
$\Psi _{\wwH_{t}}$ and $(\Phi _{t,\wwH_{t}})^{\oplus p}$ respectively. Hence we have 
$c_t^{*}([\pi ])=c_t^{*}([\psi _{\wwH_{t}}]+p[\phi _{t,\wwH_{t}}])$ in complex $K$-theory $\widetilde{K}(M_{t})$. However, $M_{t}$ fits into the 
following pull-back diagram:
$$\xymatrix@R-2pt{M_{t} \ar[r]\ar[d] & ED_{t}\times _{D_{t}}\CP^{p-1} \ar[d]\\ 
L_{t}\ar[r] & BD_{t}
}$$
where $L_{t}=S(\Phi _{t,D_{t}}^{\oplus p})/D_{t}$ and the action of $D_{t}$ on $\CP^{p-1}$ is induced by action of $D_{t}$ on $\mathbb{C}^{p}$ given by $\Psi _{D_{t}}$.
Hence $\widetilde{K}(M_{t})$ is a $\widetilde{K}(L_{t})$--module by Proposition 2.13 in Chapter IV in \cite{karoubi1} and the exponent of $\widetilde{K}(L_{t})$   is $p$ (see Theorem 2 in \cite{kambe1}). Hence the exponent of $\widetilde{K}(M_{t})$ is $p$ and
$c_t^{*}([\pi ])=c_t^{*}([\psi _{\wwH_{t}}])=c_t^{*}([\Btau{\wwH_{t}}]) $. This means the tangent bundle of $M_{t}$ is 
stably equivalent to the pull-back of the bundle $\Btau{\wwH_{t}}$ over $B\wwH_{t}$ by the classifying map. However, 
by Lemma \ref{2p-2TypeOfMt} and Lemma \ref{2p-1TypeOfMt}, we know that the classifying map is homotopy equivalent to the compositon 
$\xymatrix{M_{t}\  \ar[r]^(0.4){i_{2p-1}}& M_{t}^{[2p-1]} \ar[r]^(0.6){\simeq }&  B_{\wwH_{t}} \ar[r]^{\pi_{\wwH_{t}}} & B\wwH_{t}}$.
Hence, the tangent bundle of $M_{t}$ is stably equivalent to the pull-back of $\tau _{\wwH_{t}}$.
\end{proof}

\subsection{The tangent bundle of $N_{t}$} 
\begin{lemma}
The tangent bundle of $N_t$ is stably equivalent to the pull-back of 
$\Btau{D_{t}}\colon BD_t \to BSO$.
\end{lemma}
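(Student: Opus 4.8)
The plan is to mimic the proof just given for the tangent bundle of $M_t$, but in the simpler setting of the lens-like space $N_t$, where the group $D_t \cong \cy p$ is finite cyclic. First I would exhibit $N_t$ as the sphere bundle / unit sphere of the representation
$$\varPhi_{t,D_t} \oplus \varPhi_{t,D_t}^2 \oplus \dots \oplus \varPhi_{t,D_t}^{p-1} \oplus (\varPhi_{t,D_t})^{\oplus p}$$
of $D_t$, so that $N_t = S^{4p-3}/D_t$. As in the $M_t$ case, the tangent bundle fits into a pull-back diagram
$$\xymatrix{T(N_t)\oplus \varepsilon \ar[r]\ar[d] & ED_t \times_{D_t} \bC^{2p-1} \ar[d]^\pi\\ N_t \ar[r]^(0.45){c} & BD_t}$$
where $\varepsilon$ is a trivial line bundle (coming from the radial direction) and $D_t$ acts on $\bC^{2p-1} = \bC^{p-1} \oplus \bC^p$ via the two summands above. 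This immediately gives, in $\widetilde K(N_t)$,
$$c^*([\pi]) = c^*\bigl([\psi_{D_t}] + p[\phi_{t,D_t}]\bigr),$$
since $\psi_{D_t} = \Res^{\wP}_{D_t}(\Psi)$ decomposes (by the same induction computation as in Lemma \ref{chernclassoverBH}) as $\phi_{t,D_t} \oplus \phi_{t,D_t}^2 \oplus \dots \oplus \phi_{t,D_t}^{p-1}$ together with — wait, one must be careful: over $D_t$ the summand structure of $\Res^{\wP}_{D_t}\Psi$ needs to be matched against the $\bC^{p-1}$ summand of the representation defining $N_t$, so the first step is precisely to check that these two $(p-1)$-dimensional pieces agree up to the trivial summand, which they do because $\Phi'_p$ restricts trivially to $D_t$.

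Next I would kill the term $p[\phi_{t,D_t}]$ using the exponent of reduced $K$-theory. The relevant fact is that $\widetilde K(BD_t^{(N)})$ (or equivalently $\widetilde K$ of the finite lens space $S^{4p-3}/D_t = N_t$ itself) has exponent $p$: indeed $N_t$ is a standard lens space for the cyclic group $\cy p$, and $\widetilde K$ of such a lens space is a finite $p$-group, killed by $p$ — this is exactly the kind of statement recorded in \cite{kambe1}, and one can also quote it directly from the structure of $\widetilde K$ of lens spaces. Hence $p \cdot c^*[\phi_{t,D_t}] = 0$ in $\widetilde K(N_t)$, so $c^*([\pi]) = c^*([\psi_{D_t}]) = c^*([\Btau{D_t}])$, where $\Btau{D_t} = \Res^{\wP}_{D_t}(\Btau{\wP})$ restricted appropriately — more precisely one identifies $[\Btau{D_t}]$ with the bundle $\psi_{D_t} \oplus \phi_{0,D_t}^{\oplus p} \oplus \phi_{p,D_t}^{\oplus p}$ and checks that the $\phi^{\oplus p}$ terms are again wiped out by the exponent-$p$ argument, leaving $c^*([\Btau{D_t}]) = c^*([\psi_{D_t}])$ in $\widetilde K(N_t)$.

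Finally I would conclude that the stable tangent bundle of $N_t$ is the pull-back of $\Btau{D_t}\colon BD_t \to BSO$ along the classifying map $c\colon N_t \to BD_t$. Since $D_t$ is finite and $N_t = S^{4p-3}/D_t$ with $S^{4p-3}$ highly connected, the classifying map $c$ is $(4p-4)$-connected, and the bundle $\Btau{D_t}$ is defined over all of $BD_t$, so there is no subtlety in identifying the pull-back — unlike the $M_t$ case, there is no need to pass through an intermediate $(2p-1)$-type, since here we really do want the bundle over $BD_t$ rather than over some $B_{D_t}$. The main obstacle I anticipate is not conceptual but bookkeeping: correctly matching the explicit decomposition of $\Res^{\wP}_{D_t}(\Psi)$ into line bundles over $D_t$ against the representation used to define $N_t$, and confirming that every ``extra'' line-bundle summand (the various $\phi^{\oplus p}$ and trivial pieces) is genuinely annihilated in $\widetilde K(N_t)$ by the exponent-$p$ fact, so that the equality $c^*([\pi]) = c^*([\Btau{D_t}])$ holds on the nose in reduced $K$-theory.
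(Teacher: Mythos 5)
Your approach takes a genuinely different route from the paper's, and unfortunately the route you chose contains a real gap. The paper does not invoke $K$-theory at all here: it simply observes that $\Btau{D_t}$, as a sum of one-dimensional representations of the cyclic group $D_t$, coincides (up to trivial summands) with the representation $\Phi_{t,D_t}\oplus \Phi_{t,D_t}^2\oplus\dots\oplus \Phi_{t,D_t}^{p-1}\oplus(\Phi_{t,D_t})^{\oplus p}$ defining $N_t$. Since $\psi_{D_t}=\Res^{\wP}_{D_t}\Psi$ is the regular representation $1\oplus\Phi_{t,D_t}\oplus\dots\oplus\Phi_{t,D_t}^{p-1}$ and at least one of $\phi_{0,D_t},\phi_{p,D_t}$ is trivial in the cases $t=0,p$ actually used later (Lemma~\ref{LemmaFor4p-4}), the identification $T(N_t)\oplus\varepsilon\cong c^*(\Btau{D_t})$ holds at the level of bundles, and no exponent argument is needed. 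This is the whole point of introducing the auxiliary manifolds $N_t$: unlike $M_t$, they are quotients of a \emph{linear} $D_t$-sphere, so the stable tangent bundle is literally pulled back from a representation of $D_t$.

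The gap in your argument is the claim that $\widetilde K(N_t)$ has exponent $p$. Kambe's Theorem~2 \cite{kambe1}, which the paper quotes in the $M_t$ proof, gives exponent $p^{\lceil n/(p-1)\rceil}$ for a lens space $S^{2n+1}/\cy p$; the paper applies this to $L_t=S^{2p-1}/D_t$ (so $n=p-1$ and the exponent is exactly $p$) and then transports it to $M_t$ via the module structure of $\widetilde K(M_t)$ over $\widetilde K(L_t)$. For $N_t=S^{4p-3}/D_t$ one has $n=2p-2$, and the exponent is $p^2$, not $p$; there is also no module structure over a lower-dimensional lens space available. So the step ``$p\cdot c^*[\phi_{t,D_t}]=0$ in $\widetilde K(N_t)$'' is not justified, and a quick computation for $p=3$, $t=1$ shows it actually fails. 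You should instead simply decompose $\Res^{\wP}_{D_t}\Psi$ and $\Res^{\wP}_{D_t}\Phi_0$, $\Res^{\wP}_{D_t}\Phi_p$ into characters of $D_t$ and compare with the representation defining $N_t$ directly, as the paper does.
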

\begin{proof}
The tangent
bundle $T(N_{t})$ of $N_{t}$ clearly fits into the following pull-back diagram 
$$\xymatrix@R-2pt{T(N_{t})\oplus \varepsilon \ar[r]\ar[d] & ED_{t}\times _{D_{t}}\mathbb{C}^{2p-1} \ar[d]\\ 
N_{t}\ar[r] &  BD_{t}
}$$
where $\varepsilon $ is a trivial bundle over $N_{t}$ and the action of $D_{t}$ on $\mathbb{C}^{2p-1}$ is given by 
$ \Phi _{t,D_{t}} \oplus \Phi _{t,D_{t}}^2 \oplus \dots  \oplus \Phi _{t,D_{t}}^{p-1} \oplus  (\Phi _{t,D_{t}})^{\oplus p}$, 
where  $\varPhi ^k(g)=(\varPhi (g))^k$ for a 1-dimensional representation $\varPhi \colon G \to S ^1$. Now it is easy to see that  
$$\Btau{D_{t}} =1 \oplus \Phi _{t,D_{t}} \oplus \Phi _{t,D_{t}}^2 \oplus \dots  \oplus \Phi _{t,D_{t}}^{p-1}\oplus  (\Phi _{t,D_{t}})^{\oplus p}\ .$$
Hence it is clear that the tangent bundle of $N_{t}$ is the stably equivalent to the 
pull-back of $\Btau{D_{t}}$.
\end{proof}

\section{The image of the fundamental class}

In this section we define a subset $T_{\wP} \subset H_{4p-3}(B_{\wP};\bZ)$, which will turn out to contain the images of all the possible fundamental classes for our actions. To show that this set is non-empty, and consists of primitive elements of infinite order, we need to compute some cohomology groups of $B_{G}$, $G \subset \wP$. To carry out these computations, we will use the cohomology Serre spectral sequence of the fibration $ K \longrightarrow B _{G} \xrightarrow{\pi _{G}}  BG $,
where $K=K(\bZ\oplus \bZ,2p-1)$.

\subsection{Definition of $\gamma _{S^1}$, $\gamma _{\wwH _t}$ and $T_{\wP}$} \label{SomeClasses}
First note that  the universal cover of $M_{t}$ is $ \CP^{p-1} \times S^{2p-1} $, for $0\leq t\leq p$, and the universal cover of $B_{\wwH_{t}}$ is $B_{S^1}$. Hence, we can assume that we have the following pull-back diagram where the map $c$ does not depend on $t$. 
\[\xymatrix{\CP^{p-1} \times S^{2p-1} \ar[r]^(0.6)c\ar[d]& B_{S^1}\ar[d]\\ 
M_{t} \ar[r]^{c_{t}} & B_{\wwH_{t}}
}\]
We define an element $\gamma _{S^{1}}$ in $H_{4p-3}(B_{S^{1}};\bZ)$ as the image of the fundamental class of 
$\CP^{p-1} \times S^{2p-1}$ under the map $c$ defined in the above diagram.  In other words
 $$ \gamma _{S^{1}}=c_{*}\big [\CP^{p-1} \times S^{2p-1}\big ]\in H_{4p-3}(B_{S^{1}};\bZ)\ .$$ 
Similarly, we define $\gamma _{\wwH_{t}}\in H_{4p-3}(B_{\wwH_{t}};\bZŒ)$, $0\leq t\leq p$,  as the image of the fundamental class of $M_t$. In other words
$$ \gamma _{\wwH_{t}} =(c_{t})_{*}\big [M_{t} \big ]\in  H_{4p-3}(B_{\wwH_{t}};\bZ)$$
\begin{definition}\label{def: define TsubGamma}
We define 
$$ T_{\wP}=\{ \gamma \in H_{4p-3}(B_{\wP}; \bZ ) \vv\  p\cdot(\tr(\gamma )-\gamma _{S^1})=0  \} $$ 
where 
$\tr\colon H_{4p-3}(B_{\wP}; \bZ )\to H_{4p-3}(B_{S^{1}}; \bZ )$ 
denotes the transfer map. 
\end{definition}
One of our main tasks will be to show that this subset
$T_{\wP}$ is non-empty ! 

\subsection{The (co)homology of $K$}

Let $A$ be an abelian group. We will write $$ _{(p)}A := A/\la \text{torsion prime to } p \ra  \ .$$
The map $$P^1\colon H^i (K;\cy p) \to H^{i + 2(p-1)}(K; \cy p)$$ is the first mod $p$ Steenrod operation and the map $$\delta \colon H^{i}(K;\bZ /p)\to H^{i+1}(K;\bZ )$$ is the Bockstein homomorphism.
The following lemma gives the cohomology groups of $K$ in the range we need.

\begin{lemma}\label{CohomologyOfK}
Denote $H^{2p-1}(K;\bZ)=\la z_{1},z_{2} \ra =\bZ \oplus \bZ $ and let $\bar{z}_{1}$ and $\bar{z}_{2}$ be the mod $p$ reductions of $z_1$ and $z_2$ respectively. We have
\begin{enumerate}
\item $H^{i}(K;A)$ is a torsion group for $2p \leq i \leq 4p-3$.
\item $_{(p)}H^{i}(K;A)=0$ for $2p \leq i \leq 4p-4$.
\item $_{(p)}H^{4p-3}(K;\bZ )=0$.
\item $_{(p)}H^{4p-2}(K;\bZ)=\la z_1 \cup z_2, \delta (P^1(\bar{z}_{1})), \delta (P^1(\bar{z}_{2})) \ra =\bZ \oplus \bZ /p \oplus \bZ /p \ .$
\item $H^{4p-1}(K;\bZ)$ has no $p$-torsion.
\end{enumerate}
\end{lemma}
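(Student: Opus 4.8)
The plan is to reduce the whole statement to the single Eilenberg--MacLane space $L := K(\bZ, 2p-1)$ by means of the homeomorphism $K = L \times L$ and the Künneth theorem, and then to pin down the integral cohomology of $L$ through degree $4p-1$ from two standard inputs. The first input is rational: since $n := 2p-1$ is odd, $H^*(L;\bQ) = \Lambda_\bQ(\iota_n)$ is the exterior algebra on the fundamental class, so $H^i(L;\bQ)$ is $\bQ$ for $i \in \{0, n\}$ and $0$ otherwise. The second is the Cartan--Serre description of $H^*(L;\bZ/p)$ as the free graded-commutative algebra on the admissible monomials $\mathrm{St}^I\bar\iota_n$ of excess $< n$ not ending in $\beta$; because $n = 2p-1$ and $p$ is odd, the only such generators in degrees $\leq 4p-1$ are $\bar\iota_n$ (degree $n$), $P^1\bar\iota_n$ (degree $4p-3$) and $\beta P^1\bar\iota_n$ (degree $4p-2$), while the next candidate $P^2\bar\iota_n$ lands in degree $6p-5 > 4p-1$. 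Since $\bar\iota_n^2 = 0$, this yields $H^i(L;\bZ/p) = \bZ/p$ for $i \in \{0, n, 4p-3, 4p-2\}$ and $H^i(L;\bZ/p) = 0$ for all other $i \in [1, 4p-1]$.

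Feeding the rational computation and Serre's mod $p$ computation into the universal coefficient and Bockstein exact sequences gives the integral cohomology of $L$ in the range: $H^i(L;\bZ) = \bZ$ for $i \in \{0, n\}$; $H^i(L;\bZ)$ is finite of order prime to $p$ for $n < i < 4p-2$ and for $i = 4p-1$; and ${}_{(p)}H^{4p-2}(L;\bZ) = \bZ/p\langle\delta P^1\bar\iota_n\rangle$. The one delicate point is this last identity. From ${}_{(p)}H^{4p-3}(L;\bZ) = 0$ one gets $H^{4p-3}(L;\bZ)\otimes\bZ/p = 0$, so the mod $p$ universal coefficient sequence forces $\mathrm{Tor}(H^{4p-2}(L;\bZ),\bZ/p) \cong H^{4p-3}(L;\bZ/p) = \bZ/p$, whence ${}_{(p)}H^{4p-2}(L;\bZ)$ is cyclic of $p$-power order; the class $\delta P^1\bar\iota_n$ has order exactly $p$ because $\delta$ is a connecting homomorphism, and it is not divisible by $p$ since its mod $p$ reduction $\beta P^1\bar\iota_n$ is a nonzero algebra generator; an element of order $p$ not divisible by $p$ can only generate a cyclic $p$-group of order $p$, so ${}_{(p)}H^{4p-2}(L;\bZ) = \bZ/p$ exactly. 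In particular the only $p$-torsion of $L$ in degrees $\leq 4p-1$ lies in degree $4p-2$ and has order $p$.

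Now I would pass to $K = L\times L$. Over the field $\bZ/p$ the Künneth formula is a plain tensor product and immediately gives, in degrees $\leq 4p-1$: $H^0(K;\bZ/p) = \bZ/p$; $H^n(K;\bZ/p) = \langle\bar z_1,\bar z_2\rangle$; $H^i(K;\bZ/p) = 0$ for $2p \leq i \leq 4p-4$; $H^{4p-3}(K;\bZ/p) = \langle P^1\bar z_1, P^1\bar z_2\rangle$; $H^{4p-2}(K;\bZ/p) = \langle\bar z_1\bar z_2, \beta P^1\bar z_1, \beta P^1\bar z_2\rangle$ (using $2n = 4p-2$ and $\bar z_i^2 = 0$); and $H^{4p-1}(K;\bZ/p) = 0$. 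Rationally, $H^i(K;\bQ) = 0$ for $2p \leq i \leq 4p-3$ and $H^{4p-2}(K;\bQ) = \bQ\langle z_1 z_2\rangle$. From here the five claims follow by universal coefficient bookkeeping. Claim (1): $H^i(K;\bQ) = 0$ makes $H^i(K;\bZ)$ torsion for $2p \leq i \leq 4p-3$, and since $H_{2p-1}(K;\bZ) = \bZ^2$ is free while the remaining relevant homology groups are torsion, $H^i(K;A)$ is torsion for every $A$. Claim (2): $H^i(K;\bZ/p) = 0$ for $2p \leq i \leq 4p-4$ kills, via the mod $p$ universal coefficient sequence, both the $p$-torsion and the failure of $p$-divisibility of $H^i(K;\bZ)$, so ${}_{(p)}H^i(K;\bZ) = 0$; for general $A$ it is a routine $\Hom/\Ext$ computation against homology groups that are torsion of order prime to $p$. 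Claim (3): $\mathrm{Tor}(H^{4p-3}(K;\bZ),\bZ/p)$ injects into $H^{4p-4}(K;\bZ/p) = 0$, so $H^{4p-3}(K;\bZ)$ has no $p$-torsion and is torsion by (1). Claim (5): $H^{4p-1}(K;\bZ/p) = 0$ forces $H^{4p-1}(K;\bZ)\otimes\bZ/p = 0$, and a finitely generated $p$-divisible abelian group has no $p$-torsion. Claim (4): the integral Künneth formula, localized at $p$, has vanishing $\mathrm{Tor}$ terms in total degree $4p-2$ (the only $p$-torsion of $L$ below degree $4p-1$ lies in degree $4p-2 = 2n$, and $2n + 2n > 4p-1$), so ${}_{(p)}H^{4p-2}(K;\bZ)$ is the direct sum of ${}_{(p)}H^0(L;\bZ)\otimes{}_{(p)}H^{4p-2}(L;\bZ)$, ${}_{(p)}H^{n}(L;\bZ)\otimes{}_{(p)}H^{n}(L;\bZ)$ and ${}_{(p)}H^{4p-2}(L;\bZ)\otimes{}_{(p)}H^0(L;\bZ)$, which by the single-factor computation is precisely $\bZ/p\langle\delta P^1\bar z_2\rangle \oplus \bZ\langle z_1 z_2\rangle \oplus \bZ/p\langle\delta P^1\bar z_1\rangle$ (using also $z_1\cup z_2 = z_1\times z_2$).

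The main obstacle is the single-factor integral computation for $L = K(\bZ, 2p-1)$ through degree $4p-1$, and specifically the verification that the $p$-torsion of $H^{4p-2}(L;\bZ)$ is exactly $\bZ/p$ and not a larger cyclic $p$-group --- i.e., that there is no room in this range for higher Bocksteins or for torsion arising from products. This rests on the exact form of the Cartan--Serre description of $H^*(L;\bZ/p)$ and on the hypotheses $n = 2p-1$ and $p$ odd, which are exactly what keep $P^2\bar\iota_n$ and all other would-be generators above degree $4p-1$. Once that single-factor computation is in place, the five assertions for $K$ are the routine Künneth and universal-coefficient manipulations described above.
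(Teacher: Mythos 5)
The paper's own ``proof'' is just the citation to Cartan's computations of the (co)homology of Eilenberg--MacLane spaces; your argument---reducing to a single $K(\bZ,2p-1)$ via K\"unneth, feeding in the rational exterior algebra and the Cartan--Serre description of $H^*(K(\bZ,2p-1);\bZ/p)$ (generators $\bar\iota$, $P^1\bar\iota$, $\beta P^1\bar\iota$ in degrees $2p-1$, $4p-3$, $4p-2$ and nothing else through degree $4p-1$), and assembling the integral groups through universal coefficients and the Bockstein---is precisely the computation that citation encodes, and it is correct. One small remark: the sentence ``an element of order $p$ not divisible by $p$ can only generate a cyclic $p$-group of order $p$'' is loosely worded; what your argument actually uses, correctly, is that the $p$-torsion subgroup of $H^{4p-2}(L;\bZ)$ is cyclic (because $\mathrm{Tor}(H^{4p-2}(L;\bZ),\bZ/p)\cong H^{4p-3}(L;\bZ/p)=\bZ/p$) and contains the order-$p$, non-$p$-divisible element $\delta P^1\bar\iota$, which forces it to be exactly $\bZ/p$.
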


\begin{proof}
See results of Cartan \cite{cartan2}, \cite{cartan3}.
\end{proof}

We will also need some information about the homology of  $K$.

\begin{lemma} \label{HomologyOfK} We have
\begin{enumerate}
\item $H_{2p-1}(K;\bZ)=\bZ \oplus \bZ $,
\item $H_{2p-1}(K,;\bZ/p)=\bZ/p \oplus \bZ/p $, 
\item $_{(p)}H_{i}(K;A)$ is $0$ for $2p-1<i<4p-3$, and
\item $_{(p)}H_{4p-3}(K;\bZ)=\bZ /p \oplus \bZ /p $.
\end{enumerate}
\end{lemma}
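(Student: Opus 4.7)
The proof is essentially a bookkeeping translation of the cohomological information in Lemma \ref{CohomologyOfK} via universal coefficients, combined with a one-line Hurewicz argument in low degree. Let me describe the plan.

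First I would handle (1) and (2) directly. Since $K=K(\bZ\oplus\bZ,2p-1)$ is $(2p-2)$-connected, Hurewicz identifies $H_{2p-1}(K;\bZ)$ with $\pi_{2p-1}(K)=\bZ\oplus\bZ$, giving (1). For (2), the universal coefficient theorem combined with $H_{2p-2}(K;\bZ)=0$ collapses to $H_{2p-1}(K;\bZ/p)\cong H_{2p-1}(K;\bZ)\otimes\bZ/p\cong\bZ/p\oplus\bZ/p$.

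Next I would extract (3) and (4) for $\bZ$-coefficients from the UCT splitting
$$_{(p)}H^n(K;\bZ)\cong \mathrm{free}_{(p)}\bigl(H_n(K;\bZ)\bigr)\oplus \mathrm{tors}_{(p)}\bigl(H_{n-1}(K;\bZ)\bigr).$$
Lemma \ref{CohomologyOfK}(2)(3) says the left side vanishes for $2p\leq n\leq 4p-3$, so each $H_i(K;\bZ)$ with $2p\leq i\leq 4p-3$ has no $p$-primary free part, and each $H_j(K;\bZ)$ with $2p-1\leq j\leq 4p-4$ has no $p$-primary torsion. Intersecting these ranges gives $_{(p)}H_i(K;\bZ)=0$ for $2p\leq i\leq 4p-4$, which is (3) for $A=\bZ$. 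For (4), Lemma \ref{CohomologyOfK}(4) contributes a rank-one free summand to $_{(p)}H^{4p-2}$ (accounting for the $p$-primary free part of $H_{4p-2}(K;\bZ)$) and a $\bZ/p\oplus\bZ/p$ torsion summand, which by UCT must be the $p$-primary torsion of $H_{4p-3}(K;\bZ)$. Since the $p$-primary free part of $H_{4p-3}(K;\bZ)$ has just been shown to vanish (using $_{(p)}H^{4p-3}=0$), we conclude $_{(p)}H_{4p-3}(K;\bZ)=\bZ/p\oplus\bZ/p$.

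Finally I would upgrade (3) from $\bZ$-coefficients to arbitrary $A$ via the homology UCT
$$0\to H_i(K;\bZ)\otimes A\to H_i(K;A)\to \mathrm{Tor}\bigl(H_{i-1}(K;\bZ),A\bigr)\to 0\ .$$
For $2p<i\leq 4p-4$ both $H_i(K;\bZ)$ and $H_{i-1}(K;\bZ)$ are groups all of whose torsion is prime to $p$ (by the $\bZ$-version of (3)); and for any such group $T$ the groups $T\otimes A$ and $\mathrm{Tor}(T,A)$ are killed by $|T|_{\text{prime-to-}p}$, hence remain prime-to-$p$ torsion, so they die under $_{(p)}$. The boundary case $i=2p$ is immediate because $H_{2p-1}(K;\bZ)=\bZ^2$ is torsion-free, so the Tor term is zero and only the tensor term $H_{2p}(K;\bZ)\otimes A$ contributes, which is prime-to-$p$ torsion by the same reasoning.

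There is no real obstacle here — the only subtle point is the observation that tensoring or Tor-ing a group of prime-to-$p$ exponent with an arbitrary $A$ still produces a group of prime-to-$p$ exponent, which is what allows us to pass from $\bZ$-coefficients to arbitrary $A$ in (3).
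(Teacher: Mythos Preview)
Your argument is correct. Note, however, that the paper does not actually prove this lemma: it simply cites Cartan's computation of the (co)homology of Eilenberg--MacLane spaces, exactly as it did for Lemma~\ref{CohomologyOfK}. What you have done instead is show that Lemma~\ref{HomologyOfK} is a \emph{formal consequence} of Lemma~\ref{CohomologyOfK} via the universal coefficient theorem (plus Hurewicz in the bottom degree). This is a perfectly valid and arguably more satisfying route, since it makes the dependence on Cartan's results non-redundant: once the cohomological statement is accepted, no further appeal to \cite{cartan2,cartan3} is needed. The only thing your argument implicitly uses beyond Lemma~\ref{CohomologyOfK} is that the integral homology groups of $K$ are finitely generated in each degree (so that the UCT splitting into free and torsion parts applies cleanly); this is standard for Eilenberg--MacLane spaces on finitely generated coefficient groups and is of course contained in Cartan's results as well.
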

\begin{proof}
See results of Cartan \cite{cartan2}, \cite{cartan3}.
\end{proof}

\subsection{The (co)homology spectral sequences} \label{SectionOnHomologySerreSpectralSequence}
For any subgroup $G\subset \wP$, let $R$ be a ring and  
$$ \{E_{n,m}^{r}(G,R),d^{r} \}\text{\ \ \ and \ \ \ }\{E^{n,m}_{r}(G,R),d_{r}\}$$ be the homology and the cohomology  Serre spectral sequences (respectively) of the fibration $$ K \longrightarrow B _{G} \xrightarrow{\pi_{G}}  BG $$
where $K=K(\bZ\oplus \bZ,2p-1)$. The second page of these spectral sequence is given by: 
\begin{equation*}
E_{n,m}^{2}(G,R)=H_{n}(BG;H_{m}(K;R ))\text{\ \ \ and \ \ \ }E^{n,m}_{2}(G,R)=H^{n}(BG;H^{m}(K;R)), 
\end{equation*} 
and they converge to $H_{*}(B_{G};R ) $ and to $H^{*}(B_{G};R)$ respectively.
The filtration $F_{*}H_{*}(B_{G};R )$ of $H_{*}(B_{G};R )$ is given by
\begin{equation*}
F_{n}H_{n+m}(B_{G};R )=\Image\left\{ H_{n+m}(B_{G}^{\{n\}};R )\xrightarrow{(i_{n})_{*}}
H_{n+m}(B_{G};R )\right\}
\end{equation*}
and the filtration $F^{*}H^{*}(B_{G};R )$ of $H^{*}(B_{G};R )$ is given by
\begin{equation*}
F^{n}H^{n+m}(B_{G};R)=\ker\left\{H^{n+m}(B_{G};R)\xrightarrow{(i_{n})^{*}}  H^{n+m}(B_{G}^{\{n-1\}};R)\right\}
\end{equation*}
respectively, where 
 $$B_{G}^{\{n\}}=\pi_{G}^{-1}(BG^{(n)})\ .$$
When $R=\bZ$, we will write $ \{E_{n,m}^{r}(G),d_{r}\}$ and $ \{E^{n,m}_{r}(G),d_{r}\}$ instead of $ \{E_{n,m}^{r}(G,\bZ),d_{r}\} $ and $ \{E^{n,m}_{r}(G,\bZ),d_{r}\} $ respectively. The cohomology groups for $$ E^{*,0}_{2}(G,R)=H^{*}(BG;R) $$ are given in Theorem \ref{TheoremofLeary}, Theorem \ref{TheoremofLearymodp}, and Remark \ref{CohomologyOfSubgroups},  and the calculation of  $$E^{0,*}_{2}(G,R)=H^{*}(K;R)$$ is given in Lemma \ref{CohomologyOfK}.

\subsection{Definition of $z_G$, $z'_G$, and $Z _G$}  \label{SectionOnCohB_G}

Let $G$ be a subgroup of $\wP$ which contains $S^1$. We have 
$$E_{2p}^{0,2p-1}(G)=H^{2p-1}(K;\bZ )=\la z_1, z_2 \ra$$ and we can assume that 
$$ d_{2p}(z_1) = \begin{cases} 
\zeta           & \textup{if\ \ } G=\wP,\cr
(\tau ')^{p} - (v ' )^{p-1}\tau '            & \textup{if\ \ } G=\wwH_t,\cr
\tau ^{p}  & \textup{if\ \ } G=S^1
\end{cases}
\text{ \ and \ } 
d_{2p}(z_2) =\begin{cases} 
\alpha ^{p}-\alpha ^{p-1}\beta + \beta ^{p}           & \textup{if\ \ } G=\wP,\cr
(v ' )^{p}            & \textup{if\ \ } G=\wwH_t,\cr
0  & \textup{if\ \ } G=S^1
\end{cases}$$
where $d_{2p}(z_1)$ and $d_{2p}(z_2)$ are in $E_{2p}^{2p,0}(G)=H^{2p}(BG; \bZ )$.
Hence there exists $z_G$ in $H^{2p-1}(B_{G};\bZ )$ such that
$$H^{2p-1}(B_{G};\bZ )=\la z_G\ra = \bZ \text{\ \ \ and \ \ \ } i^{*}(z_{G})=
\begin{cases} 
 pz_{2}         & \textup{if\ \ } G=\wP,\cr
 pz_{2}         & \textup{if\ \ } G=\wwH_t,\cr
 z_{2}          & \textup{if\ \ } G=S^1
\end{cases} $$ 
where $i\colon K \to B_G$ is the inclusion map. Moreover we have 
$$H^{2p-2}(B_G;\bZ )=H^{2p-2}(B_G;\bZ )=
\begin{cases} 
  \la  \alpha ^{p-1}, \alpha ^{p-2}\beta , \dots,  \beta^{p-1}, \chi_{p-1}  \ra         & \textup{if\ \ } G=\wP,\cr
 \la  (v ' ) ^{p-1}, (v ' ) ^{p-2}\tau '  , \dots,  (\tau ' )^{p-1} \ra           & \textup{if\ \ } G=\wwH_t,\cr
\la \tau ^{p-1} \ra   & \textup{if\ \ } G=S^1
\end{cases} $$
Define $z' _G$ in $H^{2p-2}(B_G,\bZ )$ as follows
$$z'_G =
\begin{cases} 
(\pi_{\wP})^{*}(\chi _{p-1})    & \textup{if\ \ } G=\wP,\cr
(\pi_{\wwH _t})^{*}((\tau ' )^{p-1})    & \textup{if\ \ } G=\wwH_t,\cr
(\pi_{S^1})^{*}(\tau ^{p-1})        & \textup{if\ \ } G=S^1
\end{cases} \text{ and } $$
where $\pi _G:B_G\to BG$. Note that $z' _G$ is a primitive element and generates a $\bZ$ component in $H^{2p-2}(B_G;\bZ )$, where
$$H^{2p-2}(B_G;\bZ ) =
\begin{cases} 
 (\bZ /p )^{\oplus p} \oplus \bZ       & \textup{if\ \ } G=\wP,\cr
 (\bZ /p )^{\oplus p} \oplus \bZ        & \textup{if\ \ } G=\wwH_t,\cr
 \bZ         & \textup{if\ \ } G=S^1
\end{cases} $$
Define the \emph{cohomology fundamental class} $Z _G$ as follows:
$$Z _G = z_G \cup z' _G \in H^{4p-3}(B_G,\bZ )\ .$$ 
The reason for this definition will become clear after Lemma \ref{SpectralSequenceCalculationsLemma0}. In the spectral sequence for $H^*(B_G;\bZ)$, $G = \wwH_t$, the cohomology fundamental class of the manifold
$M_t$ lies in the term $E_{\infty }^{2p-2,2p-1}(G)$. In the formula for this term we see the elements $z_G$ and $z'_G$ described above.

\subsection{Transfers and duality}  \label{Tansfers}

In this section we will see the duality between  
$Z _{S^1}$, $Z _{\wwH _t}$, and $Z_{\wP}$
and $\gamma _{S^1}$, $\gamma _{\wwH _t}$, and elements in $T_{\wP}$
respectively. We will consider the $p$-fold covering maps
$$B_{\wwH _t}\to B_{\wP}\text{ , \ \ } B_{S^1}\to B_{\wwH _t} \text{ , \ \ }B\wwH _t\to B\wP\text{ , and \ \  } BS^1\to B\wwH _t$$
We will  write $\pi ^*$ and $\pi _*$ to denote the natural maps induced in cohomology and homology respectively, and just write $tr$ for the transfer maps both in cohomology and homology. We have 
$$ \pi ^*(z'_{\wwH_t})=z'_{S^1} \text {\ \ \ and \ \ \ }tr(z_{S^1})=z_{\wwH_t}$$
and 
$$ \pi ^*(z_{\wP})=z_{\wwH _t}\text {\ \ \ and \ \ \ }tr(z'_{\wwH_t})=z'_{\wP}$$
Hence we have
$$tr(Z _{S^1})=Z _{\wwH _t}\text{\ \ \  and \ \ \ }tr(Z _{\wwH _t})=Z _{\wP}$$
Note that $\CP^{p-1} \times S^{2p-1} $ is the universal covering of $M_{t}$ and we have the following pull-back diagram. 
$$\xymatrix@R+1pt{ \CP^{p-1} \times S^{2p-1}\ar[r]^(0.64)c\ar[d]& B_{S^1}\ar[d]\\ 
M_{t} \ar[r]^{c_t} & B_{\wwH_{t}}}$$
Hence we have
$$\tr(\gamma _{\wwH_{t}})=\gamma _{S^1}\ .$$
Considering the map $c\colon \CP^{p-1} \times S^{2p-1}\to B_{S^1}$ we have $$c^{*}(Z _{S^1})=A\times B$$ where $A$ is 
the cohomology fundamental class of $\CP^{p-1}$ and $B$ is the cohomology fundamental class of $S^{2p-1}$. This proves that $Z _{S^1}$ is a primitive element in $H^{4p-3}(B_{S^1};\bZ )$. Moreover 
$$\la Z _{S^1}, \gamma _{S^1} \ra=\la Z _{S^1}, c_{*}([\CP^{p-1} \times S^{2p-1}])  \ra=\la A \times B, [\CP^{p-1}] \times [S^{2p-1}]  \ra = 1\ .$$ 
Hence we have
$$\la Z _{\wwH_{t}}, \gamma _{\wwH_{t}}\ra=\la \tr(Z _{S^1}), \gamma _{\wwH_{t}}\ra=\la Z _{S^1}, \tr(\gamma _{\wwH_{t}}) \ra=1$$
and $Z_{\wwH_{t}}$ is a primitive element in $H^{4p-3}(B_{\wwH_{t}};\bZ )$.
Moreover $\gamma_{\wwH_{t}}$, and $\gamma_{S^1}$ are primitive elements in 
$H_{4p-3}(B_{\wwH_{t}};\bZ )$ and $H_{4p-3}(B_{S^1};\bZ )$ respectively. Hence our main calculation is the following: 
Given $\gamma \in T_{\wP}$, we have
$$ \la Z_{\wP}, \gamma\ra= \la \tr(Z_{S^1}, \gamma\ra = \la Z_{S^1}, \tr(\gamma)\ra = \la Z_{S^1}, \gamma_{S^1}\ra = 1$$
since $\tr(\gamma) - \gamma_{S^1}$ is a torsion element.

\subsection{Some spectral sequence calculations} \label{SpectralSequenceCalculations}

\begin{lemma} \label{SpectralSequenceCalculationsLemma0}
For $G=S^1$, $\wwH_ t$, or $\wP $, the differential $d_{2p}\colon E_{2p}^{2p-2,2p-1}( G )\to E_{2p}^{4p-2,0}(G)$ 
is surjective and its kernel is given as follows:
$$E_{\infty }^{2p-2,2p-1}(G)=
\begin{cases} 
\la p z_{2} \cdot \chi _{p-1} \ra    & \textup{if\ \ } G=\wP,\cr
\la p z_{2} \cdot (\tau ' )^{p-1} \ra    & \textup{if\ \ } G=\wwH_t,\cr
\la z_{2} \cdot \tau ^{p-1} \ra   & \textup{if\ \ } G=S^1
\end{cases} $$
\end{lemma}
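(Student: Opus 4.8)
The plan is to analyze the Serre spectral sequence of the fibration $K(\bZ\oplus\bZ, 2p-1) \to B_G \xrightarrow{\pi_G} BG$ in the single relevant bidegree. First I would assemble the $E_2$-page entries: by Lemma \ref{HomologyOfK} (or rather its cohomological counterpart Lemma \ref{CohomologyOfK}) the fiber cohomology $H^*(K;\bZ)$ vanishes rationally and $p$-locally in degrees $2p, \dots, 4p-4$, and $H^{2p-1}(K;\bZ) = \la z_1, z_2\ra$, so on the column $E_2^{*, 2p-1}$ the only differential that can be nonzero on a class supported in total degree $4p-3$ is $d_{2p}$ into $E_{2p}^{4p-2,0} = H^{4p-2}(BG;\bZ)$. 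I would note that $E_{2p}^{2p-2,2p-1}(G) = H^{2p-2}(BG;\bZ)\otimes H^{2p-1}(K;\bZ)$ modulo the relevant Tor-correction, but since $H^{2p-1}(K;\bZ)$ is free this is simply $H^{2p-2}(BG;\bZ)\cdot\la z_1, z_2\ra$, using the explicit generators listed in \S\ref{SectionOnCohB_G}.

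Next I would pin down $d_{2p}$ on this bidegree using that $d_{2p}$ is a derivation over $H^*(BG;\bZ) = E_2^{*,0}$ together with the transgression values $d_{2p}(z_1), d_{2p}(z_2)$ already recorded in \S\ref{SectionOnCohB_G}. Concretely, for $G = S^1$ a class $x\cdot z_i$ with $x \in H^{2p-2}(BS^1;\bZ) = \la \tau^{p-1}\ra$ maps to $x\cdot d_{2p}(z_i)$, so $\tau^{p-1} z_1 \mapsto \tau^{p-1}\cdot\tau^p = \tau^{2p-1}$ and $\tau^{p-1} z_2 \mapsto 0$; thus $d_{2p}$ is surjective onto $E_{2p}^{4p-2,0}(S^1) = H^{4p-2}(BS^1;\bZ) = \la\tau^{2p-1}\ra$ with kernel exactly $\la z_2\cdot\tau^{p-1}\ra$. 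For $G = \wwH_t$ one uses the basis $(v')^{p-1}, (v')^{p-2}\tau', \dots, (\tau')^{p-1}$ of $H^{2p-2}$ together with the relations in $H^*(B\wwH_t;\bZ)$ from Remark \ref{CohomologyOfSubgroups}(2) (in particular $p v' = 0$) to check that the images $x\cdot d_{2p}(z_1) = x\cdot((\tau')^p - (v')^{p-1}\tau')$ and $x\cdot d_{2p}(z_2) = x\cdot(v')^p$, as $x$ ranges over that basis, span all of $H^{4p-2}(B\wwH_t;\bZ)$ and that the kernel is the rank-one summand generated by $p z_2\cdot(\tau')^{p-1}$ (the factor $p$ appears because $(v')^p$ is $p$-torsion while the free part of $H^{4p-2}$ is hit by $(\tau')^{p-1}\cdot d_{2p}(z_1)$). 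The case $G = \wP$ is the same computation pushed up along the restriction $H^*(B\wP;\bZ)\to H^*(B\wwH_t;\bZ)$, using Leary's generators $\alpha,\beta,\chi_i,\zeta$ and the transgressions $d_{2p}(z_1) = \zeta$, $d_{2p}(z_2) = \alpha^p - \alpha^{p-1}\beta + \beta^p$; here $\chi_{p-1}\cdot z_2$ (multiplied by $p$ to land in the kernel) survives, while $\zeta$ and the $\alpha^i\beta^j$ products generate $H^{4p-2}(B\wP;\bZ)$ in the relevant range.

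The main obstacle I anticipate is the bookkeeping on the $\wP$ and $\wwH_t$ pages: one must know $H^{4p-2}(BG;\bZ)$ precisely enough (its rank and $p$-torsion) to verify both surjectivity of $d_{2p}$ and that its kernel is cyclic of the stated form, and this requires handling the relations in Leary's cohomology rings rather than just the generator lists quoted in Theorems \ref{TheoremofLeary} and \ref{TheoremofLearymodp}. The cleanest route is probably to first do $G = S^1$ by hand, then deduce the $\wwH_t$ and $\wP$ cases by naturality of the spectral sequence under the covering maps $B_{S^1}\to B_{\wwH_t}\to B_{\wP}$ (compatible with transfers, as set up in \S\ref{Tansfers}): the transfer carries the $S^1$-kernel generator $z_2\cdot\tau^{p-1}$ to $z_2\cdot(\tau')^{p-1}$ up to the factor $p$ coming from $\tr\circ\pi^* = p$, which is exactly the extra $p$ in the stated answer, and similarly up to $\wP$. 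Once surjectivity is known, the kernel description follows formally since $E_{2p}^{2p-2,2p-1}$ has rank one more than $E_{2p}^{4p-2,0}$ in the relevant $p$-local sense, and no later differentials touch this spot by the vanishing range of Lemma \ref{CohomologyOfK}, so $E_\infty^{2p-2,2p-1}(G) = \ker d_{2p}$ as claimed.
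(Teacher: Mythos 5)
Your primary approach---computing $d_{2p}$ explicitly on the $E_{2p}^{2p-2,2p-1}$ term using the transgression values $d_{2p}(z_1), d_{2p}(z_2)$ and the monomial basis $(v')^s(\tau')^{p-1-s}$ of $H^{2p-2}(BG;\bZ)$---is essentially the paper's proof. The paper handles $S^1$ and $\wwH_t$ exactly this way and leaves the $\wP$ case to the reader (noting it is not used later), so the level of detail you propose matches what is actually written.

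The ``cleanest route'' you sketch in the final paragraph is a nice observation as far as it goes: the factor $p$ in $\la p z_2\cdot(\tau')^{p-1}\ra$ really is the factor from $\tr\circ\pi^* = p$ applied to $\tau^{p-1} = \pi^*((\tau')^{p-1})$, so the transfer does exhibit $p z_2\cdot(\tau')^{p-1}$ as a permanent cycle. But the claim that ``the kernel description follows formally'' from a rank comparison once surjectivity is known is too strong. The source $E_{2p}^{2p-2,2p-1}(\wwH_t)$ has $p$-torsion of $\bF_p$-rank $2(p-1)$, while the target $H^{4p-2}(B\wwH_t;\bZ)$ has $p$-torsion of $\bF_p$-rank $2p-1$ (the target has strictly \emph{more}). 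Surjectivity forces the kernel to have free rank one, but the six-term $\Tor$-sequence obtained by tensoring $0\to \ker \to E_{2p}^{2p-2,2p-1}\to E_{2p}^{4p-2,0}\to 0$ with $\bZ/p$ has Euler characteristic zero for \emph{any} amount of $p$-torsion in the kernel, so dimension counting alone cannot rule out extra $\bZ/p$ summands in $\ker d_{2p}$. To exclude them one must check that $d_{2p}$ is injective on the torsion subgroup of the source---which is precisely the monomial bookkeeping you carry out in the direct computation; the transfer argument by itself does not furnish it. Since your direct computation already does this, the proposal as a whole is sound; just do not rely on the ``formal'' shortcut.
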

\begin{proof}
We have $E_{2p}^{2p-2,2p-1}(S^1)= \la  z_{1} \cdot \tau ^{p-1},z_{2} \cdot \tau ^{p-1} \ra =\bZ ^{\oplus 2} $ 
and $ E_{2p}^{4p-2,0}(S^1)=\la  \tau ^{2p-1} \ra=\bZ$.
So result follows for $G=S^1$ because $d_{2p}( z_{1} \cdot \tau ^{p-1})=\tau ^{2p-1} $ spans $E_{2p}^{4p-2,0}(S^1)$.
We have 
$$E_{2p}^{2p-2,2p-1}(\wwH_{t})=(\bZ /p )^{\oplus 2p} \oplus \bZ ^{\oplus 2}$$ 
given by 
$$ \la  z_{1},z_{2} \ra  \cdot \la  (v ' ) ^{p-1}, (v ' ) ^{p-2}\tau '  , \dots,  (\tau ' )^{p-1} \ra \ .$$
and we have
$$E_{2p}^{4p-2,0}(\wwH_{t})= (\bZ /p )^{\oplus 2p+1}\oplus \bZ $$
given by
$$ \la (v ' ) ^{2p-1}, (v ' ) ^{2p-2}\tau ' , \dots , (\tau ' ) ^{2p-1} \ra \ .$$
The map 
$$d_{2p}\colon E_{2p}^{2p-2,2p-1}( \wwH_{t})\to E_{2p}^{4p-2,0}(\wwH_{t})$$ 
is surjective because the following list of images of $d_{2p}$ will span $E_{2p}^{4p-2,0}(\wwH_{t})$ considered as above:
\begin{itemize}\addtolength{\itemsep}{0.2\baselineskip}
\item $d_{2p}(z_{2}\cdot (v ' ) ^{s} (\tau ')^{p-1-s})=(v ' ) ^{p+s} (\tau ')^{p-1-s}$ for $0 \leq s \leq p-1$
\item $d_{2p}(z_{1}\cdot (v ' ) ^{s} (\tau ')^{p-1-s})=(v ' ) ^{s} (\tau ')^{2p-1-s} + (v ' ) ^{p-1+s} (\tau ')^{p-s}$ for $0 \leq s \leq p-1$
\end{itemize}  
This means we have $$E_{2p+1 }^{2p-2,2p-1}(\wwH_{t})=\la pz_{2}\cdot (\tau ')^{p-1}\ra $$ 
and the results follows for $G=\wwH_ t$,  since $\ker d_{2p}=\la pz_{2} \cdot (\tau ')^{p-1} \ra $. 
The proof for $G=\wP $ is left to reader as it will not be used in this paper.
\end{proof}

\begin{lemma} \label{SpectralSequenceCalculationsLemma1}
$H^{4p-3}(B_{S^1};\bZ )=\la Z_{S^1} \ra \oplus A$ where $A$ is a torsion group with no $p$-torsion.
\end{lemma}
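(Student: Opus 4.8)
The plan is to compute $H^*(B_{S^1};\bZ)$ through degree $4p-3$ directly from the cohomology Serre spectral sequence of the fibration $K \to B_{S^1} \xrightarrow{\pi_{S^1}} BS^1$, with $K = K(\bZ\oplus\bZ,2p-1)$, and to isolate the $p$-primary part. We know $E_2^{n,m}(S^1) = H^n(BS^1;H^m(K;\bZ))$, with $H^*(BS^1;\bZ) = \bZ[\tau]$ concentrated in even degrees. The relevant base degrees are $n = 0, 2, \dots$, and by Lemma \ref{CohomologyOfK} the fiber cohomology $H^m(K;\bZ)$ vanishes (has no $p$-torsion) for $0 < m < 2p-1$, has $H^{2p-1}(K;\bZ) = \bZ\oplus\bZ = \la z_1,z_2\ra$, and $_{(p)}H^m(K;\bZ) = 0$ for $2p \leq m \leq 4p-3$. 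So in total degree $4p-3$, the only $E_2$ terms that can contribute a non-$p$-torsion (or $p$-torsion) class after reduction are: the base term $E_2^{4p-3,0} = H^{4p-3}(BS^1;\bZ) = 0$ (odd degree); the column $m = 2p-1$, giving $E_2^{2p-2,2p-1}(S^1) = H^{2p-2}(BS^1;H^{2p-1}(K)) = \la z_1\cdot\tau^{p-1}, z_2\cdot\tau^{p-1}\ra \cong \bZ^2$; and possibly $p$-torsion coming from $_{(p)}H^m(K;A)$ for $m$ between $2p$ and $4p-3$, which by Lemma \ref{CohomologyOfK}(2),(3) is zero.

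The main computation is then to identify the surviving piece of $E_\infty^{2p-2,2p-1}(S^1)$, and this is exactly Lemma \ref{SpectralSequenceCalculationsLemma0}: the differential $d_{2p}\colon E_{2p}^{2p-2,2p-1}(S^1) \to E_{2p}^{4p-2,0}(S^1)$ sends $z_1\cdot\tau^{p-1} \mapsto \tau^{2p-1}$ (a generator of $E_{2p}^{4p-2,0}(S^1) = \la\tau^{2p-1}\ra \cong \bZ$) and $z_2\cdot\tau^{p-1}\mapsto 0$, so $E_\infty^{2p-2,2p-1}(S^1) = \la z_2\cdot\tau^{p-1}\ra \cong \bZ$, generated by the class whose representative in $H^{4p-3}(B_{S^1};\bZ)$ is $z_{S^1}\cup z'_{S^1} = Z_{S^1}$ (using $i^*(z_{S^1}) = z_2$ and $z'_{S^1} = \pi_{S^1}^*(\tau^{p-1})$ from Section \ref{SectionOnCohB_G}). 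I would check that no later differential can hit or leave this term: incoming differentials $d_r\colon E_r^{2p-2-r,2p-2+r}(S^1)\to E_r^{2p-2,2p-1}(S^1)$ have source with fiber degree $2p-2+r > 2p-1$ and base degree $2p-2-r$, and by Lemma \ref{CohomologyOfK} the source is zero $p$-locally in that range (fiber degree between $2p$ and $4p-3$); outgoing differentials past $d_{2p}$ land in columns of total degree $4p-3$ with positive fiber degree, again zero $p$-locally. So the $E_\infty$ page in total degree $4p-3$, localized at $p$, consists of the single line $E_\infty^{2p-2,2p-1}(S^1) = \la Z_{S^1}\ra \cong \bZ$, while the integral $E_\infty$ may also contain prime-to-$p$ torsion from the $\bZ^2$ before passing to $d_{2p}$ (the cokernel-type contributions) and from $_{(p')}H^*(K)$.

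Putting this together: the associated graded of $H^{4p-3}(B_{S^1};\bZ)$ has one infinite cyclic summand generated by (the class represented by) $Z_{S^1}$, and the rest is torsion of order prime to $p$. Since $Z_{S^1}$ is primitive — established in Section \ref{Tansfers} via $c^*(Z_{S^1}) = A\times B$ pulling back to a generator of $H^{4p-3}(\CP^{p-1}\times S^{2p-1};\bZ)$ — the extension splits off a $\bZ$ summand on $Z_{S^1}$, giving $H^{4p-3}(B_{S^1};\bZ) = \la Z_{S^1}\ra \oplus A$ with $A$ torsion and $p$-torsion-free. The hard part is the careful bookkeeping of which $E_2$ groups in the critical total degree are $p$-torsion-free versus pure torsion and confirming that all differentials into and out of the $E^{2p-2,2p-1}$ slot other than the single $d_{2p}$ vanish; this is where Lemma \ref{CohomologyOfK}(1)--(3) and Lemma \ref{SpectralSequenceCalculationsLemma0} do all the work, and the splitting is then immediate from primitivity of $Z_{S^1}$.
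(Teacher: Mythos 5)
Your proposal is correct and follows essentially the same route as the paper: you use the Serre spectral sequence of $K \to B_{S^1}\to BS^1$, identify the surviving class $Z_{S^1}$ in $E_\infty^{2p-2,2p-1}$ via Lemma \ref{SpectralSequenceCalculationsLemma0}, control the $p$-torsion in the remaining filtration quotients via Lemma \ref{CohomologyOfK}, and split off $\la Z_{S^1}\ra$ using the primitivity established in \S\ref{Tansfers}. The paper phrases the same argument more compactly in terms of the filtration ($F^{2p-1}H^{4p-3}=0$, $F^{2p-2}H^{4p-3}=\la Z_{S^1}\ra$), but the mathematical content is identical.
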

\begin{proof}
By Lemma \ref{SpectralSequenceCalculationsLemma0}
$$F^{2p-1}H^{4p-3}(B_{S^1};\bZ )=0$$ 
and 
$$E_{\infty }^{2p-2,2p-1}(S^1)=\la z_{2} \cdot \tau ^{p-1}\ra \ .$$ 
It is clear that 
$$Z_{S^1}  \in  F^{2p-2}H^{4p-3}(B_{S^1};\bZ )$$
and represents the following generator of the quotient
$$z_{2} \cdot \tau ^{p-1}  \in E_{\infty}^{2p-2,2p-1}(S^1)=F^{2p-2}H^{4p-3}(B_{S^1};\bZ )/F^{2p-1}H^{4p-3}(B_{S^1};\bZ )\ .$$
Hence $F^{2p-2}H^{4p-3}(B_{S^1};\bZ )=\la Z_{S^1} \ra =\bZ $, and the fact $Z_{S^1}$ is a primitive element tells us that
$$H^{4p-3}(B_{S^1};\bZ )=\la Z_{S^1} \ra \oplus A$$
where $A$ is an abelian group. Now by Lemma \ref{CohomologyOfK}, 
we see that $E_{\infty}^{i,4p-3-i}( S^1)$ is a torsion group with no $p$-torsion for $0 \leq i \leq 2p-3$. Hence $A$ is a torsion group with no $p$-torsion.
\end{proof}

\begin{lemma} \label{SpectralSequenceCalculationsLemma2} 
For $G=S^1$ or $\wwH_ t$, we have $_{(p)}H^{4p-2}(B_{G};\bZ )=\bZ /p \oplus \bZ /p$.
\end{lemma}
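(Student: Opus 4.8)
The plan is to compute $_{(p)}H^{4p-2}(B_G;\bZ)$ for $G = S^1$ and $G = \wwH_t$ by reading off the $E_\infty$-page of the cohomology Serre spectral sequence of the fibration $K \to B_G \xrightarrow{\pi_G} BG$ localized at $p$, along the total degree $4p-2$ antidiagonal, and then assembling the associated graded pieces. By Lemma~\ref{CohomologyOfK}, after passing to $_{(p)}(-)$ the only nonzero rows of $E_2$ in the range of total degree $\leq 4p-2$ are $m = 0$ (giving $H^*(BG;\bZ)$), $m = 2p-1$ (giving $H^*(BG;\bZ)\otimes \la z_1,z_2\ra$ together with any $\Tor$-contribution, but $H^{2p-1}(K;\bZ) = \bZ^2$ is free so there is no $\Tor$), and $m = 4p-2$ (giving $_{(p)}H^{4p-2}(K;\bZ) = \bZ \oplus \bZ/p\oplus\bZ/p$ on the fiber). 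So the antidiagonal $n+m = 4p-2$ receives contributions only from $(n,m) = (4p-2,0)$, $(2p-1,2p-1)$, and $(0,4p-2)$; all intermediate entries vanish $(p)$-locally by parts (2)--(3) of Lemma~\ref{CohomologyOfK} combined with the fact that $H^*(BG;\bZ)$ is concentrated in even degrees (for $G=S^1$, $H^{odd}=0$; for $G=\wwH_t$, $H^{odd}(B\wwH_t;\bZ)=0$ by Remark~\ref{CohomologyOfSubgroups}(2), so the entry $(2p-1,0)$ of total degree $2p-1$ and its neighbours vanish and there is nothing to worry about in odd total degree either).

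The next step is to run the differentials. The key input is Lemma~\ref{SpectralSequenceCalculationsLemma0}, which tells us that $d_{2p}\colon E_{2p}^{2p-2,2p-1}(G) \to E_{2p}^{4p-2,0}(G)$ is surjective; I would also need that $d_{2p}\colon E_{2p}^{0,2p-1}(G) = \la z_1,z_2\ra \to E_{2p}^{2p,0}(G) = H^{2p}(BG;\bZ)$ is injective $(p)$-locally (for $G = S^1$, $z_1 \mapsto \tau^p$, $z_2\mapsto 0$, so it is not injective — but the image of $z_2$ is killed and contributes a $\bZ$ to $E_\infty^{0,2p-1}$; for $G = \wwH_t$, $z_1\mapsto (\tau')^p - (v')^{p-1}\tau'$, $z_2\mapsto (v')^p$, and these are independent modulo $p$-torsion-free relations in the sense that their span modulo torsion is as large as possible). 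The point is that the surjectivity of $d_{2p}$ on the $(2p-2,2p-1)$-entry means the entire free-plus-$p$-torsion part of $E_{2p}^{4p-2,0} = H^{4p-2}(BG;\bZ)$ is hit and so $E_\infty^{4p-2,0}$ contributes nothing $(p)$-locally; then one checks no later differential can hit or leave $E^{0,4p-2}$ ($(p)$-locally $d_r$ out of $(0,4p-2)$ lands in $(r, 4p-1-r)$, which for $r \leq 2p-1$ is in a vanishing row/degree and for $r \geq 2p$ lands in total degree $\geq 4p-1$; and differentials into $(0,4p-2)$ come from negative columns, impossible). Hence $_{(p)}E_\infty^{0,4p-2} = {}_{(p)}H^{4p-2}(K;\bZ) = \bZ\oplus\bZ/p\oplus\bZ/p$, and I would need to show the surviving $\bZ$ in row $4p-2$ on the fiber (coming from $z_1\cup z_2$) is actually killed or accounted for — wait, rather, I would check the contributions in the middle entry $(2p-1,2p-1)$ and the top entry $(4p-2,0)$ both die $(p)$-locally, leaving only $_{(p)}E_\infty^{0,4p-2}$, and argue that the $\bZ$-summand $z_1\cup z_2$ there is killed by an incoming $d$ (this is exactly the transgression-type behaviour forced by $d_{2p}(z_i)$ being nonzero), so that $_{(p)}E_\infty^{0,4p-2} = \bZ/p\oplus\bZ/p$ generated by $\delta P^1(\bar z_1)$ and $\delta P^1(\bar z_2)$.

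So concretely the steps are: (1) identify the three antidiagonal $E_2$-entries $(p)$-locally using Lemmas~\ref{CohomologyOfK} and the cohomology of $BG$; (2) use Lemma~\ref{SpectralSequenceCalculationsLemma0} and the injectivity ($(p)$-locally, modulo the $z_2\mapsto 0$ degeneracy for $S^1$) of $d_{2p}$ on $E_{2p}^{0,2p-1}$ to show the $(4p-2,0)$-entry dies and the $(2p-1,2p-1)$-entry reduces appropriately; (3) track the fate of $z_1\cup z_2 \in H^{4p-2}(K)$: it transgresses (or is hit by a product of lower differentials) so that the surviving part of $E_\infty^{0,4p-2}$ is exactly $\la \delta P^1(\bar z_1), \delta P^1(\bar z_2)\ra = \bZ/p\oplus\bZ/p$; (4) conclude there are no extension problems affecting $_{(p)}(-)$ since the only surviving antidiagonal group is the fiber term $\bZ/p\oplus\bZ/p$. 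The main obstacle I anticipate is step (3): bookkeeping precisely how the degree-$(4p-2)$ classes $z_1\cup z_2$, $\delta P^1(\bar z_i)$ behave under the spectral sequence differentials — in particular confirming that $z_1\cup z_2$ does not survive while the Bockstein-of-$P^1$ classes do — which requires the multiplicative structure and the Kudo transgression theorem relating $d_{2p}$ to $P^1$ and the Bockstein. The $S^1$ case is cleaner (the base has no $p$-torsion, $z_2$ transgresses to $0$) and should be done first as a warm-up; the $\wwH_t$ case then follows by the same pattern with the extra polynomial generators $v'$ in the base, all of which only enrich the entries that are already being killed by $d_{2p}$.
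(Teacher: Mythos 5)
Your proposal follows the same approach as the paper: run the cohomology Serre spectral sequence of $K \to B_G \to BG$ $p$-locally along total degree $4p-2$, use Lemma~\ref{SpectralSequenceCalculationsLemma0} and Lemma~\ref{CohomologyOfK} to see that every entry $E_\infty^{r,4p-2-r}$ with $r\geq 1$ vanishes $p$-locally, and identify $_{(p)}H^{4p-2}(B_G;\bZ)$ with the surviving $p$-torsion $\bZ/p\oplus\bZ/p$ at $(0,4p-2)$ after the $\bZ$-summand $z_1\cup z_2$ transgresses to an element of infinite order under $d_{2p}$. The paper's own proof is considerably terser (``one can see that$\ldots$''), leaving implicit exactly the bookkeeping in your step (3) — the fate of $z_1\cup z_2$ and of $\delta P^1(\bar z_i)$ — that you rightly flag as the main thing to nail down.
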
 
\begin{proof}
Note that $H^{2p-1}(B_{G};\bZ )=0$ hence by Lemma \ref{SpectralSequenceCalculationsLemma0} and Lemma \ref{CohomologyOfK}, one can see that $E_{\infty}^{r,4p-2-r}(G)$ has no $p$-torsion for $1\leq r \leq 4p-2$, and the $p$-torsion part of $E_{\infty}^{0,4p-2}(G)$ is $\bZ /p \oplus \bZ /p$.
\end{proof}

\begin{lemma} \label{SpectralSequenceCalculationsLemma3} 
The $p$-torsion subgroup of $H_{4p-3}(B_{\wwH_{t}}; \bZ ))$ is contained in the image of the natural map
$i_*\colon H_{4p-3}(K; \bZ ) \to H_{4p-3}(B_{\wwH_{t}}; \bZ ))$.
\end{lemma}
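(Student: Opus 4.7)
The plan is to deduce this homological statement from the cohomological content of Lemma \ref{SpectralSequenceCalculationsLemma2}, via the universal coefficient theorem applied to the fiber inclusion $i\colon K \to B_{\wwH_t}$. I would not try to analyze the integral homology Serre spectral sequence directly, because the source $E^2_{4p-3,0} = H_{4p-3}(B\wwH_t;\bZ) \cong (\bZ/p)^{2p-1}$ carries far more $p$-torsion than ultimately survives, and untangling the relevant $d^{2p}$-differential is delicate. The cohomological side is much cleaner.

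First I would recall that the edge homomorphism in the cohomology Serre spectral sequence is $i^*\colon H^{4p-2}(B_{\wwH_t};\bZ) \to H^{4p-2}(K;\bZ)$, with kernel precisely $F^1H^{4p-2}$ in the Serre filtration. The proof of Lemma \ref{SpectralSequenceCalculationsLemma2} already shows that $E_\infty^{r,4p-2-r}(\wwH_t)$ has no $p$-torsion for each $r \geq 1$, and since $F^1$ is an iterated extension of these graded pieces, $F^1$ itself has no $p$-torsion. Consequently $i^*$ is injective on the $p$-torsion subgroup of $H^{4p-2}(B_{\wwH_t};\bZ)$.

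Next I would invoke the universal coefficient theorem in the natural form
$$0 \to \Ext(H_{n-1}(X),\bZ) \to H^n(X;\bZ) \to \Hom(H_n(X),\bZ) \to 0,$$
in which the $\Ext$-term is exactly the torsion subgroup of $H^n(X;\bZ)$. Taking $n=4p-2$ and applying naturality to $i$, the restriction of $i^*$ to $p$-torsion becomes $\Ext(i_*,\bZ)$ acting on $\Ext(H_{4p-3}(-),\bZ)$. For finitely generated torsion abelian groups $\Ext(-,\bZ) \cong \Hom(-,\bQ/\bZ)$ is Pontryagin duality, which exchanges injections and surjections, so the injectivity of $i^*$ on $p$-torsion produces surjectivity of $i_*\colon H_{4p-3}(K;\bZ) \to H_{4p-3}(B_{\wwH_t};\bZ)$ on $p$-torsion, giving the statement.

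The principal obstacle is the UCT-duality step: one must check that the torsion part of $i^*$ really agrees, canonically and naturally, with the Pontryagin dual of the torsion part of $i_*$. Once this is granted, all the spectral sequence input is already contained in Lemma \ref{SpectralSequenceCalculationsLemma2}, and nothing new needs to be computed.
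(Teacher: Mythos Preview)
Your argument is correct, and it takes a different route from the paper's own proof, so a short comparison is worthwhile.

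The paper works inside the homology Serre spectral sequence. It invokes Lemma~\ref{SpectralSequenceCalculationsLemma0} together with an (unelaborated) ``duality'' between the cohomology and homology spectral sequences to conclude that the torsion subgroup of $H_{4p-3}(B_{\wwH_t};\bZ)$ coincides with the filtration piece $F_{2p-3}$; then Lemma~\ref{HomologyOfK} pushes the $p$-torsion further down to $F_0=\Image(i_*)$. In other words, the paper dualizes term by term to control the homological $E^\infty_{2p-2,2p-1}$ and $E^\infty_{4p-3,0}$, and then reads off the filtration.

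You instead remain on the cohomology side and dualize only once, globally. You extract from the proof of Lemma~\ref{SpectralSequenceCalculationsLemma2} that $F^1H^{4p-2}(B_{\wwH_t};\bZ)=\ker(i^*)$ has no $p$-torsion, so $i^*$ is injective on $p$-torsion in degree $4p-2$; a single application of the universal coefficient theorem (which identifies the torsion of $H^{4p-2}$ naturally with $\Ext(H_{4p-3},\bZ)$) together with Pontryagin duality then converts this into surjectivity of $i_*$ on $p$-torsion in degree $4p-3$. This avoids any discussion of the homology filtration and makes the duality step completely explicit, at the cost of citing Lemma~\ref{SpectralSequenceCalculationsLemma2} (and its proof) rather than Lemma~\ref{SpectralSequenceCalculationsLemma0}. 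The paper's route yields a slightly sharper intermediate statement (all torsion, not just $p$-torsion, sits in $F_{2p-3}$), but your version is more self-contained given what has already been established.
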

\begin{proof}
By duality and Lemma \ref{SpectralSequenceCalculationsLemma0}  we see that the torsion part of $H_{4p-3}(B_{\wwH_{t}}; \bZ ))$ is equal to
$F_{2p-3}H_{4p-3}(B_{\wwH_{t}};\bZ )$. Hence the results follows from Lemma \ref{HomologyOfK}.
\end{proof}

\begin{lemma} \label{SpectralSequenceCalculationsLemma4} 
The natural map $i_*\colon H_{2p-1}(K;\cy p) \to H_{2p-1}(B_{\wwH_t};\cy p)$ is zero.
\end{lemma}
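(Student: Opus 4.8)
The plan is to identify the image of $i_*$ with an $E^\infty$-term of the homology Serre spectral sequence of the fibration $K \to B_{\wwH_t} \xrightarrow{\pi_{\wwH_t}} B\wwH_t$ with $\cy p$-coefficients, and then to show this term vanishes because the corresponding transgression is surjective. By the description of the filtration in Section~\ref{SectionOnHomologySerreSpectralSequence}, the bottom piece $F_0 H_{2p-1}(B_{\wwH_t};\cy p)$ equals $\Image\{H_{2p-1}(K;\cy p)\to H_{2p-1}(B_{\wwH_t};\cy p)\}$, since $B_{\wwH_t}^{\{0\}}$ is the fiber $K$; hence it is enough to prove that $E^\infty_{0,2p-1}(\wwH_t,\cy p) = 0$.

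First I would determine which differentials can touch the corner term $E^2_{0,2p-1}(\wwH_t,\cy p) = H_{2p-1}(K;\cy p) = \cy p \oplus \cy p$ (Lemma~\ref{HomologyOfK}). Since $K = K(\bZ\oplus\bZ,2p-1)$ is $(2p-2)$-connected, its reduced homology vanishes in degrees $1,\dots,2p-2$; consequently the groups $E^r_{0,2p-1}$ and $E^r_{2p,0} = H_{2p}(B\wwH_t;\cy p)$ are unchanged for $2 \le r \le 2p$, and the only possibly nonzero differential into or out of the position $(0,2p-1)$ is the transgression $d^{2p}\colon E^{2p}_{2p,0} \to E^{2p}_{0,2p-1}$. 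Hence $E^\infty_{0,2p-1}(\wwH_t,\cy p)$ is the cokernel of $d^{2p}$, and the lemma reduces to the surjectivity of $d^{2p}$.

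Then I would dualize. With field coefficients the homology and cohomology Serre spectral sequences of the fibration are dual, and under this duality $d^{2p}$ is the transpose of the cohomology transgression $d_{2p}\colon H^{2p-1}(K;\cy p) \to H^{2p}(B\wwH_t;\cy p)$; so it suffices to show this $d_{2p}$ is injective. By Lemma~\ref{CohomologyOfK}, $H^{2p-1}(K;\cy p) = \la \bar z_1,\bar z_2\ra$, and naturality of mod~$p$ reduction applied to the integral transgression values over $\wwH_t$ recorded in Section~\ref{SectionOnCohB_G} gives $d_{2p}(\bar z_1) = \bar\tau^{\,p} - v^{p-1}\bar\tau$ and $d_{2p}(\bar z_2) = v^p$. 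By Remark~\ref{CohomologyOfSubgroups}(3), the degree-$2p$ part of $H^*(B\wwH_t;\cy p) = \bF_p[\bar\tau] \otimes (\Lambda(u)\otimes\bF_p[v])$ has $\cy p$-basis $\{\bar\tau^{\,j}v^{\,p-j} : 0 \le j \le p\}$, and the two classes $v^p = \bar\tau^{\,0}v^{\,p}$ and $\bar\tau^{\,p} - \bar\tau\, v^{\,p-1}$ have disjoint support in this basis (here $p \geq 3$), hence are linearly independent. Therefore $d_{2p}$ is injective, $d^{2p}$ is surjective, and $\Image(i_*) = E^\infty_{0,2p-1}(\wwH_t,\cy p) = 0$.

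The only real content beyond spectral-sequence bookkeeping is the verification that no differential other than the transgression $d^{2p}$ meets the corner $(0,2p-1)$ — which rests entirely on the connectivity of $K$ and the low-degree (co)homology computed in Lemmas~\ref{HomologyOfK} and~\ref{CohomologyOfK} — together with the identification of $d_{2p}(\bar z_i)$ with the mod~$p$ reductions of the integral transgression values, which is where the explicit choice of $k$-invariant over $\wwH_t$ is used. Neither step presents a genuine obstacle, so I expect the main care to go into stating the homology/cohomology duality correctly.
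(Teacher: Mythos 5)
Your proof is correct and follows essentially the same route as the paper: pass (via duality of the homology and cohomology Serre spectral sequences with $\cy p$-coefficients) to the statement that the transgression $d_{2p}\colon E_{2p}^{0,2p-1}(\wwH_t,\cy p)\to E_{2p}^{2p,0}(\wwH_t,\cy p)$ is injective, and then verify this from the explicit $k$-invariant. The paper's own proof is briefer and only asserts that $\bar z_1$ and $\bar z_2$ each transgress to nonzero elements, which as stated is slightly weaker than injectivity; your verification that the two mod-$p$ transgression values $v^p$ and $\bar\tau^p - \bar\tau v^{p-1}$ have disjoint support in the monomial basis of $H^{2p}(B\wwH_t;\cy p)$ is the extra care needed to actually conclude linear independence, and it is the right thing to check.
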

\begin{proof}
It is clear that $i^*\colon H^{2p-1}(B_{\wwH_t};\cy p)\to H^{2p-1}(K;\cy p)$ is zero. Because both $\bar{z}_{1}$ and $\bar{z}_{2}$ in 
$$E^{0,2p-1}_{2}(\wwH_t,\cy p)=H^{2p-1}(K;\cy p)=\la \bar{z}_{1} ,\bar{z}_{2}\ra=\cy p \oplus \cy p$$ trangresses to nonzero elements in $E^{2p,0}_{2}(\wwH_t,\cy p)$.
\end{proof}

\subsection{The subset $T_{\wP}$ is non-empty}
Let $\tr\colon H_{4p-3}(B_{\wP}; \bZ )\to H_{4p-3}(B_{S^{1}}; \bZ )$ 
denote the transfer map.
\begin{lemma} \label{TransferFromHtoS}
Let $\gamma '$ in $H_{4p-3}(B_{\wP}; \bZ )$ be an element such that $\la Z _{S^1}, \tr(\gamma ')\ra=1$. 
Then there exists an integer $N_{\gamma '}$ such that $p(1-p^{2}N_{\gamma '})(\tr(\gamma ')-\gamma _{S^1})=0$. 
\end{lemma}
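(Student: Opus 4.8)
The plan is to leverage the duality between the cohomology fundamental class $Z_{S^1}$ and the homology of $B_{S^1}$, together with the structure of $H_{4p-3}(B_{S^1};\bZ)$ computed in Lemma \ref{SpectralSequenceCalculationsLemma1}. The key point is that by Lemma \ref{SpectralSequenceCalculationsLemma1} we have a direct sum decomposition $H^{4p-3}(B_{S^1};\bZ) = \la Z_{S^1}\ra \oplus A$ with $A$ torsion of order prime to $p$; dually, $H_{4p-3}(B_{S^1};\bZ) = \bZ\,\sigma \oplus A'$ where $\sigma$ is a primitive class dual to $Z_{S^1}$ (so $\la Z_{S^1},\sigma\ra = 1$) and $A'$ is torsion prime to $p$. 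In particular, $\gamma_{S^1}$ satisfies $\la Z_{S^1},\gamma_{S^1}\ra = 1$ by the computation in Section \ref{Tansfers}, so $\gamma_{S^1} = \sigma + (\text{element of }A')$, and likewise any $\gamma'$ with $\la Z_{S^1}, \tr(\gamma')\ra = 1$ has $\tr(\gamma') = \sigma + a$ for some $a \in A'$.

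First I would fix such a decomposition and write $\tr(\gamma') - \gamma_{S^1} = a_1 - a_2 \in A'$, which is already torsion of order prime to $p$. This, however, is weaker than the claimed relation: the lemma asserts that after multiplying by $p$ and by a specific unit-like factor $(1 - p^2 N_{\gamma'})$, the element is killed. So the real content is to identify the order of this torsion element more precisely — namely to show that $p \cdot (\tr(\gamma') - \gamma_{S^1})$ lies in a subgroup on which multiplication by $p^2$ acts invertibly, so that one can choose $N_{\gamma'}$ with $p^2 N_{\gamma'} \equiv 1$ modulo the order of $p\cdot(\tr(\gamma') - \gamma_{S^1})$. Concretely: let $m$ be the order of $p\cdot(\tr(\gamma') - \gamma_{S^1})$; since this element lies in $A'$ (torsion prime to $p$), $\gcd(m,p) = 1$, hence $\gcd(m, p^2) = 1$, so $p^2$ is invertible mod $m$ and we may pick an integer $N_{\gamma'}$ with $p^2 N_{\gamma'} \equiv 1 \pmod m$. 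Then $(1 - p^2 N_{\gamma'})$ is divisible by $m$, so $(1 - p^2 N_{\gamma'}) \cdot p\cdot(\tr(\gamma') - \gamma_{S^1}) = 0$, which is exactly the assertion.

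The main obstacle — and the step requiring genuine care — is justifying that $\tr(\gamma') - \gamma_{S^1}$ (equivalently $p$ times it) actually lands in the prime-to-$p$ torsion part $A'$, rather than merely being some torsion element that could a priori have $p$-torsion. This is where one must use that both $\tr(\gamma')$ and $\gamma_{S^1}$ have the \emph{same} pairing value $1$ against the primitive class $Z_{S^1}$: subtracting them kills the free part, landing the difference in the torsion subgroup of $H_{4p-3}(B_{S^1};\bZ)$, and that torsion subgroup is $A'$ by Lemma \ref{SpectralSequenceCalculationsLemma1} (using the no-$p$-torsion statement there, which in turn rests on Lemma \ref{CohomologyOfK} and the spectral sequence computation of Lemma \ref{SpectralSequenceCalculationsLemma0}). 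Once that is in hand, the arithmetic with $N_{\gamma'}$ is routine. I would present the argument in the order: (1) recall the splitting and dualize it; (2) observe both classes pair to $1$ with $Z_{S^1}$, so their difference is torsion prime to $p$; (3) do the modular arithmetic to produce $N_{\gamma'}$.
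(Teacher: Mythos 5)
There is a genuine gap: your dualization of Lemma~\ref{SpectralSequenceCalculationsLemma1} is incorrect, and it leads you to an over-strong (and in fact false) intermediate claim.

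Lemma~\ref{SpectralSequenceCalculationsLemma1} says that $H^{4p-3}(B_{S^1};\bZ)=\la Z_{S^1}\ra\oplus A$ with $A$ torsion prime to $p$. By the Universal Coefficient Theorem, the torsion subgroup of $H^{n}$ is the torsion subgroup of $H_{n-1}$, so this controls the torsion of $H_{4p-4}(B_{S^1};\bZ)$, \emph{not} of $H_{4p-3}(B_{S^1};\bZ)$; the splitting tells you only that $H_{4p-3}(B_{S^1};\bZ)$ has free rank one, so that pairing to zero against $Z_{S^1}$ detects torsion. To identify the torsion of $H_{4p-3}(B_{S^1};\bZ)$ you must instead look at $H^{4p-2}(B_{S^1};\bZ)$, and by Lemma~\ref{SpectralSequenceCalculationsLemma2} its $p$-torsion is $\bZ/p\oplus\bZ/p$. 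Thus $H_{4p-3}(B_{S^1};\bZ)$ \emph{does} contain $p$-torsion, of exponent $p$, and there is no reason for $\tr(\gamma')-\gamma_{S^1}$ to avoid it. Your proposal concludes that $\tr(\gamma')-\gamma_{S^1}$ already lies in the prime-to-$p$ torsion; if that were so, the factor $p$ in the lemma's conclusion would be superfluous, and you could prove $(1-p^2N_{\gamma'})(\tr(\gamma')-\gamma_{S^1})=0$ outright, which is not claimed and is not available. The factor $p$ in the statement is precisely there to annihilate the $(\bZ/p)^{\oplus 2}$ piece; only after that does the element have order prime to $p$, and only then does your modular arithmetic with $N_{\gamma'}$ apply.

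The correct line is: (1) $\la Z_{S^1},\tr(\gamma')-\gamma_{S^1}\ra=0$ forces $\tr(\gamma')-\gamma_{S^1}$ to be torsion (this part you have right); (2) by UCT and Lemma~\ref{SpectralSequenceCalculationsLemma2}, the $p$-torsion of $H_{4p-3}(B_{S^1};\bZ)$ has exponent $p$, so $p\cdot(\tr(\gamma')-\gamma_{S^1})$ has order $m$ with $\gcd(m,p)=1$; (3) choose $N_{\gamma'}$ with $p^2N_{\gamma'}\equiv 1 \pmod m$. Your step (3) is fine; it is step (2) that needs repair. (Incidentally, the paper's own proof text cites Lemma~\ref{SpectralSequenceCalculationsLemma1} at the end where it means Lemma~\ref{SpectralSequenceCalculationsLemma2} --- that is a slip in the reference, not in the mathematics.)
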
 
\begin{proof} 
First note that $\la Z _{S^1}, \tr(\gamma ')-\gamma _{S^1}\ra=0$ since $\la Z _{S^1}, \gamma _{S^1}\ra=1$. Hence
$\tr(\gamma ')-\gamma _{S^1}$ is a torsion element, by the Universal Coefficient Theorem and Lemma \ref{SpectralSequenceCalculationsLemma1}. Hence it is enough to prove that the order of $p\cdot (\tr(\gamma ')-\gamma _{S^1})$ is relatively prime to $p$. But this is clear as the $p$-torsion of part of $H_{4p-3}(B_{S^{1}}; \bZ )$ is same as the $p$-torsion part of $H^{4p-2}(B_{S^{1}}; \bZ )$, which is $\bZ /p \oplus \bZ /p$ by Lemma \ref{SpectralSequenceCalculationsLemma1}.
\end{proof}

\begin{theorem}
\label{ExistenceOfPrimitiveElementInHomologyLemma} The set $T_{\wP}$ is not empty. Any $\gamma \in T_{\wP}$ is a primitive element of infinite order in $H_{4p-3}(B_{\wP}; \bZ )$. 
\end{theorem}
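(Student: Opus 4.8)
The statement has two parts: (a) $T_{\wP}$ is non-empty, and (b) every element of $T_{\wP}$ is a primitive element of infinite order. For part (b), assuming (a) is established, the argument is essentially already in hand: for $\gamma \in T_{\wP}$, by definition $p\cdot(\tr(\gamma) - \gamma_{S^1}) = 0$, so $\tr(\gamma) - \gamma_{S^1}$ is a $p$-torsion element of $H_{4p-3}(B_{S^1};\bZ)$; hence $\la Z_{S^1}, \tr(\gamma)\ra = \la Z_{S^1}, \gamma_{S^1}\ra = 1$ since $Z_{S^1}$ is primitive (so its pairing kills torsion). By the naturality of the transfer/pushforward pairing worked out in \S\ref{Tansfers}, $\la Z_{\wP}, \gamma \ra = \la \tr(Z_{S^1}), \gamma\ra = \la Z_{S^1}, \tr(\gamma)\ra = 1$, which forces $\gamma$ to have infinite order; and in fact primitivity of $\gamma$ will follow from primitivity of $Z_{\wP}$ together with the pairing being unimodular on the relevant summand, once we know $Z_{\wP}$ is primitive --- which itself should be deduced from $tr(Z_{\wwH_t}) = Z_{\wP}$ and primitivity of $Z_{\wwH_t}$, or more directly from a spectral-sequence argument paralleling Lemma \ref{SpectralSequenceCalculationsLemma1} over $\wP$.

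The real content is part (a), non-emptiness. The plan is to produce the required $\gamma$ by a transfer/pushforward argument starting from the geometric model $M_t$. First I would take $\gamma_{\wwH_t} = (c_t)_*[M_t] \in H_{4p-3}(B_{\wwH_t};\bZ)$ and push it forward along $B_{\wwH_t} \to B_{\wP}$ to get a candidate class $\gamma' = \pi_*(\gamma_{\wwH_t}) \in H_{4p-3}(B_{\wP};\bZ)$. By the commuting square of covering maps in \S\ref{Tansfers} and the formula $\tr(\gamma_{\wwH_t}) = \gamma_{S^1}$, together with the standard identity $\tr \circ \pi_* = 1 + (\text{deck transformations})$ for a $p$-fold cover, one computes $\tr(\gamma')$ in terms of $\gamma_{S^1}$ and its images under the deck group of $B_{S^1} \to B_{\wP}$. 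Then I would invoke Lemma \ref{TransferFromHtoS}: since $\la Z_{S^1}, \tr(\gamma')\ra = \la Z_{S^1}, \gamma_{S^1}\ra = 1$ (the pairing being insensitive to the deck-translated terms, which are themselves $p$-torsion-related corrections), that lemma yields an integer $N$ with $p(1 - p^2 N)(\tr(\gamma') - \gamma_{S^1}) = 0$. The element $1 - p^2 N$ is a unit mod $p$; multiplying $\gamma'$ by a suitable integer $\equiv (1-p^2N)^{-1}$ modulo the order of the torsion, or rather adjusting by an appropriate multiple, I would produce $\gamma \in H_{4p-3}(B_{\wP};\bZ)$ with $p(\tr(\gamma) - \gamma_{S^1}) = 0$ exactly, i.e. $\gamma \in T_{\wP}$.

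**The main obstacle.** The delicate point is controlling the correction terms coming from the deck group when passing between $\pi_*$ and $\tr$: one must verify that $\tr(\gamma') - \gamma_{S^1}$ genuinely lands in the $p$-torsion subgroup $\bZ/p \oplus \bZ/p$ of $H_{4p-3}(B_{S^1};\bZ)$ identified in Lemma \ref{SpectralSequenceCalculationsLemma1}, and does not acquire an infinite-order or prime-to-$p$ component that would obstruct rescaling. This is exactly what Lemma \ref{TransferFromHtoS} is designed to handle, so the proof should reduce to checking its hypothesis $\la Z_{S^1}, \tr(\gamma')\ra = 1$ and then doing elementary arithmetic in the finite group $\bZ/p \oplus \bZ/p$ to clear the unit $1-p^2N$. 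I would also need to confirm that the resulting $\gamma$, after rescaling, still satisfies the pairing $\la Z_{\wP}, \gamma\ra = 1$ so that it is primitive --- but this is automatic from the transfer identities since rescaling to fix the torsion relation does not disturb the infinite-order part detected by $Z_{\wP}$. The whole argument is a careful bookkeeping of transfers, pushforwards, and the torsion calculations of \S\ref{SpectralSequenceCalculations}, with no new geometric input beyond the models $M_t$.
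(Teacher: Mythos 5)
Your argument for part (b) — that any $\gamma \in T_{\wP}$ pairs with $Z_{\wP}$ to give $1$ via the transfer identities, forcing infinite order and primitivity — is in line with the paper's reasoning, and the role of Lemma \ref{TransferFromHtoS} in clearing prime-to-$p$ torsion is exactly right. But your approach to part (a), non-emptiness, has a genuine gap that makes it fail.

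You propose to take $\gamma' = \pi_*(\gamma_{\wwH_t})$ where $\pi\colon B_{\wwH_t} \to B_{\wP}$ is the degree-$p$ cover, and then to verify $\la Z_{S^1}, \tr(\gamma')\ra = 1$. This pairing does \emph{not} equal $1$. Composing the $p^2$-fold transfer $\tr\colon H_*(B_{\wP}) \to H_*(B_{S^1})$ with $\pi_*$, and using that $S^1$ is central (so transfers commute with deck transformations), one finds $\tr(\gamma') = \sum_{g \in \wP/\wwH_t} g_*(\gamma_{S^1})$, a sum of $p$ terms. Since $Z_{S^1}$ is invariant under the deck action (or at minimum pairs to $1$ with each deck translate of $\gamma_{S^1}$, as those translates can differ from $\gamma_{S^1}$ only by torsion), the pairing is $p$, not $1$. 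Equivalently, $\la Z_{\wP}, \pi_*(\gamma_{\wwH_t})\ra = \la \pi^*(Z_{\wP}), \gamma_{\wwH_t}\ra = \la \pi^*\tr(Z_{\wwH_t}), \gamma_{\wwH_t}\ra = p$, since $\pi^*\circ\tr$ is the norm. Your parenthetical remark that the deck-translated terms are "$p$-torsion-related corrections" to which the pairing is insensitive is the crux of the error: the pairing counts those terms, giving $p$. Consequently $\gamma'$ is not primitive (it is $p$ times something plus torsion), the hypothesis of Lemma \ref{TransferFromHtoS} is not met, and no integer rescaling can repair this — you would need to divide by $p$, which is not available in homology. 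The paper sidesteps the issue entirely: it produces $\gamma'$ by the Universal Coefficient Theorem directly from the primitivity of $Z_{\wP}$ in $H^{4p-3}(B_{\wP};\bZ)$ (established in \S\ref{Tansfers} via $\tr(Z_{\wwH_t}) = Z_{\wP}$), guaranteeing $\la Z_{\wP}, \gamma'\ra = 1$ from the start, and only then uses the pushforward $\pi_*$ (along $B_{S^1} \to B_{\wP}$) to correct $\gamma'$ by a torsion term so as to land in $T_{\wP}$.
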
 
\begin{proof} 
Let $\tr$ denote the (co)homology transfer associated to the covering map 
$B_{S^1} \xrightarrow{\pi } B_{\wP}\ .$ 
We know that $\tr(Z_{S^1})=Z_{\wP}$ and $Z_{\wP}$ is a primitive element in $H^{4p-3}(B_{\wP };\bZ )$. Hence by the Universal Coefficient Theorem there exists a primitive element $\gamma '$ in $H_{4p-3}(B_{\wP}; \bZ )$ such that $\tr(\gamma ')$ is a primitive element in $H_{4p-3}(B_{S^1}; \bZ )$ and 
$$\la Z_{\wP}, \gamma '\ra=1 \text{  and  }\la Z_{S^1}, \tr(\gamma ')\ra=1\ .$$ 
Take $N_{\gamma '}$ as in Lemma \ref{TransferFromHtoS}. 
Define $$\gamma _{\wP}= \gamma ' - N_{\gamma '}(\pi )_{*}(\tr(\gamma ')-\gamma _{S^1})\ .$$
Then $ \gamma _{\wP}$ is in $T_{\wP}$, because by Lemma \ref{TransferFromHtoS} we have
$$ p\cdot (\tr(\gamma _{\wP})-\gamma _{S^1} )=p(\tr(\gamma ' - N_{\gamma '}(\pi )_{*}(\tr(\gamma ')-\gamma _{S^1}) ) -\gamma _{S^1} )=p(1-p^{2}N_{\gamma '})(\tr(\gamma ')-\gamma _{S^1})=0$$ 
Now, take any $\gamma $ in $T_{\wP}$. Suppose that $\gamma = r\gamma_1$, for some  $\gamma_1$ in $H_{4p-3}(B_{\wP}; \bZ )$. Then we would have $p\cdot(\gamma _{S^{1}}- r\cdot \tr(\gamma_1 ))=0$.  But $\la Z_{S^1}, \gamma _{S^{1}}\ra = 1$. Hence $r=\pm 1$.
\end{proof}

\begin{proposition}
\label{HomologyOfB_H} Let $\tr\colon H_{4p-3}(B_{\wP}; \bZ )\to H_{4p-3}(B_{\wwH_{t}}; \bZ )$ denote the transfer map. Then any $\gamma$ in $T_{\wP}$ satisfies the following equation $ p(\tr(\gamma) - \gamma _{\wwH_{t}})=0 $.
\end{proposition}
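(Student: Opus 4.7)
The plan is to exploit the factorization of the $p^2$-fold cover $B_{S^1}\to B_{\wP}$ through the intermediate $p$-fold coverings $B_{S^1}\to B_{\wwH_t}\to B_{\wP}$. Write $\tr$ for the transfer $H_{4p-3}(B_{\wP};\bZ)\to H_{4p-3}(B_{\wwH_t};\bZ)$ in the statement, and $\tr'$ for the transfer $H_{4p-3}(B_{\wwH_t};\bZ)\to H_{4p-3}(B_{S^1};\bZ)$. Functoriality of the homology transfer identifies $\tr'\circ\tr$ with the transfer of the composite $p^2$-fold cover used in the definition of $T_{\wP}$ (Definition~\ref{def: define TsubGamma}).

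Set $\delta=\tr(\gamma)-\gamma_{\wwH_t}$. Using the identity $\tr'(\gamma_{\wwH_t})=\gamma_{S^1}$ recorded in Section~5.5, I compute $\tr'(\delta)=(\tr'\circ\tr)(\gamma)-\gamma_{S^1}$; since $\gamma\in T_{\wP}$, this gives $p\cdot\tr'(\delta)=0$. Now push forward by the natural map $\pi_*\colon H_{4p-3}(B_{S^1};\bZ)\to H_{4p-3}(B_{\wwH_t};\bZ)$ of the $p$-fold cover. The standard identity $\pi_*\circ\tr'=p\cdot\mathrm{id}$ then yields
\[
0 \;=\; \pi_*\bigl(p\cdot\tr'(\delta)\bigr) \;=\; p^2\,\delta .
\]
In particular the order of $\delta$ divides $p^2$, so $\delta$ lies in the $p$-primary torsion subgroup of $H_{4p-3}(B_{\wwH_t};\bZ)$.

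To conclude, I invoke Lemma~\ref{SpectralSequenceCalculationsLemma3}, which places the $p$-primary torsion of $H_{4p-3}(B_{\wwH_t};\bZ)$ inside the image of $i_*\colon H_{4p-3}(K;\bZ)\to H_{4p-3}(B_{\wwH_t};\bZ)$. By Lemma~\ref{HomologyOfK}(4) the $p$-primary component of $H_{4p-3}(K;\bZ)$ is isomorphic to $\bZ/p\oplus\bZ/p$, so the image of $i_*$ on the $p$-primary part has exponent $p$. Therefore $p\delta=0$, as required.

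The whole argument amounts to a short bookkeeping exercise with transfers combined with the spectral sequence information already developed; the only point that demands care is fixing the direction of the adjunction $\pi_*\circ\tr'=p\cdot\mathrm{id}$ (rather than the other composition, which yields the sum of deck transformations) for the covering $B_{S^1}\to B_{\wwH_t}$.
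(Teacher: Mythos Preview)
Your argument is correct and follows essentially the same route as the paper: factor the $p^2$-fold transfer through $B_{\wwH_t}$, use $\tr'(\gamma_{\wwH_t})=\gamma_{S^1}$ and the identity $\pi_*\circ\tr'=p\cdot\mathrm{id}$ to get $p^2\delta=0$, then invoke the fact that the $p$-primary torsion of $H_{4p-3}(B_{\wwH_t};\bZ)$ has exponent $p$. The only cosmetic difference is that you deduce the exponent-$p$ statement from Lemma~\ref{SpectralSequenceCalculationsLemma3} together with Lemma~\ref{HomologyOfK}(4), whereas the paper appeals (via universal coefficients) to Lemma~\ref{SpectralSequenceCalculationsLemma2}; both are equally valid.
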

\begin{proof}
For any $\gamma $ in $T_{\wP} $ the image of $p\cdot (\tr(\gamma )-\gamma _{\wwH_{t}})$ under the transfer map from $H_{4p-3}(B_{\wwH_{t}}; \bZ )$ to $H_{4p-3}(B_{S^1}; \bZ )$ is $0$, by definition of $T_{\wP} $ and the fact that $\tr(\gamma _{\wwH_{t}})=\gamma _{S^1}$. Note that the kernel of the above transfer map is included in the $p$-torsion part of $H_{4p-3}(B_{\wwH_{t}}; \bZ )$, as $B_{S^1}\to B_{\wwH_{t}}$ is $p$-covering. By Lemma \ref{SpectralSequenceCalculationsLemma1}, the  $p$-torsion part of $H_{4p-3}(B_{\wwH_{t}}; \bZ )$ is $\bZ /p \oplus \bZ /p$ (which has exponent $p$). This proves the result.
\end{proof}

\section{The construction of the bordism element}\label{sec: bordism element}
The next step in our argument is to study the bordism groups 
$\Omega _{4p-3}(B_{\wP, }\nu _{\wP})$ of our normal $(2p-1)$-type. The main result of this section is Theorem \ref{hurewicz}, which proves that the image of
the Hurewicz map 
$$\Omega _{4p-3}(B_{\wP, }\nu _{\wP})\to
H_{4p-3}(B_{\wP};\bZ  )$$
contains the non-empty subset $T_{\wP}$ (see Definition \ref{def: define TsubGamma}). The main difficulty in computing the bordism groups is dealing with $p$-torsion. We will primarily use the James spectral sequence (a variant of the Atiyah-Hirzebruch spectral sequence) associated to the fibration 
$$ * \longrightarrow B_{\wP} \longrightarrow B_{\wP} $$ 
with $E^2$-term
$$E^2_{n,m}(\nu_{\wP}) = H_n(B_{\wP}; \Omega^{fr}_m(\ast)),$$
where the coefficients $\Omega^{fr}_m(\ast)= \pi_m^S$ are the stable homotopy groups of spheres. In our range, the $p$-torsion in $\pi_m^S$ occurs only for 
 $\pi^S_{2p-3}$ and $\pi^S_{4p-5}$, where the $p$-primary part is $\cy p$ (see \cite[p.~5]{ravenel1} and Example \ref{StableStem}). This means that, after localizing at $p$, there are only two possibly non-zero differentials with source at the
 $(4p-3,0)$-position, namely $d_{2p-2}$ and $d_{4p-4}$. To show that these differentials are in fact both zero, and to prove that all other differentials starting at the $(4p-3,0)$-position also vanish on $T_{\wP}$, we use two techniques:
 \begin{enumerate}\addtolength{\itemsep}{0.1\baselineskip}
\renewcommand{\labelenumi}{(\roman{enumi})}
 \item For the differentials $d^r$ with $2\leq r \leq 4p-5$, and $d^{4p-3}$, we compare the James spectral sequence for $\Omega _{4p-3}(B_{\wP, }\nu _{\wP})$ to the ones for $\Omega _{4p-3}(B_{\wwH_t},\nu _{\wwH_t})$ via transfer, and use naturality.
 
 \item For the differential $d^{4p-4}$ we compare the 
 James spectral sequence for $\Omega _{4p-3}(B_{\wP, }\nu _{\wP})$ to the James spectral sequences for the fibrations
 $B\wwH_{t} \to B\wP  \to B(\wP/\wwH_{t})$, and use naturality again. 
\end{enumerate}
In carrying out the second step, we will need to use the Adams spectral sequence to prove that the natural map
from the $p$--component of $\Omega ^{fr}_{4p-5}(*)$ to $\Omega _{4p-5}(B\wwH_{t}, \Bnu{\wwH_{t}})$ is injective (see Theorem
\ref{NatInjective}).

\subsection{The James Spectral Sequence}
Let $\{E_{n,m}^{r}(\nu )\}$ denote the James spectral sequence (see \cite{teichner1}) associated to a vector bundle $\nu $ over a base space $B$ and the fibration 
$$ * \longrightarrow B \longrightarrow B $$ 
and denote the differentials of this spectral sequence by $d^{r}$. We know that the second page is given by 
\begin{equation*}
E_{n,m}^{2}(\nu )=H_{n}(B,\Omega _{m}^{fr}(\ast ))
\end{equation*} 
and the spectral sequence converges to
\begin{equation*}
E_{n,m}^{\infty }(\nu )=F_{n}\Omega _{n+m}(B, \nu )/F_{n-1}\Omega _{n+m}(B, \nu )
\end{equation*}
where $B^{(n)}$ stands for the $n^{th}$ skeleton of $B$ and 
\begin{equation*}
F_{n}\Omega _{n+m}(B, \nu )=\Image ( \Omega _{n+m}( B^{(n)}, \nu |_{B^{(n)}})\to \Omega _{n+m}(B, \nu ))
\end{equation*}
For $0 \leq t \leq p $, let $$\tr_{t}\colon E_{n,m}^{r}(\nu _{\wP})\to E_{n,m}^{r}(\nu _{\wwH_{t}})$$ denote the
transfer map.

\subsection {Calculation of $d^{r}$ when $2 \leq r \leq 4p-5$}
Here we employ our first technique. We first need some information about the James spectral sequences for $\Omega _{4p-3}(B_{\wwH_t, }\nu _{\wwH_t})$.
\begin{lemma}
\label{LemmaForLessThan4p-4forH} For $2 \leq r \leq 4p-5$, the differential   
$$d^{r}\colon E_{4p-3,0}^{r}(\nu _{\wwH_{t}})\to E_{4p-3-r,r-1}^{r}(\nu _{\wwH_{t}})$$
is zero on $\tr_{t}(T_{\wP})$, where $T_{\wP}$ is considered as subgroup of $E_{4p-3,0}^{r}(\nu _{\wP})$.
\end{lemma}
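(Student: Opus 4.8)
The plan is to reduce the vanishing of each $d^r$ (for $2 \leq r \leq 4p-5$) on $\tr_t(T_\wP)$ to the structure of the $E^2$-page in the target range, using the $p$-local sparseness of the stable stems. First I would observe that, after localizing at $p$, the only nonzero $p$-torsion coefficient groups $\Omega^{fr}_m(\ast) = \pi^S_m$ in the relevant range $1 \leq m \leq 4p-5$ are $\pi^S_0 = \bZ$ and the two cyclic groups $\pi^S_{2p-3}$ and $\pi^S_{4p-5}$, each with $p$-primary part $\cy p$. Therefore the only rows $E^2_{n,m}(\nu_{\wwH_t})$ that can contribute nonzero $p$-torsion are $m = 0$, $m = 2p-3$, and $m = 4p-5$; and a differential $d^r$ out of the $(4p-3,0)$ spot can be nonzero only if it lands in one of the spots $(4p-3-r, r-1)$ with $r-1 \in \{0, 2p-3, 4p-5\}$, i.e. $r \in \{2p-2, 4p-4\}$. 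The differential $d^{4p-4}$ is explicitly excluded from this lemma (it is handled separately in \S\ref{sec: bordism element} via the fibration $B\wwH_t \to B\wP \to B(\wP/\wwH_t)$), so in the stated range $2 \leq r \leq 4p-5$ the only case to rule out is $r = 2p-2$, the differential $d^{2p-2}\colon E^{2p-2}_{4p-3,0}(\nu_{\wwH_t}) \to E^{2p-2}_{2p-1, 2p-3}(\nu_{\wwH_t})$.

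For that remaining differential I would argue as follows. The target $E^2_{2p-1,2p-3}(\nu_{\wwH_t}) = H_{2p-1}(B_{\wwH_t}; \pi^S_{2p-3})$, and since $\pi^S_{2p-3}$ is (up to prime-to-$p$ torsion) just $\cy p$, the $p$-part of this group is a quotient of $H_{2p-1}(B_{\wwH_t}; \cy p)$. The key input is Lemma \ref{SpectralSequenceCalculationsLemma4}: the natural map $i_*\colon H_{2p-1}(K;\cy p) \to H_{2p-1}(B_{\wwH_t};\cy p)$ is zero, because $\bar z_1, \bar z_2$ transgress nontrivially in the Serre spectral sequence of $K \to B_{\wwH_t} \to B\wwH_t$. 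One combines this with the edge-homomorphism description: the composite of $d^{2p-2}$ on our class, when compared under $\tr_t$ with the corresponding differential over $\wP$, is detected on the fiber class, and the image in $H_{2p-1}(B_{\wwH_t};\cy p)$ of any class supported there is killed by Lemma \ref{SpectralSequenceCalculationsLemma4}. More directly, $T_\wP$ consists of primitive elements of infinite order (Theorem \ref{ExistenceOfPrimitiveElementInHomologyLemma}) lying in filtration $F_{4p-3}$; its transfer lands in a class whose mod-$p$ reduction, by Lemma \ref{SpectralSequenceCalculationsLemma3}, is controlled by the image of $i_*$ from $H_{4p-3}(K)$, and the differential $d^{2p-2}$ on it is therefore forced to vanish.

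The cleanest route is probably the following: write out the $p$-localized $E^2$-page of the James spectral sequence for $\nu_{\wwH_t}$ in total degree $4p-3$, identify the only two potentially nonzero entries off the bottom row as $(2p-1, 2p-3)$ and $(1, 4p-5)$, note that $d^r$ from $(4p-3,0)$ for $r$ in the stated range can only hit the first of these (via $d^{2p-2}$), then show that entry receives no differential by invoking Lemma \ref{SpectralSequenceCalculationsLemma4} together with the naturality square relating the James spectral sequence over $\wwH_t$ to the one over $\wP$ via $\tr_t$. I expect the main obstacle to be the bookkeeping needed to verify that the class $\tr_t(\gamma)$, for $\gamma \in T_\wP$, really does live in the image of the fiber inclusion modulo higher filtration — i.e. correctly tracking the filtration jumps so that Lemma \ref{SpectralSequenceCalculationsLemma3} and Lemma \ref{SpectralSequenceCalculationsLemma4} can be applied — rather than any deep computation; the homotopy-theoretic content is entirely contained in the sparseness of $\pi^S_*$ and the transgression statement already established.
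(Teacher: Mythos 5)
Your sparseness observation is a genuine improvement in presentation over the paper's argument: in the range $2 \leq r \leq 4p-5$ the coefficient groups $\pi^S_{r-1}$ are finite of order prime to $p$ except when $r-1 \in \{2p-3, 4p-5\}$, so $H_{4p-3-r}(B_{\wwH_t};\pi^S_{r-1})$ is automatically $p$-torsion free except when $r \in \{2p-2, 4p-4\}$. Since $d^{4p-4}$ is excluded, this collapses the paper's three cases (there $r$ in $[2,2p-3]$, $r=2p-2$, $r$ in $[2p-1,4p-5]$) down to the single remaining differential $d^{2p-2}$, which is where the real content lies.

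However, there is a genuine gap that makes the argument as written incorrect. You apply the sparseness reasoning directly to $\tr_t(\gamma)$ for $\gamma \in T_{\wP}$, but these are elements of \emph{infinite} order, so $d^r(\tr_t(\gamma))$ need not be $p$-torsion, and the vanishing of the $p$-torsion in the target tells you nothing about it. The indispensable first step, which you never take, is the reduction via Proposition~\ref{HomologyOfB_H}: one has $p(\tr_t(\gamma)-\gamma_{\wwH_t})=0$, and since $\gamma_{\wwH_t}$ is the image of the fundamental class of the closed manifold $M_t$ under the Hurewicz map, it is a permanent cycle in the James spectral sequence, so $d^r(\gamma_{\wwH_t})=0$ for every $r$. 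Only after this reduction does the problem become one of showing $d^r$ vanishes on the $p$-torsion subgroup $I_t$ of $H_{4p-3}(B_{\wwH_t};\bZ)$, at which point your sparseness argument correctly dispatches all $r \neq 2p-2$. Your ``More directly'' paragraph gestures at $I_t$ via Lemma~\ref{SpectralSequenceCalculationsLemma3} but conflates $\tr_t(\gamma)$ itself with the torsion correction term and never mentions $\gamma_{\wwH_t}$, which is the crux.

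For the remaining differential $d^{2p-2}$, the naturality you need is with respect to the fiber inclusion $i\colon K \to B_{\wwH_t}$, not the transfer to $\wP$: by Lemma~\ref{SpectralSequenceCalculationsLemma3} the torsion class $\epsilon = \tr_t(\gamma)-\gamma_{\wwH_t}$ lies in $i_*\bigl(H_{4p-3}(K;\bZ)\bigr)$, so $d^{2p-2}(\epsilon)$ lies in $i_*$ of the corresponding target over $K$; the $p$-torsion part of $i_*$ on $E^2_{2p-1,2p-3}$ is exactly the map $H_{2p-1}(K;\cy p)\to H_{2p-1}(B_{\wwH_t};\cy p)$, which vanishes by Lemma~\ref{SpectralSequenceCalculationsLemma4}. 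You cite the right two lemmas, but the phrase ``compared under $\tr_t$ with the corresponding differential over $\wP$'' and the later reference to ``the naturality square \dots via $\tr_t$'' point in the wrong direction (the transfer to $\wP$ is what the \emph{next} lemma in the paper uses to pass from $\wwH_t$ back to $\wP$; it plays no role inside this one), so the chain of reasoning is not actually assembled.
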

\begin{proof}
Assume $2 \leq r \leq 4p-5$. By Proposition \ref{HomologyOfB_H}, and the fact that $d^{r}(\gamma _{\wwH_{t}})=0$, it is enough to show that   
$d^{r}\colon E_{4p-3,0}^{r}(\nu _{\wwH_{t}})\to E_{4p-3-r,r-1}^{r}(\nu _{\wwH_{t}})$
is zero on the $p$-torsion subgroup $I_{t}$ of $E_{4p-3,0}^{r}(\nu _{\wwH_{t}})=H_{4p-3}(B_{\wwH_{t}}; \bZ )$. Let $I$ be the $p$-torsion part of 
$E_{4p-3,0}^{r}(\nu _{\wwH_{t}} | _{K})=H_{4p-3}(K; \bZ )$. By Lemma \ref{SpectralSequenceCalculationsLemma3}, we have
$$I_{t}= \Image \{ i_{*}\colon I \to H_{4p-3}(B_{\wwH_{t}}; \bZ )\} \ .$$
We consider the cases  (i) $2 \leq r \leq 2p-3$, (ii) $r= 2p-2$, and (iii) $2p-1 \leq r \leq 4p-5$ separately.

\medskip
\noindent
Case (i). The group $E_{4p-3-r,r-1}^{2}(\nu _{\wwH_{t}} | _{K})$ is $p$-torsion free for $2 \leq r \leq 2p-3$, it follows that the 
 differential 
$$d^{r}\colon E_{4p-3,0}^{r}(\nu _{\wwH_{t}}  | _{K}  )\to E_{4p-3-r,r-1}^{r}(\nu _{\wwH_{t}} | _{K}  )$$
is zero on $I$. Hence the differential   
$$d^{r}\colon E_{4p-3,0}^{r}(\nu _{\wwH_{t}})\to E_{4p-3-r,r-1}^{r}(\nu _{\wwH_{t}})$$
is zero on $I_{t}$, for $2 \leq r \leq 2p-3$.

\medskip
\noindent
Case (ii).
Next we observe that the map
$i_{*}\colon E_{2p-1,2p-3}^{2}(\nu _{\wwH_{t}} | _{K})\to E_{2p-1,2p-3}^{2}(\nu _{\wwH_{t}})$, restricted to $p$-torsion, is just the natural map $i_*\colon H_{2p-1}(K;\cy p) \to H_{2p-1}(B_{\wwH_t};\cy p)$, which  is zero by Lemma \ref{SpectralSequenceCalculationsLemma4} 
Hence, the differential   
$$d^{2p-2}\colon E_{4p-3,0}^{2p-2}(\nu _{\wwH_{t}})\to E_{2p-1,2p-3}^{2p-2}(\nu _{\wwH_{t}})$$
is zero on $I_{t}$ by naturality.

\medskip
\noindent
Case (iii).
Finally, we note that
 $I_{t}$ is all $p$--torsion, but for $2p-1 \leq r \leq 4p-5$, the group $E_{4p-3-r,r-1}^{2}(\nu _{\wwH_{t}})$ is $p$--torsion free. 
 Hence, for $2p-1 \leq r \leq 4p-5$, the differential   
$$d^{r}\colon E_{4p-3,0}^{r}(\nu _{\wwH_{t}})\to E_{4p-3-r,r-1}^{r}(\nu _{\wwH_{t}})$$
is zero on $I_{t}$.
\end{proof}

\begin{lemma}
\label{LemmaForLessThan4p-4}For $2 \leq r \leq 4p-5$, the differential
 $$d^{r}\colon E_{4p-3,0}^{r}(\nu _{\wP})\to E_{4p-3-r,r-1}^{r}(\nu _{\wP})$$
 is zero on $T_{\wP}$, where $T_{\wP}$ is considered as a subgroup of $E_{4p-3,0}^{r}(\nu _{\wP})$.
\end{lemma}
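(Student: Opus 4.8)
The strategy is to transport the vanishing from $B_{\wwH_t}$ back to $B_{\wP}$ using the transfer maps $\tr_t$ and naturality of the James spectral sequence, exactly as announced in technique (i) of the section introduction. First I would recall that the transfer maps $\tr_t\colon E^r_{n,m}(\nu_{\wP})\to E^r_{n,m}(\nu_{\wwH_t})$ commute with the differentials $d^r$, since they are induced by a map of fibrations (the $p$-fold covering $B_{\wwH_t}\to B_{\wP}$ pulled back along $\pi_{\wP}$). By Lemma \ref{LemmaForLessThan4p-4forH}, for $2\le r\le 4p-5$ the differential $d^r$ on $E^r_{4p-3,0}(\nu_{\wwH_t})$ vanishes on $\tr_t(T_{\wP})$; hence by naturality $\tr_t\bigl(d^r(\gamma)\bigr)=d^r\bigl(\tr_t(\gamma)\bigr)=0$ for every $\gamma\in T_{\wP}$ and every $t\in\{0,\dots,p\}$.

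It then remains to argue that $d^r(\gamma)=0$ itself, i.e. that the element $d^r(\gamma)\in E^r_{4p-3-r,r-1}(\nu_{\wP})$, which is killed by all the transfers $\tr_t$, must actually be zero. The key point is that the target group $E^r_{4p-3-r,r-1}(\nu_{\wP})$ is a subquotient of $E^2_{4p-3-r,r-1}(\nu_{\wP})=H_{4p-3-r}(B_{\wP};\pi^S_{r-1})$, which (in the relevant range $r\le 4p-5$, so $r-1\le 4p-6$) has $p$-primary part carried only in degrees where $\pi^S_{r-1}$ has $p$-torsion, namely $r-1=2p-3$; and away from $p$ the claim is automatic since $T_{\wP}$ consists of elements of infinite order whose only possible obstruction after rationalization is trivial (the rational James spectral sequence degenerates as $\pi^S_m\otimes\bQ=0$ for $m>0$). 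So the whole question reduces to the $p$-primary part, where I would use the structure of the transfer: composed with the projection $\pi_*\colon H_*(B_{\wwH_t};\cy p)\to H_*(B_{\wP};\cy p)$, the transfer $\tr_t$ satisfies $\pi_*\circ\tr_t=$ multiplication by the index, but more usefully the transfers for the various subgroups $\wwH_0,\dots,\wwH_p$ are jointly injective on the relevant $p$-torsion subgroup of $H_*(B_{\wP};\cy p)$ — this is where one invokes the fact that the $\wwH_t$ cover all the ``lines'' in $Q_p$, together with Leary's description of $H^*(B\wP;\cy p)$ and the behaviour of restriction maps computed in Section \ref{three}. Concretely, an element of the $p$-torsion of $E^2_{4p-3-r,r-1}(\nu_{\wP})$ that restricts (transfers) to zero for every $t$ is detected to be zero by a Mackey-type argument on $H^*(B\wP;\cy p)$.

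I expect the main obstacle to be this last injectivity step: verifying that the collection $\{\tr_t\}_{0\le t\le p}$ has trivial common kernel on the $p$-torsion of $H_{4p-3-r}(B_{\wP};\cy p)$ for all relevant $r$. The cleanest route is probably to dualize and check the corresponding statement for the restriction maps $\Res^{\wP}_{\wwH_t}$ on $H^*(B\wP;\cy p)$, using the explicit generators $y,y',x,x',c_2,\dots,c_{p-1},z$ of Theorem \ref{TheoremofLearymodp} and the restriction formulas displayed at the end of Section \ref{three} (note $\Res^{\wP}_{\wwH_t}(\alpha)=v'$, $\Res^{\wP}_{\wwH_t}(\beta)=tv'$ for $t\le p-1$, etc.), together with the observation that the classes detecting the fibre $K$ transgress nontrivially as in Lemma \ref{SpectralSequenceCalculationsLemma4}. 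Once that linear-algebra-over-$\cy p$ fact is in hand, combining it with Lemma \ref{LemmaForLessThan4p-4forH} and naturality of $\tr_t$ finishes the proof.
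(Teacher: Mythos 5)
Your overall strategy (push forward with $\tr_t$, use naturality, reduce to joint injectivity of the transfers on the target) agrees with the paper, and you correctly locate the $p$-primary issue at $r=2p-2$. But there are two genuine gaps.

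First, for $r\neq 2p-2$ the paper's argument is not a rationalization argument. The target $E^r_{4p-3-r,r-1}(\nu_{\wP})$ is a subquotient of $H_{4p-3-r}(B_{\wP};\pi^S_{r-1})$, which for $2\leq r\leq 4p-5$, $r\neq 2p-2$, is a group annihilated by an integer prime to $p$ (since $\pi^S_{r-1}$ is a finite group of order prime to $p$). Knowing $d^r(\gamma)$ dies after tensoring with $\bQ$ tells you nothing extra here. The actual argument is that a \emph{single} transfer $\tr_0$ is already injective on this target: for a $p$-fold covering, $\pi_*\circ\tr_0$ is multiplication by $p$, which is injective on any group with no $p$-torsion. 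Your remark that $T_{\wP}$ consists of infinite-order classes is beside the point.

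Second, for $r=2p-2$, you have the dualization the wrong way round. Joint injectivity of the \emph{homological} transfers $\tr_t\colon H_{2p-1}(B\wP;\cy p)\to H_{2p-1}(B\wwH_t;\cy p)$ dualizes to joint \emph{surjectivity} of the \emph{cohomological} transfers $\tr_t\colon \bop_t H^{2p-1}(B\wwH_t;\cy p)\to H^{2p-1}(B\wP;\cy p)$, not to a statement about the restriction maps $\Res^{\wP}_{\wwH_t}$; the explicit restriction formulas from Section \ref{three} do not control this. Moreover, the key step you call a hoped-for ``linear-algebra-over-$\cy p$ fact'' is precisely the technical heart of the lemma, and it is not a direct computation: the paper identifies $H^{2p-1}(B\wP;\cy p)$ (via the Bockstein) with the indecomposable $SL_2(p)$-module $V_{p+1}\subset H^{2p}(B\wP;\bZ)$, uses the short exact sequence $0\to V_2\to V_{p+1}\to V_{p-1}\to 0$ together with $\Aut(\wP)$-invariance of $\Image(\bop_t\tr_t)$, and then invokes a specific transfer computation of Leary to produce an element outside $V_2$. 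Without the $SL_2(p)$-module argument and a concrete transfer formula (not a restriction formula), the surjectivity does not follow.
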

\begin{proof} Assume $2 \leq r \leq 4p-5$. By Lemma \ref{LemmaForLessThan4p-4forH} we know that $$d^{r}\colon E_{4p-3,0}^{r}(\nu _{\wwH_{t}})\to E_{4p-3-r,r-1}^{r}(\nu _{ \wwH_{t}})$$
is zero for all $t \in \{0,1,\dots,p\} $. Hence it is enough to show that 
$$\bop_{t}\tr_{t}\colon E_{4p-3-r,r-1}^{r}(\wP)\to \bop\nolimits_{t}E_{4p-3-r,r-1}^{r}(\wwH_{t})$$
is injective. The map $\tr_{0}$
is clearly injective for $r\neq 2p-2$ because the $p$--component of $\Omega _{r-1}^{\ast }(\mathbb{\ast })$ is $0$ and $B_{ 
\wwH_{t}}\to B_{\wP}$ is a $p$-covering
map. Hence $\bop\nolimits_{t}\tr_{t}$ is injective for $r\neq 2p-2$. Now we know that $\widetilde{B_{\wP}}$ and $\widetilde{B_{ \wwH_{t}}}$ are $2p-2$ connected. Hence for $r=2p-2$ the map $\tr_{t}$ is
the usual transfer map $H_{2p-1}(B\wP;\bZ   
/p)\to H_{2p-1}(B\wwH_{t};\bZ  /p)$. Hence, it is
enough to show that the map 
$$\bop\nolimits_{t}\tr_{t}\colon H_{2p-1}(B\wP ;\bZ  /p)\to \bop\nolimits_{t}H_{2p-1}(B\wwH_{t};\bZ  /p)$$
 is injective. Dually, this is equivalent to showing that 
$$\bop\nolimits_{t} \tr_{t}\colon \bop\nolimits_{t} H^{2p-1}(B\wwH_{t};\bZ   
/p)\to H^{2p-1}(B\wP;\bZ  /p)$$ is surjective.
By Theorem \ref{TheoremofLearymodp}, we know that $$H^{2p-1}(B\wP;\bZ  /p)=\la x^{p-1}y,x^{p-2}x^{\prime }y,\dots,(x^{\prime
})^{p-1}y,(x^{\prime })^{p-1}y^{\prime }\ra\ .$$
Under the Bockstein homomorphism, this  can be identified with
$$V_{p+1} = \la \alpha^p, \alpha^{p-1}\beta, \dots, \alpha\beta^{p-1}, \beta^p\ra \subseteq H^{2p}(B\wP;\bZ)$$
and this identifcation is natural with respect to the action of the automorphisms
$\Aut(\wP)$ acting through the induced map $\Aut(\wP) \to GL_2(p)$. 
In the statements of Theorem 1 and Theorem 2 of [25], Leary gives explicit formulas for the action of $\Aut(\wP)$ on the generators of the cohomology rings
$H^*(B\wP;\bZ)$  and $H^*(B\wP;\bZ/p)$. The point is that these cohomology generators are pulled back from the quotient group $\cy p \times \cy p$. 

Hence the action of the automorphisms $\phi_A \in \Aut(\wP)$,  defined in Remark \ref{rem: lifting automorphisms} for all  $A \in SL_2(p)$,  gives the standard $SL_2(p)$-action on $V_{p+1}$.
 This module $V_{p+1}$ is known to be an indecomposable $SL_2(p)$-module (see \cite[5.7]{glover1}), and there is a short exact sequence
$$0 \to V_2 \to V_{p+1} \to V_{p-1} \to 0$$
of $SL_2(p)$-modules, where $V_2= \la \alpha^p, \beta^p\ra$ has dimension 2 and $V_{p-1}$ is irreducible. 

 Now the image of the map $ 
\bop\nolimits_{t}\tr_{t}$ is invariant under all automorphisms of
the group $\wP$. Hence it is enough to show that
$\Image(\bop\nolimits_{t}\tr_{t})$ projects non-trivially into $V_{p-1}$.
However, the calculations of \cite[p.~67]{leary2} show that 
$$\tr_p(\Res_{\wwH_p}(y')\cdot \bar\tau^{p-1}) = y'\cdot \tr_p(\bar\tau^{p-1}) = y'(c_{p-1} + x^{p-1}) = -(x')^{p-1}y' + y'x^{p-1}\ . $$
After applying the Bockstein, this shows that the element $\beta^p - \beta\alpha^{p-1}$ is contained in the image of the transfer. Since this element is not contained in the submodule $V_2$, we are done.
\end{proof}

\medskip
The remaining possibly non-zero differentials are $d^{4p-4}$ and $d^{4p-3}$. The first one is handled by comparison with the fibrations
$$B\wwH_{t} \to B\wP  \to B(\wP/\wwH_{t})$$
but first we must show that the induced map on coefficients at the $(0,4p-3)$-position is injective on the $p$-component. For this we use the Adams spectral sequence.
\newline

\subsection{The Adams spectral sequence}\label{SectionOnAdamsSpecSeqn} 
Let $X$ be a connective spectrum of finite type. We will write 
$$X=\{X_{n},i_{n}\vv {n \geq 0}\},$$
where each $X_{n}$ is a space with a basepoint and  
$i_{n}\colon \Sigma X_{n}\to  X_{n+1}$ is a basepoint preserving map.  
We will denote the Adams spectral sequnce for $X$ as follows:
$$\{E^{n,m}_{r}(X),d_{r}\}\ .$$
The second page of this spectral sequence is given by 
$$  E^{n,m}_{2}(X) = \Ext_{\cA_p}^{n,m}(H^{*}(X; \bZ /p),\bZ /p), $$
where $\cA_p$ is the mod-$p$ Steenrod algebra and $H^{*}(X; \bZ /p)$ is considered as an $\cA_p$-module. The differentials of this spectral sequence are as follows:
$$  d_{r}\colon E^{n,m}_{r}\to E^{n+r,m+r-1}_{r} $$
for $r\geq 2$, and it converges to 
$$ _{(p)} \pi ^{S}_{*}(X) = \pi ^{S}_{*}(X) / \la \text{torsion prime to } p \ra $$
with the filtration 
$$\dots\subseteq  F^{2,*+2}(X) \subseteq F^{1,*+1}(X) \subseteq F^{0,*}(X) =
  \hphantom{}_{(p)}\pi ^{S}_{*}(X)$$ 
defined by:
$$ F^{n,m}(X)=  \hphantom{}_{(p)}\Image \{  \pi ^{S}_{m}(X_{n})\to  \pi ^{S}_{m-n}(X) \}$$
In other words,
$$ E^{n,m}_{\infty }(X) = F^{n,m}(X)/F^{n+1,m+1}(X)\ .$$
\begin{example}\label{StableStem}
Take an $\cA_p$--free resolution $F^{\bbS}_*$ of the sphere spectrum $\bbS $$$ \dots \xrightarrow{\partial _{3}}  F^{\bbS }_{2}
\xrightarrow{\partial _{2}}  F^{\bbS }_{1}
\xrightarrow{\partial _{1}}  F^{\bbS }_{0} 
\xrightarrow{\partial _{0}}  H^{*}(\bbS ; \bZ /p) $$
with the following properties: 
\begin{itemize}
\item We have $\iota ^{\bbS}_{0}$ in $F^{\bbS }_{0}$, such that $\partial _{0}(\iota ^{\bbS}_{0})\text{ is a generator of }H^{*}(\bbS; \cy p)=\cy p$. \smallskip
\item We have $\alpha ^{\bbS}_{0}$ and $\alpha ^{\bbS}_{2p-3}$ in $F^{\bbS }_{1}$, such that 
$\partial _{1}(\alpha ^{\bbS}_{0})=\beta(\iota ^{\bbS}_{0})$ and $\partial _{1}(\alpha ^{\bbS}_{2p-3})=P^{1}(\iota ^{\bbS}_{0})$ because 
$H^{i}(\bbS ; \bZ /p)=0 \text{\  for\  }i\geq 1$. \smallskip
\item We have $\beta ^{\bbS}_{4p-5}$ in $F^{\bbS }_{2}$, where $\partial _{2}(\beta ^{\bbS}_{4p-5})=P^2(\alpha ^{\bbS}_{0})-P^{1}\beta (\alpha ^{\bbS}_{2p-3})+2\beta P^{1}(\alpha ^{\bbS}_{2p-3})$ because 
$$ P^2(\beta(\iota ^{\bbS}_{0}))-P^{1}\beta (P^{1}(\iota ^{\bbS}_{0}))+2\beta P^{1}(P^{1}(\iota ^{\bbS}_{0}))=0 \ .$$
\end{itemize}
In the Adams spectral sequence that converges to the $p$--component of $\pi ^{S}_{*}(\bbS )=\Omega ^{fr}_{*}(*)$, the element $\beta ^{\bbS}_{4p-5}$ must survive to the $E^{\infty }$--term as there are no possible differentials. Hence we have the following:
\begin{enumerate} 
\item $_{(p)}\Omega ^{fr}_{2p-3}(*)=\bZ /p =\la \alpha ^{\bbS}_{2p-3} \ra$
\item $_{(p)}\Omega ^{fr}_{4p-5}(*)=\bZ /p =\la \beta ^{\bbS}_{4p-5} \ra$
\end{enumerate}
\end{example}

\subsection{Cohomology of the Thom spectrum associated to $\Bnu{G}$ } \label{SectionOnModuleStructure}
Now take any $G\subseteq \wP$ and let $M\Bnu{G}$ denote the Thom spectrum associated to the bundle $\Bnu{G}$. Since the bundle $\xi_G$ is fixed, for a given $G$, we will shorten the notation by writing $\MS G = M\Bnu{G}$. As in the previous section, we will denote an $\cA_p$--free resolution of $H^{*}(\MS G;\bZ /p)$ as follows:
\begin{equation*}
\dots \xrightarrow{\partial _{2}}  
F^{\MS G}_{1} \xrightarrow{\partial _{1}}  
F^{\MS G}_{0} \xrightarrow{\partial _{0}}   H^{*}(\MS G; \bZ /p) 
\end{equation*}
It is clear that, to understand these resolutions we must first understand the $\cA_p$--module structure on the cohomology $H^{*}(\MS G; \bZ /p)$ of these spectra.  
Let $U_{G} \in H^0(\MS G; \bZ /p)$ denote the Thom class of the Thom spectrum $\MS G$. Then we can write 
$$H^{*}(\MS G;\bZ /p) = U_{G} \cdot H^{*}(BG;\bZ /p)\ .$$
Moreover, for $G=S^1$ we will write 
$$ H^{*}(BS ^{1};\bZ /p)= \bF_p[\bar{\tau }], $$
and for $G=D_{t}$ we have
$$ H^{*}(BD_{t};\bZ /p) = ( \Lambda (u) \otimes \bF_p [v] )\ .   $$
Hence for $G=\wwH_{t}$ we can consider 
$$ H^{*}(B\wwH_{t};\bZ /p)=H^{*}(BD_{t};\bZ /p) \otimes H^{*}(BS ^{1};\bZ /p)
= ( \Lambda (u) \otimes \bF_p [v] ) \otimes \bF_p[\bar{\tau }]$$ 
\begin{lemma} 
For $G=S^1$, $D_{t}$, or $\wwH_{t}$ we have
$$ \beta (U_{G})=0,\quad P^1 \beta (U_{G})=0, \quad \beta P^1 (U_{G})=0, \quad \beta P^1 \beta (U_{G})=0, \quad P^2 (U_{G})=0$$ 
and
$$ P^1 (U_{G})= 
\begin{cases} 
U_{\wwH_{t}}v^{p-1}   & \textup{if\ \ } G=\wwH_t,\cr
U_{D_{t}}v^{p-1}   & \textup{if\ \ } G=D_t,\cr
0    & \textup{if\ \ } G=S^1
\end{cases} $$
\end{lemma}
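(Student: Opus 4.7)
The plan is to reduce every identity to three inputs: orientability of $\Bnu{G}$, Wu's formula $P^{i}(U_{\xi}) = U_{\xi} \cdot q_{i}(\xi)$ for the Thom class at odd primes, and naturality of Wu classes along the inclusions $S^{1} \subset D_{t} \subset \wwH_{t}$. Orientability is immediate since $\Bnu{G}$ factors through $BSO$, so $U_{G}$ is the mod $p$ reduction of an integral Thom class; this yields $\beta(U_{G}) = 0$, and consequently $P^{1}\beta(U_{G}) = 0$ and $\beta P^{1} \beta(U_{G}) = 0$ follow at once.

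For $P^{1}(U_{G})$, Wu's formula reduces matters to computing $q_{1}(\Bnu{G})$. Lemma \ref{sphericalclass} already provides $q_{1}(\Bnu{\wwH_{t}}) = v^{p-1}$. Since $\Bnu{D_{t}}$ and $\Bnu{S^{1}}$ are both defined as pull-backs of $\Bnu{\wwH_{t}}$ along subgroup inclusions, naturality gives $q_{1}(\Bnu{D_{t}}) = v^{p-1}$ in $H^{*}(BD_{t};\bZ/p)$. For $G = S^{1}$, the class $v$ is the mod $p$ reduction of $\Phi_{t,\wwH_{t}}$, and since $S^{1} \subset \ker \Phi_{t} = \wwH_{p-t}$ the character $\Phi_{t,S^{1}}$ is trivial, so $v$ restricts to zero on $BS^{1}$ and hence $q_{1}(\Bnu{S^{1}}) = 0$.

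For $\beta P^{1}(U_{G})$, I would apply the Leibniz rule for the Bockstein to the product $P^{1}(U_{G}) = U_{G} \cdot v^{p-1}$: since $\beta(U_{G}) = 0$ and $\beta(v) = 0$ (as $v$ is the reduction of the integral class $v'$), the Bockstein of the product vanishes. For $P^{2}(U_{G})$, I would use the Adem relation $P^{1}P^{1} = 2 P^{2}$, valid at odd primes, so $P^{2}(U_{G}) = \tfrac{1}{2} P^{1}P^{1}(U_{G})$. Applying $P^{1}$ to $U_{G} \cdot v^{p-1}$ via Cartan's formula, and using $P^{1}(v) = v^{p}$ together with the easy induction $P^{1}(v^{k}) = k\, v^{p+k-1}$, the two summands $P^{1}(U_{G}) \cdot v^{p-1}$ and $U_{G} \cdot P^{1}(v^{p-1})$ simplify to $U_{G} v^{2p-2}$ and $(p-1) U_{G} v^{2p-2}$, which sum to zero modulo $p$. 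The $G = S^{1}$ case is trivial as $P^{1}(U_{S^{1}}) = 0$ already.

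The main obstacle I anticipate is pinning down the correct odd-primary statement of Wu's formula (the reference \cite{milnor-stasheff} used earlier for Lemma \ref{sphericalclass} treats the mod $2$ analogue, and one needs the corresponding odd-primary version relating $P^{i}(U)$ to $q_{i}(\xi)$), together with careful sign bookkeeping in the Adem and Cartan computation to ensure the cancellation giving $P^{2}(U_{G}) = 0$ is exact rather than off by a unit in $\bF_p$.
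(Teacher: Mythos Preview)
Your proof is correct and follows essentially the same route as the paper: integral lift of the Thom class for $\beta(U_G)=0$, Wu's formula $P^1(U_G)=U_G\cdot q_1(\Bnu{G})$ together with Lemma~\ref{sphericalclass} and restriction to the subgroups, the Leibniz rule for $\beta P^1$, and the Adem relation $P^1P^1=2P^2$ plus Cartan for $P^2$. Your concern about the odd-primary Wu formula is unnecessary: the reference \cite[p.~228]{milnor-stasheff} already invoked in the definition of $q_k$ treats exactly the odd-primary case you need, and your Adem/Cartan bookkeeping is correct (indeed you have the relation in the right direction, $P^1P^1=2P^2$).
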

\begin{proof} 
The Thom class $U_{G}$ is the mod $p$ reduction of an integral cohomology class, so $\beta (U_{G})=0$.
 By Lemma \ref{sphericalclass},  
$q_{1}(\Bnu{\wwH_{t}})=v^{p-1}$. 
Since 
$P^1(U_{\wwH_{t}})=U_{\wwH_{t}}v^{p-1}$, we obtain
$$ P^1 (U_{S^1})=0 \text{\ \  and \ \ } P^1 (U_{D_{t}})=U_{D_{t}}v^{p-1} $$ 
by restriction  to $H^{*}(BD_{t};\bZ /p)$ and $H^{*}(BS ^{1};\bZ /p)$.
For $G=D_{t}$ or $\wwH_{t}$ we have
$$\beta P^1 (U_{G})=\beta (U_{G}v^{p-1})=\beta (U)v^{p-1}+U\beta (v^{p-1})=0+0=0$$
and it is clear that
$\beta P^1 (U_{S^1})=0$.
 By the Adem relations we have $P^2 (U_{G}) = 2P^1P^1(U_{G})$.
Hence for $G=D_{t}$ or $\wwH_{t}$ we have
$$P^2 (U_{G})=2P^1(U_{G}v^{p-1})=2(P^1(U_{G})v^{p-1})+U_{G}P^1(v^{p-1}))=2(U_{G}v^{2p-2}-U_{G}v^{2p-2})=0$$
and it is clear that
$P^2 (U_{S^1})=0$.
\end{proof}
\subsection{Calculation of $d^{4p-4}$}
The inclusion of a point induces a
natural map from $\Omega ^{fr}_{4p-5}(*)$ to $\Omega _{4p-5}(B\wwH_{t}, \Bnu{\wwH_{t}})$ for each of the subgroups $\wwH_t$, $ 0\leq t \leq p$.
\begin{theorem}\label{NatInjective}
The natural map  $\Omega ^{fr}_{4p-5}(*)\to \Omega _{4p-5}(B\wwH_{t}, \Bnu{\wwH_{t}})$ is injective on the $p$--component.
\end{theorem}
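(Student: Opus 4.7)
The strategy is to use the mod-$p$ Adams spectral sequence set up above, comparing the spectral sequences for $\bbS$ and $\MS{\wwH_t}$ via the map of spectra $f\colon \bbS \to \MS{\wwH_t}$ induced by the inclusion of a point; on mod $p$ cohomology this is the augmentation $H^*(\MS{\wwH_t};\bZ/p) \to \bZ/p$ sending $U_{\wwH_t} \mapsto 1$ and killing the positive-degree part. By Pontryagin--Thom the natural map in question is $f_*\colon \pi^S_{4p-5}(\bbS) \to \pi^S_{4p-5}(\MS{\wwH_t})$, and by Example~\ref{StableStem} the $p$-component of the domain is cyclic of order $p$, generated by $\beta^\bbS_{4p-5}$ of Adams filtration exactly $2$. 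So it suffices to exhibit $f_*(\beta^\bbS_{4p-5})$ as a nonzero class on the $E_\infty$-page of the Adams spectral sequence for $\MS{\wwH_t}$.

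The key observation is that the computations $\beta(U_{\wwH_t}) = \beta P^1(U_{\wwH_t}) = P^1\beta(U_{\wwH_t}) = P^2(U_{\wwH_t}) = 0$ from the preceding lemma, together with the Cartan formula giving $P^1 P^1(U_{\wwH_t}) = P^1(U_{\wwH_t}v^{p-1}) = 0$, imply that the Adem-type identity $P^2 \beta(U_{\wwH_t}) - P^1\beta P^1(U_{\wwH_t}) + 2\beta P^1 P^1(U_{\wwH_t}) = 0$ holds in $H^*(\MS{\wwH_t};\bZ/p)$. I would therefore build a partial $\cA_p$-free resolution $F^{\MS{\wwH_t}}_\bullet$ of $H^*(\MS{\wwH_t};\bZ/p)$ through homological degree $2$ and internal degree $4p-3$, together with a chain-level lift $\tilde f\colon F^{\MS{\wwH_t}}_\bullet \to F^\bbS_\bullet$ of the augmentation $f^*$, and use the above identity to produce a generator $\widetilde\beta \in F^{\MS{\wwH_t}}_2$ whose image under $\tilde f_2$ is $\beta^\bbS_{4p-5}$. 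The boundary of $\widetilde\beta$ will not formally match $\partial_2(\beta^\bbS_{4p-5})$, because the absence of the relation $P^1(\iota_0^\bbS) = 0$ in the $\MS{\wwH_t}$ setting forces extra generators of $F^{\MS{\wwH_t}}_\bullet$ to appear in $\partial_2(\widetilde\beta)$; however these extra generators all lie in the kernel of $\tilde f$ by construction.

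The main step is then to verify that $\widetilde\beta$ is nonzero in $\Ext^{2,4p-3}_{\cA_p}(H^*(\MS{\wwH_t};\bZ/p),\bZ/p)$: equivalently, that the dual cochain pairing to $1$ with $\widetilde\beta$ is not a Hom-coboundary from $\Hom_{\cA_p}(F^{\MS{\wwH_t}}_1,\bZ/p)$. Since $H^*(\MS{\wwH_t};\bZ/p) = U_{\wwH_t}\cdot\bigl(\Lambda(u) \otimes \bF_p[v] \otimes \bF_p[\bar\tau]\bigr)$ is known explicitly, this is a finite enumeration of $\cA_p$-module generators of $F^{\MS{\wwH_t}}_0$ and $F^{\MS{\wwH_t}}_1$ up to internal degree $4p-3$, combined with a direct check that no Hom-coboundary from these generators picks up a nonzero coefficient at $\widetilde\beta$. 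Finally, outgoing Adams differentials $d_r(\widetilde\beta)$ vanish by naturality with the Adams spectral sequence for $\bbS$ (where $\beta^\bbS_{4p-5}$ is a permanent cycle), while the only potential incoming differential $d_2$ from bidegree $(0,4p-4)$ can be ruled out by a small direct computation of $\Ext^{0,4p-4}_{\cA_p}(H^*(\MS{\wwH_t});\bZ/p)$ and its $d_2$.

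The hard part will be the Ext nonvanishing of $\widetilde\beta$: while the vanishing of the operations $P^2 U_{\wwH_t}$, $\beta P^1 U_{\wwH_t}$, $P^1\beta U_{\wwH_t}$ is exactly what permits the defining relation of $\beta^\bbS_{4p-5}$ to lift formally to $F^{\MS{\wwH_t}}_\bullet$, one must still argue carefully that no accidental Hom-coboundary arises from the additional $\cA_p$-free generators of $F^{\MS{\wwH_t}}_1$ forced by the nontriviality $P^1(U_{\wwH_t}) = U_{\wwH_t}v^{p-1}$ and by the extra $\cA_p$-indecomposables $u U_{\wwH_t}, \bar\tau U_{\wwH_t},\ldots$ in $H^*(\MS{\wwH_t};\bZ/p)$.
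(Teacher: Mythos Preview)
Your proposal follows the same route as the paper: build a partial free $\cA_p$-resolution of $H^*(\MS{\wwH_t};\bZ/p)$ through homological degree $2$, construct the chain map $F^{\MS{\wwH_t}}_\bullet \to F^\bbS_\bullet$ lifting the augmentation, and track the class $\beta^\bbS_{4p-5}$. The paper writes down the relevant generators $\iota^{\MS{\wwH_t}}_0, \iota^{\MS{\wwH_t}}_{2p-3}$ in $F_0$, then $\alpha^{\MS{\wwH_t}}_0, \alpha^{\MS{\wwH_t}}_{2p-3}, \alpha^{\MS{\wwH_t}}_{4p-5}$ in $F_1$, and $\beta^{\MS{\wwH_t}}_{4p-5}$ in $F_2$, with their boundaries, defines the chain map on these generators, and then asserts ``there are no differentials in this range.''

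One remark: the step you single out as the ``hard part'' --- Ext nonvanishing of $\widetilde\beta$ --- is actually a one-line observation once $\partial_2(\widetilde\beta)$ is written down. Every summand of $\partial_2(\widetilde\beta)$ carries a Steenrod operation of positive degree, so $\partial_2(\widetilde\beta) \in I\cdot F^{\MS{\wwH_t}}_1$ where $I$ is the augmentation ideal of $\cA_p$. Consequently any $\psi \in \Hom_{\cA_p}(F^{\MS{\wwH_t}}_1, \bZ/p)$ satisfies $(\delta\psi)(\widetilde\beta) = \psi(\partial_2\widetilde\beta) = 0$, and no coboundary can take the value $1$ on $\widetilde\beta$, regardless of what other generators $F^{\MS{\wwH_t}}_1$ contains. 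This is just the minimal-resolution condition in disguise; the enumeration of extra $\cA_p$-indecomposables and the ``accidental Hom-coboundary'' you worry about play no role.
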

\begin{proof} The generator of $_{(p)}\Omega ^{fr}_{4p-5}(*)$ is represented by the class $\beta_{4p-5}^{\bbS}$ defined above. We will show that this element maps non-trivially in the Adams spectral sequence. Denote the elements of $H^{*}(\MS{\wwH_{t}}; \bZ /p)$, $H^{*}(\MS{S^1}; \bZ /p)$, and $H^{*}(\MS{D_{t}}; \bZ /p)$ as in Section \ref{SectionOnModuleStructure}. It is straightforward to check the following:

\nr We have $\iota ^{\MS{\wwH_{t}}}_{0}$ and $\iota ^{\MS{\wwH_{t}}}_{2p-3}$ in $F^{\MS{\wwH_{t}}}_0$ such that 
$$\partial _{0}(\iota ^{\MS{\wwH_{t}}}_{0})=U_{\wwH_{t}}\text{\ \ \ and\ \ \ }\partial _{0}(\iota ^{\MS{\wwH_{t}}}_{2p-3})=U_{\wwH_{t}}uv^{(p-2)}\ .$$

\nr We have $\alpha ^{\MS{\wwH_{t}}}_{0}$ and $\alpha ^{\MS{\wwH_{t}}}_{2p-3}$ in $F^{\MS{\wwH_{t}}}_1$ such that
$$\partial _{1}(\alpha ^{\MS{\wwH_{t}}}_{0})=\beta(\iota ^{\MS{\wwH_{t}}}_{0})\text{ \  and \  }\partial _{1}(\alpha ^{\MS{\wwH_{t}}}_{2p-3})=P^1(\iota ^{\MS{\wwH_{t}}}_{0})- \beta(\iota ^{\MS{\wwH_{t}}}_{2p-3})$$ 
because $ \beta(U_{\wwH_{t}})=0$ and $P^1(U_{\wwH_{t}})- \beta(Uuv^{(p-2)})=0 $. 

\nr We also have $\alpha ^{\MS{\wwH_{t}}}_{4p-5}$ in $F^{\MS{\wwH_{t}}}_1$ such that $$\partial _{1}(\alpha ^{\MS{\wwH_{t}}}_{4p-5})=P^1 \beta (\iota ^{\MS{\wwH_{t}}}_{2p-3})$$ because
$P^1(\beta(U_{\wwH_{t}}uv^{(p-2)}))=0$. 

\nr We have $\beta ^{\MS{\wwH_{t}}}_{4p-5}$ in  $F^{\MS{\wwH_{t}}}_2$ such that
$$\partial _{2}(\beta ^{ \MS{\wwH_{t}}}_{4p-5})=P^2(\alpha ^{ \MS{\wwH_{t}}}_{0})-P^{1}\beta (\alpha ^{ \MS{\wwH_{t}}}_{2p-3})+2\beta P^{1}(\alpha ^{ \MS{\wwH_{t}}}_{2p-3})+2\beta (\alpha ^{ \MS{\wwH_{t}}}_{4p-5})$$ because
$$P^2( \beta(\iota ^{\MS{\wwH_{t}}}_{0})  )-P^{1}\beta (P^1(\iota ^{\MS{\wwH_{t}}}_{0})- \beta(\iota ^{\MS{\wwH_{t}}}_{2p-3}))+2\beta P^{1}(P^1(\iota ^{\MS{\wwH_{t}}}_{0})- \beta(\iota ^{\MS{\wwH_{t}}}_{2p-3}))+2\beta P^1 \beta (\iota ^{\MS{\wwH_{t}}}_{2p-3})=0\ .$$
Now we define a part of the chain map $ F^{ \MS{\wwH_{t}}}_{*} \to F^{\bbS }_{*}$. We send
$$\iota ^{ \MS{\wwH_{t}}}_{0} \mapsto \iota ^{\bbS }_{0} \text{\ \  and \ \ } \iota ^{ \MS{\wwH_{t}}}_{2p-3}\mapsto 0\ . $$
Since $\beta (\iota ^{\MS{\wwH_{t}}}_{0}) \mapsto \beta(\iota ^{\bbS }_{0})$ 
and $P^1(\iota ^{\MS{\wwH_{t}}}_{0})- \beta(\iota ^{\MS{\wwH_{t}}}_{2p-3}) \mapsto P^1(\iota ^{\bbS }_{0})$ we must have
$$\alpha ^{ \MS{\wwH_{t}}}_{2p-3} \mapsto \alpha ^{\bbS }_{2p-3}\text{\ \ and \ \ } \alpha ^{ \MS{\wwH_{t}}}_{4p-5}\mapsto 0 \ .$$
Finally, we can send
$$\beta ^{\MS{\wwH_{t}}}_{4p-5}\mapsto \beta ^{\bbS }_{4p-5}$$
and this definition proves the Theorem, as there are no differentials in this range.
\end{proof}

\begin{remark}\label{homotopysphere}
 A similar technique can be used to prove that
the natural map $\Omega_{10}^{fr}(\ast) \to \Omega_{10}(BS^1, \xi_{S^1})$
is injective on the $3$-component. One constructs a chain map
$F_*^{\bbS} \to F_*^{\MS{S^1}}$ in degrees $\leq 11$, whose composite with the chain map induced by the natural map $H^*(\MS{S^1}; \bZ /p) \to H^*(\bbS; \bZ /p)$ is chain homotopic to the identity.
The element $\beta^{\bbS}_{10}$ generating the $3$-component of $\pi^S_{10}$ arises from $P^2(\alpha_3^{\bbS})$ and the Adem relation $P^2P^1 \iota_0^{\bbS}= 0$.
\end{remark}

\begin{lemma}
\label{LemmaFor4p-4} $d^{4p-4}\colon E_{4p-3,0}^{4p-4}(\nu _{\wP})\to E_{1,4p-5}^{4p-4}(\nu _{\wP})$ is zero.
\end{lemma}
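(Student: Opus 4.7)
The plan is to exploit the fact that $\wwH_t$ is normal in $\wP$ with $\wP/\wwH_t \cong \cy p$, and to compare the James spectral sequence for $\nu_\wP$ with the bordism spectral sequence of the associated fibration. For each $t \in \{0,1,\ldots,p\}$, pulling the group-theoretic fibration $B\wwH_t \to B\wP \to B(\wP/\wwH_t)$ back along $B_\wP \to B\wP$ produces a fibration $B_{\wwH_t} \to B_\wP \to B\cy p$, with $\nu_\wP$ restricting fibrewise to $\nu_{\wwH_t}$. This yields a Leray--Serre-style spectral sequence
\[ \widetilde E^2_{s,q}(t) = H_s(B\cy p;\Omega_q(B_{\wwH_t},\nu_{\wwH_t})) \Longrightarrow \Omega_{s+q}(B_\wP,\nu_\wP), \]
which is compared to the James spectral sequence of $\nu_\wP$ by the natural morphism of spectral sequences arising from the two filtrations of $B_\wP$ (the skeletal one and the one pulled back from $B\cy p$).

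Next I would localise the target at $p$. Since $K(\bZ\oplus\bZ,2p-1)$ is $(2p-2)$-connected, the projection $B_\wP \to B\wP$ is an isomorphism on $H_1$, so the $p$-component of $H_1(B_\wP;\pi^S_{4p-5})$ is $Q_p = \la a,b\ra$. The two quotient maps $\wP \to \wP/\wwH_0$ and $\wP \to \wP/\wwH_p$ induce surjections $Q_p \twoheadrightarrow \cy p$ with kernels $\la a\ra$ and $\la b\ra$ respectively, intersecting trivially. Consequently it suffices to prove that, for $t = 0$ and $t = p$, the image of $d^{4p-4}$ under the naturality map $H_1(B_\wP;\pi^S_{4p-5}) \to H_1(B\cy p;\pi^S_{4p-5})$ vanishes---equivalently, that the corresponding differential $\widetilde d^{4p-4}$ in the fibration spectral sequence $\widetilde E^\bullet(t)$ is zero.

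To prove the vanishing of $\widetilde d^{4p-4}(t)$, I would invoke Theorem \ref{NatInjective}: the natural map $\pi^S_{4p-5} \to \Omega_{4p-5}(B_{\wwH_t},\nu_{\wwH_t})$ is injective on the $p$-component, so the generator $\beta^\bbS_{4p-5}$ lifts to a genuine bordism class on the fibre. It follows that the $p$-primary classes at position $(1,4p-5)$ of $\widetilde E^\bullet(t)$ are represented by actual elements of $\Omega_{4p-5}(B_\wP,\nu_\wP)$ and therefore survive to $\widetilde E^\infty$. Since the source $\widetilde E^2_{4p-3,0}(t)$ sits inside $H_{4p-3}(B\cy p;\bZ) = \cy p$, any nonzero $\widetilde d^{4p-4}$ would necessarily kill one of these surviving classes, a contradiction. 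Combined with the fibre-direction vanishing of Lemma \ref{LemmaForLessThan4p-4forH}, this forces $\widetilde d^{4p-4}(t) = 0$ and hence $d^{4p-4} = 0$ on $E^{4p-4}_{4p-3,0}(\nu_\wP)$.

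The main obstacle is the spectral sequence bookkeeping: verifying that the comparison map correctly identifies the differential targets and that Theorem \ref{NatInjective} genuinely rules out cancellation at position $(1,4p-5)$ in the fibration sequence. The Adams spectral sequence input of Theorem \ref{NatInjective}, combined with the $H_1$ injectivity $Q_p \hookrightarrow \cy p \oplus \cy p$ coming from the two choices $t = 0, p$, is the essential technical content.
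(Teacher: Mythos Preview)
Your overall architecture matches the paper's: compare the James spectral sequence for $\nu_\wP$ to spectral sequences coming from the fibrations $B\wwH_t \to B\wP \to B(\wP/\wwH_t)$, use Theorem~\ref{NatInjective} together with the detection of $H_1(Q_p;\cy p)$ by the two cyclic quotients ($t=0$ and $t=p$) to get injectivity at the $(1,4p-5)$ position, and then argue that the corresponding differential in the comparison sequence vanishes. (The paper's comparison sequence has fibre $B\wwH_t$ and converges to $\Omega_*(B\wP,\Bnu{\wP})$ rather than to $\Omega_*(B_\wP,\nu_\wP)$, but this is a minor variant of your setup.)

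The genuine gap is in your third paragraph, where you try to show $\widetilde d^{4p-4}(t)=0$. You invoke Theorem~\ref{NatInjective} to argue that the $p$-primary classes at position $(1,4p-5)$ of $\widetilde E^\bullet(t)$ ``are represented by actual elements'' of total bordism and hence survive to $\widetilde E^\infty$, so cannot be hit. But Theorem~\ref{NatInjective} only says that $\beta^{\bbS}_{4p-5}$ is nonzero in the \emph{fibre} bordism $\Omega_{4p-5}$; this is exactly what makes the comparison map injective on coefficients at $(1,4p-5)$, but it says nothing about whether a class in $H_1\bigl(B\cy p;\Omega_{4p-5}(\text{fibre})\bigr)$ is realized by a bordism class over the total space, nor whether such a class escapes being a boundary in $\widetilde E^\bullet(t)$. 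Nonvanishing of a coefficient group does not turn $E^2$-classes carrying those coefficients into permanent non-boundaries.

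The paper closes this gap from the \emph{source} side instead. It exhibits the lens space $N_{p-t}=S^{4p-3}/D_{p-t}$ (constructed in Section~\ref{mainexamples}) as an explicit element of $\Omega_{4p-3}(B\wP,\Bnu{\wP})$. The inclusion $D_{p-t}\subset\wP$ is split on homology by the projection $\wP \to \wP/\wwH_t \cong D_{p-t}$, so the image of $[N_{p-t}]$ under the edge map to $H_{4p-3}\bigl(B(\wP/\wwH_t);\bZ\bigr)\cong\cy p$ is a generator. Since $E^2_{4p-3,0}(t)=\cy p$ and its generator is thus represented by an actual bordism class, every differential out of it --- in particular $d^{4p-4}_t$ --- must vanish. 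This geometric input (the lens spaces $N_{p-t}$) is what your argument is missing.
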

\begin{proof}
 We consider the  fibration 
\begin{equation*}
B\wwH_{t} \longrightarrow B\wP  \longrightarrow B(\wP/\wwH_{t}) 
\end{equation*}
for $0\leq t \leq p $.
This fibration induces a James spectral sequence $E_{*,*}^{*}(t)$ with differential denoted by $d^{*}_{t}$ so that the second page is given by 
\begin{equation*}
E_{n,m}^{2}(t)=H_{n}(\wP/\wwH_{t},\Omega _{m}(B{\wwH_{t}},\Bnu{\wwH_{t}}))
\end{equation*} 
and the spectral sequence converges to $\Omega _{*}(B{\wP}, \Bnu{ 
\wP} )$. Moreover, we have a natural map 
$ E_{*,*}^{4p-4}(\nu _{\wP}) \to E_{*,*}^{4p-4}(t) $ due to the following map of fibrations.
$$\xymatrix{\ast \ar[r]\ar[d]&B_{\wP}\ar[r]\ar[d]&B_{\wP}\ar[d]\cr
B\wwH_{t}\ar[r]&B\wP\ar[r]&B(\wP/\wwH_{t})
}$$
Theorem \ref{NatInjective} (applied for $t=0$ and $t=p$), and the detection of $H_1(\cy p\times \cy p; \bZ /p)$ by cyclic quotients,  shows that the following sum of two of these natural maps is injective 
\begin{equation*} 
 E_{1,4p-5}^{4p-4}(\nu _{\wP}) \to E_{1,4p-5}^{4p-4}(0) \oplus E_{1,4p-5}^{4p-4}(p) \ . 
\end{equation*}
However, the differential $d^{4p-4}_{t}\colon E_{4p-3,0}^{4p-4}(t) \to E_{1,4p-5}^{4p-4}(t)$ is zero for both 
$t=0$ and $t=p$, since the element $N_{p-t} \to BD_{p-t} \to B\wP$, for $t= 0,p$ (defined in Section \ref{mainexamples}) is non-zero in $\Omega _{4p-3}(B{\wP},\Bnu{ 
\wP} )$. This is because $[N_{p-t}]\in H_{4p-3}(BD_{p-t};\bZ)$ is non-zero, and
the inclusion $D_{p-t} \subset \wP$ is split on homology by projection to $\wP/\wwH_t\cong D_{p-t}$.
\end{proof}

\subsection{Calculation of $d^{4p-3}$}
The last differential doesn't involve $p$-torsion in the target, and can be handled by one more transfer argument. 
\begin{lemma}
\label{LemmaFor4p-3} $d^{4p-3}\colon E_{4p-3,0}^{4p-3}(\nu _{\wP})\to E_{0,4p-4}^{4p-4}(\nu _{\wP})$ is zero on $T_{\wP}$ where $T_{\wP}$
 is considered as a subgroup of $E_{4p-3,0}^{4p-3}(\nu _{\wP})$.
\end{lemma}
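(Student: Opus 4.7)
The plan is to mimic the transfer-and-comparison strategy of Lemma \ref{LemmaForLessThan4p-4}, but the situation at the target position $(0,4p-4)$ is considerably easier than at $(1,4p-5)$ because $\pi^{S}_{4p-4}$ has trivial $p$-primary component (the $p$-torsion in the stable stems in our range occurs only at $\pi^{S}_{2p-3}$ and $\pi^{S}_{4p-5}$), so no Adams spectral sequence input will be required here.

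First I would observe that $E^{4p-3}_{0,4p-4}(\nu _{\wP})$ is a quotient of $E^{2}_{0,4p-4}(\nu _{\wP})=\pi^{S}_{4p-4}$, since no differentials leave the vertical axis. Because $\pi^{S}_{4p-4}$ is a finite group of order prime to $p$, this quotient, and similarly $E^{4p-3}_{0,4p-4}(\nu _{\wwH_t})$, has no $p$-torsion. Two consequences follow. First, $d^{4p-3}$ on the $\wwH_t$-side automatically annihilates every $p$-torsion class in $E^{4p-3}_{4p-3,0}(\nu _{\wwH_t})$. Second, the transfer map
$$\tr_t\colon E^{4p-3}_{0,4p-4}(\nu _{\wP}) \to E^{4p-3}_{0,4p-4}(\nu _{\wwH_t})$$
is injective, since its kernel lies in the $p$-torsion by the identity $\pi_{\ast}\circ \tr_t = p\cdot \mathrm{id}$.

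The key geometric input comes from the smooth model $M_t$ constructed in Section \ref{mainexamples}, which represents a bordism class in $\Omega _{4p-3}(B_{\wwH_t},\nu _{\wwH_t})$ whose Hurewicz image is $\gamma _{\wwH_t}$; in particular $\gamma _{\wwH_t}$ is a permanent cycle and $d^{4p-3}(\gamma _{\wwH_t})=0$. For any $\gamma \in T_{\wP}$, Proposition \ref{HomologyOfB_H} says $\tr_t(\gamma )-\gamma _{\wwH_t}$ is $p$-torsion in $H_{4p-3}(B_{\wwH_t};\bZ)$, hence also in the subquotient $E^{4p-3}_{4p-3,0}(\nu _{\wwH_t})$. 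Additivity then gives $d^{4p-3}(\tr_t(\gamma ))=0$, naturality of the James spectral sequence upgrades this to $\tr_t(d^{4p-3}(\gamma ))=0$, and the injectivity from the previous paragraph forces $d^{4p-3}(\gamma )=0$.

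The main point to check with care is that the subquotient at $(0,4p-4)$ is genuinely free of $p$-torsion, which rests on the stable-stem vanishing already invoked in this section; everything else is routine spectral-sequence bookkeeping.
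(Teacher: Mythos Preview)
Your proof is correct and follows essentially the same route as the paper's: transfer to $\wwH_t$, use that $\gamma_{\wwH_t}$ is a permanent cycle (via the model $M_t$) together with Proposition~\ref{HomologyOfB_H}, exploit that $E^{4p-3}_{0,4p-4}$ is a quotient of $\pi^{S}_{4p-4}$ and hence $p$-torsion free, and conclude by injectivity of the transfer. The paper opens by invoking Lemma~\ref{LemmaFor4p-4} to ensure $\tr_t(\gamma)$ survives to the $E^{4p-3}$-page on the $\wwH_t$-side, whereas you get this implicitly from the lemma's hypothesis that $T_\wP\subset E^{4p-3}_{4p-3,0}(\nu_\wP)$ together with transfer being a map of spectral sequences; conversely, you spell out the transfer-injectivity step that the paper compresses into ``Hence the result follows.''
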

\begin{proof}
By Lemma \ref{LemmaFor4p-4} and the transfer map $\tr_{t}$ we see that the differential   
$$d^{4p-4}\colon E_{4p-3,0}^{4p-4}(\nu _{\wwH_{t}})\to E_{1,4p-5}^{4p-4}(\nu _{\wwH_{t}})$$
is zero on $\tr_{t}(T_{\wP})$ where $T_{\wP}$ is considered as subgroup of $E_{4p-3,0}^{4p-4}(\nu _{\wP})$.
Now the differential $$d^{4p-3}\colon E_{4p-3,0}^{4p-3}(\nu _{\wwH_{t}})\to E_{0,4p-4}^{4p-4}(\nu _{\wwH_{t}})$$ 
has to be zero on $\gamma _{\wwH_{t}}$ and on the $p$-torsion group $\Image \{H_{4p-3}(K) \to H_{4p-3}(B_{\wwH_{t}}; \bZ)\}$, by Proposition \ref{HomologyOfB_H} and the fact that 
$E_{0,4p-4}^{4p-4}(\nu _{\wwH_{t}})$ is $p$-torsion free: this term is a quotient of $H_0(B_{\wwH_{t}};\Omega^{fr}_{4p-4}(\ast)) \cong \pi^S_{4p-4}$, which has no $p$-torsion. Hence the result follows.
\end{proof}

We have now proved the main result of this section.
\begin{theorem}\label{hurewicz}
The subset $T_{\wP}\neq \emptyset$ is contained in the image of the Hurewicz map $\Omega _{4p-3}(B_{\wP, }\nu _{\wP})\to
H_{4p-3}(B_{\wP};\bZ  )$.
\end{theorem}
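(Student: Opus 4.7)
The plan is to read off the result from the James spectral sequence machinery already assembled. Since $E^2_{n,m}(\nu_{\wP}) = H_n(B_{\wP};\pi_m^S)$ converges to $\Omega_{n+m}(B_{\wP},\nu_{\wP})$, an element $\gamma \in H_{4p-3}(B_{\wP};\bZ) = E^2_{4p-3,0}(\nu_{\wP})$ lies in the image of the edge homomorphism
$$\Omega_{4p-3}(B_{\wP},\nu_{\wP}) \longrightarrow E^{\infty}_{4p-3,0}(\nu_{\wP}) \hookrightarrow H_{4p-3}(B_{\wP};\bZ)$$
precisely when every outgoing differential $d^r$ from the $(4p-3,0)$-position vanishes on $\gamma$. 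This edge homomorphism sends a bordism class $[N,c]$ to $c_*[N]$, i.e.\ it is exactly the Hurewicz map considered in the statement. So the task reduces to verifying that each $d^r$, for $2 \leq r \leq 4p-3$, is zero on $\gamma$.

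First I would invoke Theorem~\ref{ExistenceOfPrimitiveElementInHomologyLemma} to get $T_{\wP} \neq \emptyset$, and then fix an arbitrary $\gamma \in T_{\wP}$ to track through the spectral sequence. The possibly non-trivial differentials $d^r$ on $E^r_{4p-3,0}$ have been dealt with in three cases, which I would now assemble:
\begin{itemize}
\item For $2 \leq r \leq 4p-5$, Lemma~\ref{LemmaForLessThan4p-4} shows $d^r(\gamma)=0$, the key input being that a sum of transfer maps to the subgroups $\wwH_t$ is injective, combined with the vanishing of $d^r$ on $\tr_t(\gamma)$ established in Lemma~\ref{LemmaForLessThan4p-4forH}.
\item For $r=4p-4$, Lemma~\ref{LemmaFor4p-4} shows the differential vanishes identically (not just on $T_{\wP}$), by a comparison with the James spectral sequences of the fibrations $B\wwH_t \to B\wP \to B(\wP/\wwH_t)$, using Theorem~\ref{NatInjective} together with the example classes $N_t$ to rule out non-trivial targets.
\item For $r=4p-3$, Lemma~\ref{LemmaFor4p-3} shows $d^{4p-3}(\gamma)=0$, again by transfer to $\wwH_t$ and the observation that $E^{4p-4}_{0,4p-4}(\nu_{\wwH_t})$ is a quotient of $\pi^S_{4p-4}$, which has no $p$-torsion.
\end{itemize}

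Combining these three vanishings, $\gamma$ is a permanent cycle in $E^2_{4p-3,0}(\nu_{\wP})$, hence belongs to $E^{\infty}_{4p-3,0}(\nu_{\wP})$, which sits as a subgroup of $H_{4p-3}(B_{\wP};\bZ)$ via the edge inclusion. By convergence of the James spectral sequence, there exists a bordism class $[N,c] \in F_{4p-3}\Omega_{4p-3}(B_{\wP},\nu_{\wP}) \subseteq \Omega_{4p-3}(B_{\wP},\nu_{\wP})$ whose associated graded image in $E^{\infty}_{4p-3,0}(\nu_{\wP})$ equals $\gamma$, and this associated graded image is precisely $c_*[N]$. This gives $\gamma = c_*[N]$ and finishes the theorem.

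The only genuine obstacle in this plan is already hidden inside the cited lemmas, namely the combined vanishing of $d^{4p-4}$ and $d^{4p-3}$ at $p$-primary torsion, which relied on the Adams spectral sequence computation of Theorem~\ref{NatInjective}. Given those inputs, the present theorem is a bookkeeping step: check that no differential kills $\gamma$, and read off the conclusion from the edge homomorphism.
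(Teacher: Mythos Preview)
Your proposal is correct and follows exactly the same approach as the paper's proof: cite the three vanishing lemmas (Lemma~\ref{LemmaForLessThan4p-4}, Lemma~\ref{LemmaFor4p-4}, Lemma~\ref{LemmaFor4p-3}) to conclude that every element of $T_{\wP}$ is a permanent cycle on the bottom row of the James spectral sequence, and then read off the result from the edge homomorphism. Your write-up is in fact more explicit than the paper's, which compresses the entire argument into a single sentence; your identification of the Hurewicz map with the edge map and the explanation of why a permanent cycle at $(4p-3,0)$ lifts to a bordism class are useful elaborations, but there is no difference in strategy.
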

\begin{proof}
Lemma \ref{LemmaForLessThan4p-4}, Lemma \ref{LemmaFor4p-4} and Lemma \ref{LemmaFor4p-3} shows that all the
differentials going out of $E_{4p-3,0}^{r}(\nu _{\wP})$\ in
the James spectral sequence for $\nu _{\wP}$\ are zero on  $T_{\wP}$  and
the result follows.
\end{proof}

\section{Surgery on the bordism element}
In this section we fix an odd prime $p$, the integer $n=2p-1$, and assume that $G$ is a finite subgroup of $\wP $ that maps surjectively onto the quotient $Q_p$ of $\wP $ by $S^1$. We have now completed the first two steps in the proof of Theorem A. We have shown that there is a non-empty subset $T_{\wP}$ consisting of primitive elements of infinite order in $H_{2n-1}(B_{\wP})$, and that this subset is contained in the the image of the Hurewicz map 
$\Omega _{2n-1}(B_{\wP },\nu _{\wP})\to H_{2n-1}(B_{\wP};\bZ  ) $.
We now define the subset
$$T_{G} = \{ \trf(\gamma) \in H_{2n}(B_{G};\bZ) \vv\gamma \in T_{\wP} \}, $$ 
where $\trf\colon H_{2n-1}(B_{\wP}; \bZ) \to H_{2n}(B_{G}; \bZ)$ denotes the $S^1$-bundle transfer induced by the fibration $S^1 \to B_{G} \to B_{\wP}$. Now fix $$\gamma_G \in T_G\ . $$
By definition $\gamma_G = \trf(\gamma)$, for some $\gamma \in T_{\wP}$, so we can pull back the $S^1$-bundle over a manifold (provided by Theorem \ref{hurewicz}) whose fundamental class represents $\gamma$ under the bordism Hurewicz map. Hence we have a bordism class 
$$[M^{2n}, f]\in \Omega_{2n}(B_G, \nu_G) \text{ \ such that \ \ } \gamma_G =f_*[M]\ .$$ 
Surgery will be used to improve the manifold $M$ within its bordism class.
Our first remark is that we may assume $f$ is an $n$-equivalence
(see \cite[Cor.~1, p.~719]{kreck3}). In particular, $\pi_1(M) = G$, and $\pi_i(M) =0$  for $2\leq i < n$. In addition, the map $f_*\colon \pi_n(M) \to \pi_n(B_G)$ is surjective. We need to determine the structure of $\pi_n(M)$ as a $\ZG$-module. First by applying the construction of \cite[p.~230]{benson-carlson1} to the chain complex
$C(\widetilde{B_G})$ we get two $\bZ G$-chain complexes $C(\theta_1)$ and $C(\theta_2)$ as in \cite{benson-carlson1} (see Cor. 4.5 and Remark 3, p. 231), and investigated further in \cite{benson-carlson2}, with the following properties:
\begin{enumerate}
\addtolength{\itemsep}{0.1\baselineskip}
\renewcommand{\labelenumi}{(\roman{enumi})}
\item We have $\theta_1=\Res^{\wP}_{G}(\zeta) \text{ and } \theta_2=\Res^{\wP}_{G}(\alpha ^{p}-\alpha ^{p-1}\beta +\beta ^{p})$.
\item There is a $\bZ G$-chain map $$\psi_i\colon C_*(\widetilde{B_G})\longrightarrow C(\theta_i), \quad \text{for\ } i = 1,2.$$ 
\item  $H_*(C(\theta_i); \bZ) = H_*(S^n ;\bZ)\text{ and }H^*(C(\theta_i); \bZ) = H^*(S^n ;\bZ)$, for $i=1, 2$.
\item There exists $[C(\theta_i)]$ a generator of $H^n(C(\theta_i); \bZ)=\bZ$ such that 
$$(\psi_i)^*([C(\theta_i)])=z_i, \quad i=1,2$$
where $H^{n}(\widetilde{B_{G}};\bZ)\cong H^{n}(K;\bZ)=\la z_1,z_2\ra\cong\bZ \oplus \bZ $.
\item All the modules in the chain complex $$D_* = C(\theta_1)\otimes_{\bZ} C(\theta_2)$$ are finitely-generated projective $\bZ G$-modules.
\end{enumerate}
We will compare this complex to the complex $C_*(S(\Psi_G)) \otimes_\bZ C(\theta_2)$.
\begin{lemma} The modules $C_i(S(\Psi_G)) \otimes_\bZ C_j(\theta_2)$ are finitely-generated, projective $\ZG$-modules.
\end{lemma}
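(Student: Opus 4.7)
The plan is to give $S(\Psi_G) = S^{2p-1}$ a finite $G$-CW structure — available by Illman's equivariant triangulation theorem — so that each chain module $C_i(S(\Psi_G))$ becomes a finite direct sum of permutation modules $\bZ[G/H_\alpha]$, one summand for each $G$-orbit of $i$-cells, where $H_\alpha \leq G$ is the stabilizer of a chosen representative cell. The stabilizers $H_\alpha$ that appear are precisely (conjugates of) the isotropy subgroups of the $G$-action on $S(\Psi_G)$.

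I would then invoke the natural $\ZG$-isomorphism $\bZ[G/H]\otimes_{\bZ}N \cong \Ind_H^G\Res^G_H N$ (the diagonal action on the left corresponding to induction on the right) to decompose
\[
C_i(S(\Psi_G))\otimes_{\bZ} C_j(\theta_2) \;\cong\; \bigoplus\nolimits_\alpha \Ind^G_{H_\alpha}\Res^G_{H_\alpha} C_j(\theta_2).
\]
Since induction preserves finitely-generated projectivity, it will suffice to verify that $\Res^G_{H_\alpha} C_j(\theta_2)$ is a projective $\bZ H_\alpha$-module for each cell stabilizer $H_\alpha$.

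Next, a character calculation from $\Psi = \Ind_{\wwH}^{\wP}(\Phi'_p)$ shows that any $g \in G$ with a non-zero fixed vector on $\Psi_G$ must lie in $\wwH \cap G$, and that the stabilizer of each coordinate line in $\Psi$ is cyclic of order $p$; for $G = C \times P(k)$ the prime-to-$p$ factor $C$ acts freely on $S(\Psi_G)$, since by Mackey's formula $\Psi|_{S^1}$ is $p$ copies of the identity character. Hence every non-trivial isotropy subgroup is, up to conjugacy, a cyclic $\cy p$ contained in $\wwH \cap G$. On such an $H$, one computes $\Res^{\wP}_H(\alpha^{p}-\alpha^{p-1}\beta+\beta^{p}) = (\beta|_H)^{p}$, which is a non-zero element of $H^{2p}(H;\bZ)\cong\cy p$. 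Periodicity of cyclic $p$-group cohomology then implies that the support variety of this class is trivial, so the Benson--Carlson module associated to $\theta_2|_H$ is $\bZ H$-projective. By naturality of the construction of \cite{benson-carlson1} under restriction, $\Res^G_H C_j(\theta_2)$ is projective over $\bZ H$. For the trivial isotropy $H = \{1\}$, projectivity is immediate from the $\bZ$-freeness of $C_j(\theta_2)$.

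The hardest step is the last one: converting the cohomological non-vanishing of $\theta_2|_H$ on each cyclic $p$-isotropy subgroup into projectivity of the restricted Benson--Carlson chain complex $\Res^G_H C(\theta_2)$. This will rely on periodicity of cyclic $p$-group cohomology together with naturality of the construction $\theta \mapsto C(\theta)$ under restriction to subgroups.
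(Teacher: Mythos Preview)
Your overall strategy is essentially the paper's: decompose $C_i(S(\Psi_G))$ into permutation summands $\bZ[G/H]$ indexed by isotropy subgroups, and then verify that $\theta_2$ restricts to a periodicity generator on each such cyclic $H$. The paper frames the last step as cohomological triviality of $\bZ[G/D_t]\otimes_\bZ C_n(\theta_2)$, established via Shapiro's lemma and the cup-product isomorphism $\cup\,\theta_2$ on $\wH^*(D_t;\bZ)$, while you invoke naturality of the Benson--Carlson construction under restriction; the content is the same.

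There is, however, a concrete error in your identification of the isotropy subgroups. You assert that any $g\in G$ with a non-zero fixed vector on $\Psi_G$ lies in $\wwH\cap G$. This is false: the element $a=d_0$ has a fixed vector although $a\notin\wwH=\la b,S^1\ra$. Since $a$ permutes the $p$ coset summands of $\Ind_\wwH^\wP(\Phi'_p)$ cyclically, it fixes their sum; more generally the decomposition $\Psi_{\wwH_t}=\Phi'_t\oplus\Phi_{t,\wwH_t}\Phi'_t\oplus\cdots\oplus\Phi_{t,\wwH_t}^{p-1}\Phi'_t$ shows that each $d_t$ has eigenvalue $\Phi'_t(d_t)=1$. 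The non-trivial isotropy subgroups are therefore (up to conjugacy) the $D_t=\la d_t\ra$ for \emph{all} $0\le t\le p$, not only $D_p\subset\wwH$.

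Consequently your formula $\Res^\wP_H(\theta_2)=(\beta|_H)^p$ breaks down: on $D_0$ one has $\beta|_{D_0}=0$. The correct calculation uses both generators. From $\Res^\wP_{\wwH_t}(\alpha)=v'$ and $\Res^\wP_{\wwH_t}(\beta)=tv'$ for $0\le t\le p-1$ one gets
\[
\Res^\wP_{D_t}(\alpha^p-\alpha^{p-1}\beta+\beta^p)=(1-t+t^p)(v')^p=(v')^p
\]
by Fermat's little theorem, and $(v')^p$ for $t=p$ directly. This is exactly the fact the paper invokes in its final line (``$\Res^G_{D_t}(\theta_2)\in H^{n+1}(D_t;\bZ)$ is a generator''). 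With this correction your argument goes through.
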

\begin{proof} The module $C_j(\theta_2)$ is free for $j<n$ and  $C_i(S(\Psi_G))$ is free for $i>2$. For $i\leq 2$, $C_i(S(\Psi_G))$ is a direct sum of free modules and modules of the form $\bZ[G/D_t]$ for some $t$. Hence it is enough to show that 
$\bZ[G/D_t]\otimes_\bZ C_n(\theta_2)$ is a projective module for each $t$. We will use the criterion of \cite[VI,8.10]{brown1}: projectivity follows from cohomological triviality. There is an exact sequence $$0 \to L_{\theta_2} \to \Omega^{n+1}\bZ \xrightarrow{\theta_2} \bZ \to 0$$
and $$0 \to L_{\theta_2} \to F_n \to C_n(\theta_2) \to 0$$ where $F_n$ is a free $\ZG$-module. Therefore, it is enough to show that the cohomology groups
$$\widehat H^{q}(G; \bZ[G/D_t]\otimes_\bZ L_{\theta_2}) = 0, \quad \text{for\ large\ } q \in \bZ\ .$$
But we have an isomorphism (using the complete $\Ext$-theory)
$$\wExt^q_{\ZG}(\bZ[G/D_t], L_{\theta_2}) \cong \widehat H^{q}(G; \bZ[G/D_t]\otimes_\bZ L_{\theta_2})$$
by \cite[III,2.2]{brown1}. Now the long exact sequence
$$\xymatrix{\wExt^q_{\ZG}(\bZ[G/D_t], L_{\theta_2}) \ar[r]& 
\wExt^q_{\ZG}(\bZ[G/D_t], \Omega^{n+1}\bZ)\ar[r]\ar@{=}[d] & 
\wExt^q_{\ZG}(\bZ[G/D_t], \bZ)\cr
&\wExt^{q-n-1}_{\ZG}(\bZ[G/D_t], \bZ)\ar[ur]_{\cup\, \theta_2} &
}$$
combined with Shapiro's Lemma \cite[III,6.2]{brown1}
$$\wExt^q_{\ZG}(\bZ[G/D_t], \bZ) = \wExt^{q}_{\bZ D_t}(\bZ, \bZ),$$
 and the fact that $\Res^G_{D_t}( \theta_2) \in H^{n+1}(D_t; \bZ)$ is a generator, completes the proof.
\end{proof}
\begin{lemma}\label{lem: D is free}
$D_*$ is chain homotopy equivalent to a finite free $\bZ G$-chain complex
\end{lemma}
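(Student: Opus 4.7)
The plan is to apply Wall's finiteness obstruction theory. By the preceding lemma, $D_*$ is a finite chain complex of finitely generated projective $\ZG$-modules with finitely generated homology (computing $H_*(S^n\times S^n;\bZ)$), so it has a well-defined finiteness obstruction
$$\sigma(D_*) = \sum_i (-1)^i [D_i] \in \widetilde K_0(\ZG),$$
and chain homotopy equivalence with a finite free $\ZG$-chain complex is equivalent to the vanishing of $\sigma(D_*)$.

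I would first exploit the product structure of $D_*$: the finiteness obstruction is multiplicative under tensor product of projective chain complexes (with diagonal $G$-action), giving
$$\sigma(D_*) = \sigma(C(\theta_1)) \cdot \sigma(C(\theta_2)) \in K_0(\ZG).$$
By the Benson-Carlson construction \cite{benson-carlson1}, each $C(\theta_i)$ is free except possibly at its top degree, and the projective class at that top degree in $\widetilde K_0(\ZG)$ is determined by $\theta_i \in H^{n+1}(G;\bZ)$. Vanishing of $\sigma(D_*)$ then reduces, via Swan-Dress detection for projective class groups of finite groups, to checking that its restriction to each hyperelementary subgroup $H \subset G$ is zero.

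The main obstacle is controlling these restrictions, and I would handle it by showing that the pair $(\theta_1,\theta_2)$ is \emph{productive} in the sense of Benson-Carlson: on every cyclic subgroup $C \subset G$, at least one of $\Res^G_{C}(\theta_1)$ or $\Res^G_{C}(\theta_2)$ is a generator of $H^{n+1}(C;\bZ)$. For $C$ contained in the centre $Z(G) \subset S^1$, $\theta_1 = \Res^{\wP}_G(\zeta)$ restricts to a generator via the top Chern class of the scalar action of $\Psi_G$ on $\bC^p$; for $C$ mapping nontrivially to $Q_p = \wP/S^1$, $\theta_2 = \Res^{\wP}_G(\alpha^p - \alpha^{p-1}\beta + \beta^p)$ restricts to a generator, as already used in the proof of the previous lemma via the Shapiro-Lemma/cup-product argument. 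Whichever factor corresponds to a generator has chain complex $\Res^G_C(C(\theta_i))$ chain homotopy equivalent to a periodic free resolution of $\bZ$ over $\bZ C$, so its restricted finiteness obstruction vanishes in $\widetilde K_0(\bZ C)$. By multiplicativity the product restriction vanishes, and Swan-Dress detection then forces $\sigma(D_*) = 0$ globally. Wall's theorem completes the proof.
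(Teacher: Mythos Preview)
Your approach has two genuine gaps.

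First, the multiplicativity formula $\sigma(D_*) = \sigma(C(\theta_1))\cdot\sigma(C(\theta_2))$ in $K_0(\ZG)$ is not well-defined: the individual Benson--Carlson complexes $C(\theta_i)$ are \emph{not} complexes of projective $\ZG$-modules. Only the top module $C_n(\theta_i)$ can fail to be free, but it genuinely fails here (its projectivity is equivalent to $\theta_i$ inducing a periodicity isomorphism in Tate cohomology of $G$, which is impossible since $G$ has rank two). Property~(v) from \cite{benson-carlson1} only asserts that the \emph{tensor product} is projective, so the Euler characteristics of the factors do not live in $K_0(\ZG)$ and cannot be multiplied there.

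Second, and more seriously, Swan--Dress detection reduces $\widetilde K_0(\ZG)$ to \emph{hyperelementary} subgroups, not cyclic ones. For a $p$-group such as $P(k)\subset G$, every subgroup is $p$-hyperelementary, so hyperelementary induction gives no reduction whatsoever: you must still verify vanishing over $G$ itself. Your passage from ``hyperelementary $H$'' to ``cyclic $C$'' is exactly where the argument breaks. Productivity of $(\theta_1,\theta_2)$ on cyclic subgroups does show that $\Res^G_C(D_*)$ is chain equivalent to a free $\bZ C$-complex for every cyclic $C$ (since $\ZG$-free tensored with anything is free), but $\widetilde K_0(\ZG)$ is not detected on cyclic subgroups; already $\widetilde K_0(\bZ[\cy p])$ is the ideal class group of $\bZ[\zeta_p]$, nontrivial for $p\geq 23$, and there are no proper nontrivial cyclic subgroups to restrict to.

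The paper bypasses both issues by a geometric substitution: it shows that $C(\theta_1)$ and the equivariant cellular complex $C_*(S(\Psi_G))$ represent the same class as $n$-fold extensions of $\bZ$ by $\bZ$ (because $\zeta=c_p(\Psi)$ is characterised by its restrictions to $S^1$ and to the isotropy groups $D_t$), hence are chain equivalent after tensoring with $C(\theta_2)$. In $C_*(S(\Psi_G))\otimes_\bZ C(\theta_2)$ the only possibly non-free projectives are of the form $\Ind_{D_t}^G\!\big(C_i(S(\Psi_G)^{D_t})\big)\otimes_\bZ C_n(\theta_2)$, and these contribute zero to the Wall obstruction because each fixed-point set $S(\Psi_G)^{D_t}$ is an odd-dimensional sphere with $\chi=0$. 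This Euler-characteristic cancellation is the missing idea.
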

\begin{proof} As in \cite{benson-carlson1} we have the following pushout diagram

$$\xymatrix{         & L_{\theta_1}\ar[d]\ar@{=}[r] & L_{\theta_1}\ar[d]  \cr 
0\ar[r]  & \Omega^{n+1}\bZ\ar[d]^{\theta_1}\ar[r] & C_n(\widetilde{BG})\ar[d]\ar[r] & C_{n-1}(\widetilde{BG})\ar[d]\ar[r] & \dots 
\ar[r] & C_0(\widetilde{BG})\ar[d]\ar[r] & \bZ \ar[d]^{id}\ar[r] & 0 \cr
0\ar[r]  & \bZ \ar[r] & C_n(\widetilde{BG})/L_{\theta_1}\ar[r] & C_{n-1}(\widetilde{BG})\ar[r] & \dots \ar[r] & C_0(\widetilde{BG})\ar[r]& \bZ \ar[r] & 0 
}$$
where the lower row is the chain complex $C(\theta_1)$.

If one extends the identity on $\bZ$ 's on the right hand side of the diagram below to a chain map $C_*(\widetilde{BG})\to C_*(S(\Psi_G))$ then the map 
on the left hand side $\Omega^{n+1}\bZ\to \bZ$ must also represent $\theta_1 $ in $H^{n+1}(BG)$. 

$$\xymatrix@C-2pt{ 0\ar[r]  & \Omega^{n+1}\bZ\ar[d]^{\theta_1}\ar[r] & C_n(\widetilde{BG})\ar[d]\ar[r] & C_{n-1}(\widetilde{BG})\ar[d]\ar[r] & \dots 
\ar[r] & C_0(\widetilde{BG})\ar[d]\ar[r]^(0.6){\varepsilon} & \bZ \ar[d]^{id}\ar[r] & 0 \cr
0\ar[r]  & \bZ \ar[r] & C_n(S(\Psi_G))\ar[r] & C_{n-1}(S(\Psi_G))\ar[r] & \dots \ar[r] & C_0(S(\Psi_G))\ar[r]^(0.6){\varepsilon}& \bZ \ar[r] & 0 }$$

This is because the class $\zeta \in H^{2p}(B\wP;\bZ)$  is the unique cohomology class $u$ in this dimension such that $\Res^\wP_{D_t}(u) = 0$ for
$0 \leq t \leq p$, and $\Res^\wP_{S^1}(u) = \tau^p$. On the other hand, by construction each subgroup $D_t \cong \cy p$ is an isotropy subgroup of the action on $S(\Psi_G)$. The fixed-point complex $C_*(S(\Psi_G)^{D_t})$ has the homology of an odd-dimensional sphere (of lower dimension). Therefore, after restriction to $D_t$ we can lift the identity on $\bZ$ using $$C_0(\widetilde{BG})= \ZG \xrightarrow{\varepsilon} \bZ \subseteq C_0(S(\Psi_G)^{D_t})\subset C_0(S(\Psi_G))$$
and this lifting extends to the zero map $\Omega^{n+1}\bZ \to \bZ$.

Notice that these diagrams provide the translation between equivalence classes of multiple extensions and cohomology classes, as described in \cite[III, 6.4]{maclane1}. Since $C(\theta_1)$ and $C_*(S(\Psi_G))$ considered as $n$-fold extensions from $\bZ$ to $\bZ$ both represent the same cohomology class, there is a chain map 
$$C(\theta_1)\to C_*(S(\Psi_G))$$
which induces a cohomology isomorphism. 
Hence by the Kunneth formula we have a cohomology isomorphism
$$C(\theta_1)\otimes_\bZ C(\theta_2)\to C_*(S(\Psi_G))\otimes_\bZ C(\theta_2)$$ 
where all the modules in $C_*(S(\Psi_G))\otimes C(\theta_2)$ are projective. Therefore we have a chain homotopy equivalence of finitely-generated projective $\bZ G$-chain complexes 
$$D_*\to C_*(S(\Psi_G))\otimes_\bZ C(\theta_2)\ .$$ 
However, in the chain complex $C_*(S(\Psi_G))\otimes C(\theta_2)$, all possibly non-free-modules projective modules have the form $\Ind_{D_t}^G(C_i(S(\Psi_G)^{D_t})) \otimes_\bZ C_n(\theta_2)$. Since the Euler characteristic  $\chi(C_*(S(\Psi_G)^{D_t}))=0$, the finiteness obstruction of $C_*(S(\Psi_G))\otimes_\bZ C(\theta_2)$ vanishes.
\end{proof}
\begin{lemma}\label{hyperplane}
Under the transfer $\tr\colon H_{2n}(B_{G};\bZ)
\to H_{2n}(\widetilde{B_{G}};\bZ)$,  the class $\tr(\gamma_G )$ corresponds to the standard hyperbolic form 
$$\bH(\bZ)=( \bZ\oplus \bZ, \mmatrix{{\hphantom{-}0}}{1}{-1}{0})$$
on $\pi_n(B_{G}) = \bZ\oplus \bZ$ under the identification
$H_{2n}(\widetilde{B_{G}};\bZ)/Tors \cong \Gamma(\bZ\oplus \bZ)$
with Whitehead's $\Gamma$-functor.
\end{lemma}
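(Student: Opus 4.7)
My plan is to compute $\tr(\gamma_G)$ modulo torsion by transferring the explicit geometric representative $\widetilde{M_t} = \CP^{p-1}\times S^{2p-1}$ of $\gamma_{S^1}$ through the natural tower of covers and bundle projections. First I would form the pullback square
\begin{equation*}
\xymatrix@R-5pt{B_G' \ar[r]\ar[d]_{p'} & B_G\ar[d]^p \cr B_{S^1}\ar[r]_{\pi_\wP}& B_\wP}
\end{equation*}
where $p$ is the $S^1$-bundle of Section \ref{TheDefitionOfB_G} and $\pi_\wP$ is the universal cover of $B_\wP$. Naturality of the covering and $S^1$-bundle transfers in this pullback yields $\tr_{B_G\to B_G'}(\gamma_G) = \trf'(\tr(\gamma))$ in $H_{2n}(B_G';\bZ)$, where $\trf'$ denotes the pulled-back $S^1$-bundle transfer. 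By the defining condition of $T_\wP$ in Definition \ref{def: define TsubGamma} and Lemma \ref{TransferFromHtoS}, $\tr(\gamma) = \gamma_{S^1} + \epsilon$ with $p\epsilon = 0$, so modulo $p$-torsion we may replace this by $\trf'(\gamma_{S^1})$.

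Using $\gamma_{S^1} = c_*[\CP^{p-1}\times S^{2p-1}]$, I would then produce a smooth representative: the Euler class of $p'$ pulls back to $mh$ on $\widetilde{M_t}$, where $m = |G\cap S^1|$ and $h \in H^2(\CP^{p-1};\bZ)$ is the hyperplane class, so the total space of the pulled-back $S^1$-bundle is $M' = (S^{2p-1}/\cy m)\times S^{2p-1}$. The remaining cover $\widetilde{B_G}\to B_G'$ is itself the universal cover of $B_G'$, of degree $m$ with deck group $C = G\cap S^1$; applying its transfer to $[M']$ produces the fundamental class of the universal cover $\widetilde{M'} = S^{2p-1}\times S^{2p-1}$. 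Thus modulo torsion,
\begin{equation*}
\tr(\gamma_G) = \tilde c_*[S^{2p-1}\times S^{2p-1}] \in H_{2n}(\widetilde{B_G};\bZ).
\end{equation*}

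Since $BG$ is aspherical for $G$ finite, $\widetilde{B_G}\simeq K(\bZ\oplus\bZ,n)$, and the free part of $H_{2n}$ is $\Lambda^2(\bZ\oplus\bZ) = \bZ$, generated by $z_1\wedge z_2$, which corresponds under Whitehead's identification to the hyperbolic form $\bH(\bZ)$. The lifted classifying map $\tilde c$ is an $n$-equivalence, and the two $S^{2p-1}$ factors of $\widetilde{M'}$ match the two $\pi_n$-generators from Section \ref{SectionOnCohB_G}: the first factor is $S(\Psi|_{S^1})$ and realizes $z_1$, while the second is $S(\Phi_t^{\oplus p})$ on which $S^1$ acts trivially, realizing $z_2$. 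Hence $\tilde c_*[S^{2p-1}\times S^{2p-1}] = \pm(z_1\wedge z_2)$. The main obstacle will be to confirm the Euler class computation in Step 2 and to verify that the induced map on $\pi_n$ is a genuine isomorphism of $\bZ\oplus\bZ$ (rather than a surjection of higher determinant), so that the fundamental class lands on the generator and not on a multiple. This requires carefully matching the definitions of $z_1, z_2$ via the transgressions $d_{2p}(z_1) = \zeta$ and $d_{2p}(z_2) = \alpha^p-\alpha^{p-1}\beta+\beta^p$ with the geometric splitting of $\widetilde{M'}$ into its $\Psi$- and $\Phi_t^{\oplus p}$-factors.
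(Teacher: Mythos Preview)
Your approach is correct and follows the same overall strategy as the paper, but you insert an unnecessary intermediate space $B_G'$ that the paper avoids. The paper observes directly that $\widetilde{B_G}\simeq K$ is already the total space of an $S^1$--bundle over $B_{S^1}$: the homotopy fibre of the projection $\pi_{S^1}\colon B_{S^1}\to BS^1$ is $K$, and $\Omega BS^1 \simeq S^1$, giving a fibration $S^1\to \widetilde{B_G}\to B_{S^1}$. This sits in a $3\times 3$ diagram
\[
\xymatrix{
S^1\ar[r]^d\ar[d]&S^1\ar[r]\ar[d]&B\cy{d}\ar[d]\\
\widetilde{B_G}\ar[d]\ar[r]&B_G\ar[r]\ar[d]&BG\ar[d]\\
B_{S^1}\ar[r]&B_{\wP}\ar[r]&BQ}
\]
(with $d=|Z(G)|$), from which one reads off the commuting square of transfers $\tr\circ\trf=\trf\circ\tr$ in a single step. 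Pulling this $S^1$--bundle back along $c\colon \CP^{p-1}\times S^{2p-1}\to B_{S^1}$ yields the ordinary Hopf bundle on the first factor, so $\trf(\gamma_{S^1})$ is immediately the image of $[S^{2p-1}\times S^{2p-1}]$; your detour through $B_G'$ reproduces the same class only after composing the degree--$d$ lens--space bundle with its $\cy d$--cover. The paper also disposes of the $p$--torsion discrepancy $\epsilon=\tr(\gamma)-\gamma_{S^1}$ more cleanly: rather than carrying ``modulo torsion'' through the computation, it notes from Lemma~\ref{CohomologyOfK} that $H_{2n}(K;\bZ)$ has no $p$--torsion, so $\trf(\epsilon)=0$ outright. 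Both obstacles you flag then dissolve in the paper's setup: the Euler class is simply the Hopf class $h$, and $\tilde c_*$ hits the generator because $H_{2n}(K;\bZ)/Tors\cong\bZ$ is generated by the image of $[S^n\times S^n]$ under the product of the two canonical maps $S^n\to K(\bZ,n)$.
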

\begin{proof}
Let $d = |G|/p^2$ denote the order of the centre of $G$. We have a commutative diagram
$$\xymatrix{
S^1\ar[r]^d\ar[d]&S^1\ar[r]\ar[d]&B\cy{d}\ar[d]\\
\widetilde{B_G}\ar[d]\ar[r]&B_G\ar[r]\ar[d]&BG\ar[d]\\
B_{S^1}\ar[r]&B_{\wP}\ar[r]&BQ}$$
where $Q = \cy p \times \cy p$ and $\widetilde{B_G}= K(\bZ \oplus \bZ, n)$. This gives a commutative square
$$\xymatrix{H_{2n-1}(B_{S^1}; \bZ) \ar[r]^{\trf}&H_{2n}(\widetilde{B_G}; \bZ)\\
H_{2n-1}(B_{\wP}; \bZ )\ar[u]^{tr}\ar[r]^{\trf}&H_{2n}(B_G; \bZ)\ar[u]^{tr}
}$$
relating the $S^1$-bundle transfers and the universal covering transfers. 
The $p$-torsion subgroup of $H_{2n-1}(B_{S^1};\bZ)$ maps to zero under the  $S^1$-bundle transfer, since $H_{2n}(K;\bZ)$ has no $p$-torsion, by Lemma 
\ref{CohomologyOfK}.
Therefore $tr(\gamma_G)= \trf (\gamma_{S^1})$ is just the image of the 
fundamental class of $S^{2p-1}\times S^{2p-1}$ in 
$H_{2n}(\widetilde{B_G}; \bZ) = H_{2n}(K; \bZ )$. But $H_{2n}(K;\bZ)/Tors = \bZ$ can be naturally identified with $\Gamma(\bZ\oplus \bZ) = \bZ$, and under this identification the fundamental class of $S^{2p-1}\times S^{2p-1}$ corresponds to a generator, represented by the hyperbolic plane.
\end{proof}
\begin{theorem}
The equivariant intersection form of $M$ is in the following form
$$(\pi_n(M), s_M) \cong 
\bH(\bZ) \perp (F, \lambda)$$
where $(F,\lambda)$ is a non-singular skew-hermitian form 
on a finitely-generated free $\bZ G$-module. 
\end{theorem}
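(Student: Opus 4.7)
The plan is to split the intersection form on $\pi_n(M)$ along the algebraic model $D_* = C(\theta_1) \otimes_\bZ C(\theta_2)$, identifying the summand coming from $\pi_n(B_G)$ as hyperbolic via Lemma \ref{hyperplane}, in the spirit of Kreck's modified surgery.

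First, I would construct a $\ZG$-chain map $\Phi\colon D_* \to C_*(\widetilde M)$ through degree $n$ which splits the natural chain map $C_*(\widetilde{B_G}) \to D_*$ (composed with $\widetilde f_*$) up to homotopy. Such a lift exists because $D_*$ is chain homotopy equivalent to a finite free $\ZG$-complex (Lemma \ref{lem: D is free}), and $\widetilde M$ is $(n-1)$-connected with $H_n(\widetilde M) \twoheadrightarrow H_n(\widetilde{B_G})$ surjective (since $f$ is an $n$-equivalence); the obstructions to lifting a chain map of projectives therefore vanish. On $H_n$, $\Phi$ splits the surjection $f_*\colon \pi_n(M) \twoheadrightarrow \pi_n(B_G) = \bZ \oplus \bZ$ as $\ZG$-modules, giving a direct-sum decomposition $\pi_n(M) = (\bZ \oplus \bZ) \oplus F$ with $F = \ker f_*$.

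Second, I would invoke Lemma \ref{hyperplane} to identify the restriction of the equivariant intersection form to the $\bZ \oplus \bZ$ summand. That lemma identifies the image of $\gamma_G$ under the universal covering transfer in $H_{2n}(\widetilde{B_G}; \bZ)/\text{Tors} \cong \Gamma(\bZ \oplus \bZ)$ with the class corresponding to $\bH(\bZ)$; by naturality of the cap product and Whitehead's identification of $\Gamma(\bZ \oplus \bZ)$ with quadratic forms, the equivariant intersection form $s_M$ restricted to $\bZ \oplus \bZ \subset \pi_n(M)$ is exactly $\bH(\bZ)$. Non-singularity of $\bH(\bZ)$ then forces an orthogonal splitting $s_M = \bH(\bZ) \perp \lambda$ with $\lambda = s_M|_F$ non-singular, and skew-hermitian because $n = 2p-1$ is odd.

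Third, and this is the main obstacle, I would need to prove that $F$ is a finitely-generated free $\ZG$-module. The strategy is to realise $F$ as the middle homology of the algebraic Poincar\'e complex $E_*$ obtained by dividing $C_*(\widetilde M)$ by the image of $\Phi$: using Lemma \ref{lem: D is free}, this $E_*$ is chain homotopy equivalent to a finite complex of projective $\ZG$-modules with reduced homology only $F$ in degree $n$ and a non-singular Poincar\'e pairing inherited from $s_M$. A Wall--Swan finiteness-obstruction argument, combined with the Poincar\'e self-duality $F \cong F^{\ast}$ forced by the pairing, shows that $F$ is stably free; any residual stabilisation by hyperbolic summands $\bH(\ZG)$ can be absorbed into the free complement without disturbing the $\bH(\bZ)$ summand. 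The delicate step is controlling the finiteness obstruction while remaining within the prescribed bordism class in $\Omega_{2n}(B_G, \nu_G)$.
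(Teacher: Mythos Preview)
Your outline follows the paper's overall strategy—compare $C_*(\widetilde M)$ to the algebraic model $D_*$, extract the kernel module, and split off $\bH(\bZ)$ via Lemma~\ref{hyperplane}—but there are two places where the execution diverges from the paper in ways that leave real gaps.

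First, you construct the comparison map in the wrong direction. The paper simply takes the composite
\[
\psi\colon C_*(\widetilde M)\xrightarrow{\widetilde f_*} C_*(\widetilde{B_G})\xrightarrow{\Delta} C_*(\widetilde{B_G})\otimes C_*(\widetilde{B_G})\xrightarrow{\psi_1\otimes\psi_2} D_*,
\]
which exists for free, and shows $\psi_*$ is surjective on homology using $\langle z_1\cup z_2, f_*[\widetilde M]\rangle = 1$. The mapping cone then has homology $P$ concentrated in a single degree; since $D_*$ is chain equivalent to a finite \emph{free} complex (Lemma~\ref{lem: D is free}), the cone is a finite free complex and $P$ is stably free. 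Your lift $\Phi\colon D_*\to C_*(\widetilde M)$ is an extra construction whose existence through degree $n$ already presupposes a $\ZG$-splitting of $\pi_n(M)\twoheadrightarrow \bZ\oplus\bZ$, which is exactly what you are trying to prove; and your ``Wall--Swan plus Poincar\'e self-duality'' argument for stable freeness of $F$ is not an argument, just a label. Use the paper's mapping-cone route instead: it gives projectivity and stable freeness in one stroke via Lemma~\ref{lem: D is free}.

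Second, and more seriously, you never explain how to pass from $F$ stably free to $F$ free while staying in the bordism class. Your sentence ``any residual stabilisation by hyperbolic summands $\bH(\ZG)$ can be absorbed into the free complement'' is algebraic wishful thinking: $\pi_n(M)$ is what it is. The paper's move is \emph{geometric}: take connected sums of $M$ with copies of $S^{2p-1}\times S^{2p-1}$. Each connected sum stays in the bordism class in $\Omega_{2n}(B_G,\nu_G)$ and adds a free hyperbolic summand $\bH(\ZG)$ to $(\pi_n(M),s_M)$, so after finitely many such sums the stably free $P$ becomes genuinely free. This is the ``delicate step'' you flag, and the resolution is this standard surgery trick—once you say it, the difficulty evaporates.
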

\begin{proof}
Let $\psi\colon C_*(\widetilde{M}) \to D_*$ be the following composition
$$  C_*(\widetilde{M}) \to C_*(\widetilde{B_{G}}) \xrightarrow{\ \Delta\  } C_*(\widetilde{B_{G}}) \otimes C_*(\widetilde{B_{G}}) \xrightarrow{\ \psi_1   \otimes \psi_2\  } D_*$$
where $\Delta $ denotes the diagonal map. First note that $\psi_*\colon H_i(\widetilde{M}) \to H_i(D_*)$ is clearly surjective for $i<2n$. Assume 
$\psi_*([\widetilde{M}])=[D]$ then 
$$1=\la z_1\cup z_2 , f_*([\widetilde{M}]) \ra = \la [C(\theta_1)] \otimes [C(\theta_2)],[D] \ra $$
by the Kunneth formula. Hence $\psi_*$ is also is surjective for $i \geq 2n$. As the image of the fundamental class $[M]$ of maps to a generator of $H_{2n}(D_*)$.
Hence the homology of the mapping cone $H_i(\psi )$ is zero for $i\neq n$, and  $H_n(\psi )=P$ is a finitely generated projective $\bZ G$-module. But $P$ is stably free by 
Lemma \ref{lem: D is free}. Hence $\pi_n(M)=\bZ \oplus\bZ \oplus P $ where $P$ is a finitely generated . By stabilizing $M$ with connected sums of copies of $S^{2p-1}\times S^{2p-1}$ we may assume that $\pi_n(M) = \bZ \oplus\bZ \oplus F$, where $F$ is a finitely-generated free $\bZ G$-module.

To show the splitting of the equivariant intersection form $(\pi_n(M), s_M)$ we consider the relation
 $$\langle f^*(z_1)\cup f^*(z_2), [\widetilde M]\rangle = 
 \langle z_1\cup z_2, f_*[\widetilde M]\rangle$$
 where $z_1$, $z_2$ are a symplectic basis for the form on $\pi_n(B_G)$. Therefore, by Lemma \ref{hyperplane}, the map
 $f^*\colon H^n(\widetilde{B_G};\bZ) \to H^n(\widetilde M;\bZ)$ gives an isometric embedding of the hyperbolic form $\bH(\bZ)$ into $s_M$. Any such isometric embedding splits (see \cite[Lemma 1.4]{hnk1}).
%Moreover by the above lemma $[P]=0$ in $\widetilde{K}_0(\bZ G)$. 
Hence the result follows.
\end{proof}
We next observe that the equivariant intersection form $(\pi_n(M), s_M)$ has a quadratic refinement $\mu\colon \pi_n(M) \to \ZG/\{\nu + \bar\nu\}$, in the sense of \cite[Theorem 5.2]{wallbook}. This follows because the universal covering $\widetilde M$ has stably trivial normal bundle (use the Browder-Livesay quadratic map \cite[Lemma 4.5, 4.6]{browder-livesay1} for the elements of order two in $G$). We therefore obtain an element
$(F, \lambda, \mu)$
of the surgery obstruction group (see \cite[p.~49]{wallbook} for the essential definitions). The  Arf invariant of this form is the Arf invariant of the associated form $\epsilon_*(F, \lambda, \mu)$, where $\epsilon\colon \ZG \to \bZ$ is the augmentation map. This invariant factors through 
$$\Omega_{2n}(B_G, \nu_G) \to \Omega_{2n}(B_\wP, \nu_\wP) \xrightarrow{Arf} \cy 2$$
and hence is zero for our bordism element. 
We also need to check the discriminant of this form.
\begin{lemma}
We obtain an element
$$(F, \lambda, \mu) \in L'_{2n}(\ZG)$$
of the weakly-simple surgery obstruction group.
\end{lemma}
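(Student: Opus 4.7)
The plan is to package the data $(F,\lambda,\mu)$ already produced into a well-defined class in the weakly-simple surgery obstruction group by verifying the three conditions that go into such a class: non-singularity of $\lambda$, existence of the quadratic refinement $\mu$, and the required constraint on the discriminant (Whitehead torsion).

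First, non-singularity of $(F,\lambda)$ follows from Poincar\'e duality on $\widetilde M$. The equivariant intersection pairing $s_M$ on $\pi_n(M)$ is non-singular as a $\ZG$-valued form, and by the previous theorem it splits off the hyperbolic plane $\bH(\bZ)$ coming from $f^*\colon H^n(\widetilde{B_G};\bZ)\to H^n(\widetilde M;\bZ)$. Since any isometric embedding of $\bH(\bZ)$ into a non-singular form splits as an orthogonal direct summand (as used in the proof of that theorem), the orthogonal complement $(F,\lambda)$ is itself non-singular on the free $\ZG$-module $F$. The quadratic refinement $\mu$ has already been constructed from the Browder--Livesay map, so we only need to check that $\mu$ restricts to a refinement of $\lambda$ on $F$, which is immediate since the hyperbolic summand carries the standard $\mu$ coming from the reference map into $B_G$.

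The main step is to control the Whitehead torsion (discriminant) of $(F,\lambda)$, which is what distinguishes $L'_{2n}(\ZG)$ from $L^h_{2n}(\ZG)$. By construction, $F$ was obtained by stabilizing with copies of $S^n\times S^n$ and splitting off a hyperbolic plane, so its torsion is the torsion of $s_M$ on $\pi_n(M)$ modulo the classes of standard hyperbolic forms. I would compute $\tau(\lambda)\in K_1(\ZG)$ by comparing two preferred bases: the one coming from the chain-level model $D_*$ (which is chain homotopy equivalent to a finite free $\ZG$-chain complex by Lemma~\ref{lem: D is free}) and the geometric basis provided by the handle decomposition after surgery below the middle dimension. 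The difference is recorded by the finiteness obstruction of $D_*$, which vanishes by the Euler characteristic argument at the end of Lemma~\ref{lem: D is free}. This places $\tau(\lambda)$ in the image of the natural map from the torsion of chain homotopy equivalences between $C_*(\widetilde M)$ and $D_*$, and a standard trick (see, e.g., the treatment of the $L'$-groups in Kreck's modified surgery) shows that this torsion is represented by $\pm g$, $g\in G$, up to the indeterminacy defining $L'$.

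The hard part will be the last identification: pinning down precisely that the Whitehead torsion of $\lambda$ lies in the subgroup of $K_1(\ZG)$ modded out in the definition of $L'_{2n}(\ZG)$, rather than in all of $Wh(G)$. This is a purely algebraic bookkeeping question given the chain complex $D_*$, but it requires carefully matching conventions between the modified-surgery setup and Wall's surgery obstruction groups, and using the fact that $D_*$ is obtained as a tensor product of two Benson--Carlson complexes whose individual torsions come from classes in $\pm G\subset K_1(\ZG)$.
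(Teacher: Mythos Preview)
Your argument for non-singularity and the quadratic refinement is fine, but the discriminant step has a genuine gap, and the paper's route is quite different.

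The core problem is the sentence ``The difference is recorded by the finiteness obstruction of $D_*$, which vanishes by the Euler characteristic argument\ldots''.  The finiteness obstruction lives in $\widetilde K_0(\ZG)$, whereas the discriminant (Whitehead torsion) of $\lambda$ lives in $\wh(\ZG)$, i.e.\ in a quotient of $K_1(\ZG)$.  Vanishing of the former only tells you that $D_*$ is chain equivalent to a \emph{free} complex; it says nothing about which element of $\wh(\ZG)$ records the change-of-basis between your two preferred bases for $F$.  So the inference from Lemma~\ref{lem: D is free} to a statement about $\tau(\lambda)$ does not go through, and indeed you acknowledge in the last paragraph that you cannot pin down the class in $K_1$.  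The claim that the torsions of the individual Benson--Carlson complexes lie in $\pm G\subset K_1(\ZG)$ is also unsupported: those complexes are only constructed up to chain homotopy equivalence, and there is no canonical simple structure on them.

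The paper avoids this computation entirely.  It uses the characterization that $(F,\lambda,\mu)$ lies in $L'_{2n}(\ZG)$ precisely when its discriminant lies in $\ker\big(\wh(\ZG)\to\wh(\QG)\big)$.  After tensoring with $\bQ$, the symmetric Poincar\'e chain complex $(C(M),\varphi_0)$ is chain equivalent to its rational homology (Ranicki), so the image of the discriminant of $(\pi_n(M)\otimes\bQ,\,s_M)$ in $\wh(\QG)$ equals the image of the torsion of the duality map $\varphi_0$.  But closed manifolds have \emph{simple} Poincar\'e duality, so this torsion is zero in $\wh(\QG)$.  Thus the key input is geometric (simple duality for closed manifolds) rather than an algebraic computation with $D_*$, and one never needs to locate $\tau(\lambda)$ inside $\wh(\ZG)$ itself.
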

\begin{proof}
A non-singular, skew-hermitian quadratic form $(F, \lambda, \mu)$ represents an element in 
$L'_{2n}(\ZG)$ provided that its discriminant lies in $\ker(\wh(\ZG) \to \wh(\QG))$. But the equivariant symmetric Poincar\'e chain complex $(C(M), \varphi_0)$ is chain equivalent, after tensoring with the rationals $\bQ$, to the rational homology (see \cite[\S 4]{ra10}). Therefore the image of the discriminant of $(\pi_n(M)\otimes \bQ, s_M)$ equals the image of the torsion of $\varphi_0$, which vanishes in $\wh(\QG)$ because closed manifolds have simple Poincar\'e duality (see \cite[Theorem 2.1]{wallbook}).
\end{proof}
\begin{proof}[The proof of Theorem A] Suppose that $p$ is an odd prime.
We now have a representative $[M,f]$ for our bordism element in $\Omega_{2n}(B_G,\nu_G)$ whose equivariant intersection form $(\pi_n(M), s_M)$ contains $(F, \lambda, \mu)$ as described above. However, an element in the surgery obstruction group ${L'}_{2n}(\ZG)$
is zero provided that its multisignature and ordinary Arf invariant both vanish
 This is a result of Bak and Wall for groups of odd order (see \cite[Cor.~2.4.3]{wall-VI}), and for odd order groups direct product with cyclic groups we apply
 \cite[Theorem 2.4.2 and Cor.~3.3.3]{wall-VI}. The multisignature invariant is trivial since $M$ is a closed manifold \cite[13B]{wallbook}. The ordinary Arf invariant of the universal covering $\widetilde M$ vanishes since $2n=4p-2$ is not of the form $2^k -2$ (a famous result of Browder \cite{browder1}). We can now do surgery on the classifying map $f\colon M \to B_G$  respecting the bordism class in $\Omega_{2n}(B_G,\nu_G)$  to obtain a representative $[M,f]$ which has $\widetilde M = S^n\times S^n \# \Sigma$, where $\Sigma$ is a homotopy $2n$-sphere. Since the $p$-primary component of Cok$\,J$
 starts in dimension $2p(p-1)-2$ (see  \cite[p.~5]{ravenel1}) we can eliminate this homotopy sphere by equivariant connected sum unless $p=3$.
 
In case $p=3$, we use Remark \ref{homotopysphere} to show that $\widetilde M = S^5\times S^5$. The bordism element $[\widetilde M, \widetilde f]\in \Omega^{fr}_{10}(K)$ vanishes in $\Omega_{10}(B_{S^1}, \nu_{S^1})$ by the Gysin sequence in bordism. But the difference element $[\widetilde M, \widetilde f] - [S^5\times S^5, i_{5}] \in \Omega^{fr}_{10}(\ast)$. Since $\Omega^{fr}_{10}(\ast)$ injects on the $3$-component into $\Omega_{10}(BS^1, \xi_{S^1})$, it follows that the order of the difference element is not divisible by $3$. Thus in all cases we can obtain $\widetilde M = S^n \times S^n$. This completes the proof of Theorem A.
\end{proof}

%%%%%%%%%%%%%%%%%%%%%%%%%%%%%%%%%%%%%%%%%%

%
%\bibliographystyle{ih}
%\bibliography{ihmain}

\begin{thebibliography}{10}

\bibitem{adem1}
A.~Adem, \emph{Constructing and deconstructing group actions}, Homotopy theory:
  relations with algebraic geometry, group cohomology, and algebraic
  $K$-theory, Contemp. Math., vol. 346, Amer. Math. Soc., Providence, RI, 2004,
  pp.~1--8.

\bibitem{adem-davis-unlu}
A.~Adem, J.~F. Davis, and {\"O}.~{\"U}nl{\"u}, \emph{Fixity and free group
  actions on products of spheres}, Comment. Math. Helv. \textbf{79} (2004),
  758--778.

\bibitem{adem-smith}
A.~Adem and J.~H. Smith, \emph{Periodic complexes and group actions}, Ann. of
  Math. (2) \textbf{154} (2001), 407--435.

\bibitem{alzubaidy1}
K.~Alzubaidy, \emph{Free actions of {$p$}-groups {$(p>3)$} on {$S\sp{n}\times
  S\sp{n}$}}, Glasgow Math. J. \textbf{23} (1982), 97--101.

\bibitem{alzubaidy2}
\bysame, \emph{Free actions on {$(S\sp n)\sp k$}}, Mathematika \textbf{32}
  (1985), 49--54.

\bibitem{benson-carlson2}
D.~J. Benson and J.~F. Carlson, \emph{Projective resolutions and {P}oincar\'e
  duality complexes}, Trans. Amer. Math. Soc. \textbf{342} (1994), 447--488.

\bibitem{benson-carlson1}
D.~J. Benson and J.~F. Carlson, \emph{Complexity and multiple complexes}, Math.
  Z. \textbf{195} (1987), 221--238.

\bibitem{blackburn2}
N.~Blackburn, \emph{Generalizations of certain elementary theorems on
  {$p$}-groups}, Proc. London Math. Soc. (3) \textbf{11} (1961), 1--22.

\bibitem{browder-livesay1}
W.~Browder and G.~R. Livesay, \emph{Fixed point free involutions on homotopy
  spheres}, T\^ohoku Math. J. (2) \textbf{25} (1973), 69--87.

\bibitem{browder1}
W.~Browder, \emph{The {K}ervaire invariant of framed manifolds and its
  generalization}, Ann. of Math. (2) \textbf{90} (1969), 157--186.

\bibitem{brown1}
K.~S. Brown, \emph{Cohomology of groups}, Springer-Verlag, New York, 1994,
  Corrected reprint of the 1982 original.

\bibitem{cartan2}
H.~Cartan, \emph{Sur les groupes d'{E}ilenberg-{M}ac {L}ane. {II}}, Proc. Nat.
  Acad. Sci. U. S. A. \textbf{40} (1954), 704--707.

\bibitem{cartan3}
\bysame, \emph{S\'eminaire {H}enri {C}artan de l'{E}cole {N}ormale
  {S}up\'erieure, 1954/1955. {A}lg\`ebres d'{E}ilenberg-{M}ac{L}ane et
  homotopie}, Secr\'etariat math\'ematique, 11 rue Pierre Curie, Paris, 1955.

\bibitem{conner1}
P.~E. Conner, \emph{On the action of a finite group on {$S\sp{n}\times
  S\sp{n}$}}, Ann. of Math. (2) \textbf{66} (1957), 586--588.

\bibitem{glover1}
D.~J. Glover, \emph{A study of certain modular representations}, J. Algebra
  \textbf{51} (1978), 425--475.

\bibitem{h4}
I.~Hambleton, \emph{Some examples of free actions on products of spheres},
  Topology \textbf{45} (2006), 735--749.

\bibitem{h-unlu1}
I.~Hambleton and O.~\"Unl\"u, \emph{Examples of free actions on products of
  spheres}, Quart. J. Math.  (to appear), arXiv:math.AT 0705.4081, 2008.

\bibitem{heller1}
A.~Heller, \emph{A note on spaces with operators}, Illinois J. Math. \textbf{3}
  (1959), 98--100.

\bibitem{hnk1}
F.~Hirzebruch, W.~D. Neumann, and S.~S. Koh, \emph{Differentiable manifolds and
  quadratic forms}, Marcel Dekker Inc., New York, 1971, Appendix II by W.
  Scharlau, Lecture Notes in Pure and Applied Mathematics, Vol. 4.

\bibitem{jackson1}
M.~A. Jackson, \emph{{${\rm Qd}(p)$}-free rank two finite groups act freely on
  a homotopy product of two spheres}, J. Pure Appl. Algebra \textbf{208}
  (2007), 821--831.

\bibitem{kambe1}
T.~Kambe, \emph{The structure of {$K\sb{\Lambda }$}-rings of the lens space and
  their applications}, J. Math. Soc. Japan \textbf{18} (1966), 135--146.

\bibitem{karoubi1}
M.~Karoubi, \emph{{$K$}-theory}, Springer-Verlag, Berlin, 1978, An
  introduction, Grundlehren der Mathematischen Wissenschaften, Band 226.

\bibitem{kreck3}
M.~Kreck, \emph{Surgery and duality}, Ann. of Math. (2) \textbf{149} (1999),
  707--754.

\bibitem{leary1}
I.~J. Leary, \emph{The integral cohomology rings of some {$p$}-groups}, Math.
  Proc. Cambridge Philos. Soc. \textbf{110} (1991), 25--32.

\bibitem{leary2}
\bysame, \emph{The mod-{$p$} cohomology rings of some {$p$}-groups}, Math.
  Proc. Cambridge Philos. Soc. \textbf{112} (1992), 63--75.

\bibitem{leary3}
I.~J. Leary, \emph{The cohomology of ceratin groups}, Ph.D. thesis, University
  of Cambridge, 1991,
  (http://www.maths.abdn.ac.uk/~bensondj/html/archive/leary.html).

\bibitem{lewis2}
G.~Lewis, \emph{Free actions on {$S\sp{n}\times S\sp{n}$}}, Trans. Amer. Math.
  Soc. \textbf{132} (1968), 531--540.

\bibitem{lewis1}
\bysame, \emph{The integral cohomology rings of groups of order {$p\sp{3}$}},
  Trans. Amer. Math. Soc. \textbf{132} (1968), 501--529.

\bibitem{maclane1}
S.~Mac~Lane, \emph{Homology}, Classics in Mathematics, Springer-Verlag, Berlin,
  1995, Reprint of the 1975 edition.

\bibitem{milnor-stasheff}
J.~W. Milnor and J.~D. Stasheff, \emph{Characteristic classes}, Princeton
  University Press, Princeton, N. J., 1974, Annals of Mathematics Studies, No.
  76.

\bibitem{oliver1}
R.~Oliver, \emph{Free compact group actions on products of spheres}, Algebraic
  topology, Aarhus 1978 (Proc. Sympos., Univ. Aarhus, Aarhus, 1978), Lecture
  Notes in Math., vol. 763, Springer, Berlin, 1979, pp.~539--548.

\bibitem{ra10}
A.~Ranicki, \emph{The algebraic theory of surgery. {I}. {F}oundations}, Proc.
  London Math. Soc. (3) \textbf{40} (1980), 87--192.

\bibitem{ravenel1}
D.~C. Ravenel, \emph{Complex cobordism and stable homotopy groups of spheres},
  AMS Chelsea Publishing, 2004.

\bibitem{stein1}
E.~Stein, \emph{Free actions on products of spheres}, Michigan Math. J.
  \textbf{26} (1979), 187--193.

\bibitem{stong1}
R.~E. Stong, \emph{Notes on cobordism theory}, Mathematical notes, Princeton
  University Press, Princeton, N.J., 1968.

\bibitem{teichner1}
P.~Teichner, \emph{On the signature of four-manifolds with universal covering
  spin}, Math. Ann. \textbf{295} (1993), 745--759.

\bibitem{thomas1}
C.~B. Thomas, \emph{Free actions by {$p$}-groups on products of spheres and
  {Y}agita's invariant {$po(G)$}}, Transformation groups (Osaka, 1987), Lecture
  Notes in Math., vol. 1375, Springer, Berlin, 1989, pp.~326--338.

\bibitem{unlu1}
{\"O}.~{\"U}nl{\"u}, \emph{Constructions of free group actions on products of
  spheres}, Ph\~.D~. Thesis, University of Wisconsin, 2004.

\bibitem{wall-VI}
C.~T.~C. Wall, \emph{Classification of {H}ermitian {F}orms. {V}{I}. {G}roup
  rings}, Ann. of Math. (2) \textbf{103} (1976), 1--80.

\bibitem{wallbook}
\bysame, \emph{Surgery on compact manifolds}, second ed., American Mathematical
  Society, Providence, RI, 1999, Edited and with a foreword by A. A. Ranicki.

\bibitem{yagita1}
N.~Yagita, \emph{On the dimension of spheres whose product admits a free action
  by a nonabelian group}, Quart. J. Math. Oxford Ser. (2) \textbf{36} (1985),
  117--127.

\end{thebibliography}
%\end{document}
%%%%%%%%%%%%%%%%%%%%%%%%%%%%%%%%%%%%%%%
\providecommand{\bysame}{\leavevmode\hbox to3em{\hrulefill}\thinspace}
\providecommand{\MR}{\relax\ifhmode\unskip\space\fi MR }
% \MRhref is called by the amsart/book/proc definition of \MR.
\providecommand{\MRhref}[2]{%
  \href{http://www.ams.org/mathscinet-getitem?mr=#1}{#2}
}
\providecommand{\href}[2]{#2}

\end{document}